\documentclass[a4paper,12pt]{article}
\usepackage[left=4cm,right=1.4cm,top=4cm,bottom=2cm,bindingoffset=0cm]{geometry}
\usepackage[cp1251]{inputenc}
\usepackage[english]{babel}
\usepackage{amsmath}
\usepackage{amsthm}
\usepackage{amssymb}
\usepackage{tikz}
\usepackage{graphicx}
\usepackage{wrapfig}

\usetikzlibrary{patterns}

\newtheorem{theorem}{Theorem}
\newtheorem{lem}{Lemma}

\def\C{\mathbb{C}}
\def\i{i}

\def\x{X} 
\def\row{\operatorname{row}}
\def\col{\operatorname{col}}
\def\Row{\operatorname{Row}}
\def\Col{\operatorname{Col}}
\def\re{\operatorname{Re}}
\def\im{\operatorname{Im}}
\def\odd{\operatorname{odd}}
\def\pr{\operatorname{pr}}
\def\w{\mathfrak{Z}}
\def\ord{\operatorname{ord}}

\def\R{\mathbb{R}}
\def\Z{\mathbb{Z}}
\def\z{\mathcal{Z}}

\def\*{\otimes}

\begin{document}

\author{{\bf E.\,V.~Shchepin } \thanks{Steklov Mathematical institute scepin@mi-ras.ru}
}

\title{On Space-Filling Surfaces}

\date{October, 2026}

\maketitle
\begin{abstract}
We construct maps from cubes to higher-dimensional cubes with the largest possible H\"older exponent.
In particular, we construct a map from the square onto the cube with H\"older exponent $2/3$, thereby solving V. I. Arnold's problem.
\end{abstract}

\paragraph{Keywords.} Sharp H\"older maps, self-similar curves, space-filling surfaces, Zigzag.

\section{Introduction}
In 1890, Giuseppe Peano \cite{Peano} constructed continuous curves that fill a square and a cube.
This result caused a worldwide sensation at the time, prompting mathematicians to reconsider the meaning of a curve and
spurring the development of the topological theory of dimension.
Following Peano, Hilbert, Sierpinski, and many others constructed curves that fill a square.
Later, self-similar curves were constructed that fill cubes of arbitrary dimension.
These curves became known as Peano curves; the most famous example is Hilbert's curve.
Initially, Peano curves were regarded mainly as counterexamples in analysis. Much attention was paid to
their total lack of differentiability.

In the computer age, Peano curves gained renewed relevance and proved useful in discrete mathematics.
Their key property for computer applications is self-similarity, which makes it possible, on the one hand, to develop
efficient algorithms for approximating these curves by polygonal lines and, on the other hand, gives them remarkable geometric
properties. Peano-type curves have also been observed in nature: DNA is packed inside a cell in a way that can be described by a Peano-type curve.
Self-similar Peano curves are not merely continuous; they are also \emph{H\"older-continuous} and
attain the largest possible H\"older exponent among maps with the same image.

A map $f$ between metric spaces is called \emph{H\"older-continuous} with H\"older exponent $h$ and H\"older constant $\kappa$ if, for all $x,x\prime\in X$,
the following inequality holds:
\begin{equation}\label{hold-cont}
 |f(x)-f(x')|\le \kappa |x-x'|^h, \end{equation}
where the distance between points is denoted by the norm of their difference.

It is easy to prove that if a map from an $n$-dimensional cube into a Euclidean space of dimension $m>n$ has a H\"older exponent
greater than the ratio of the dimensions, $\frac{n}{m}$, then its image has zero $m$-dimensional volume and
is therefore nowhere dense and has topological dimension less than $m$.

All self-similar Peano curves attain the maximal possible exponent: planar curves have H\"older exponent
$\frac{1}{2}$, while curves filling three-dimensional cubes have exponent $\frac{1}{3}$.
Numerous computer applications of Peano curves are described in the book \cite{Bader}.

This article grew out of the author's attempt to answer the following question:

\emph{Does there exist a map from a square onto a cube with H\"older exponent $\frac23$?}

V. I. Arnold asked the author this question in 1988 (see \cite{Arnold2000}, problem 1988-5).
After considering it, the author proposed to Arnold the following construction of a self-similar map
from a square to a cube, analogous to the construction of Peano curves.

First, divide the square and the cube into the same number of cells (for example, 64 square and cubic cells), and establish
a one-to-one correspondence between the cells in the two partitions such that intersecting square cells correspond to intersecting cubic
cells. This property is called \,\emph{combinatorial continuity}.
Next, subdivide the square and cubic cells according to the same rule and establish
a one-to-one, combinatorially continuous correspondence between the second-order cells in corresponding first-order cells.
Continue in the same way at every subsequent stage.

If it could be completed, this construction would solve the problem.
However, even the first step proved to be quite difficult,
and overcoming these difficulties required the use of a computer.
In \cite{Tula64}, combinatorially continuous correspondences were presented between
cells in regular partitions of a square and a cube into 64 parts.

However, the second step proved impossible. Moreover, in 2010 the author proved
(see \cite{Sch2010}) that no self-similar map from a square to a cube exists.

A new impetus for solving Arnold's problem came from the author's reading of an article by R. Stong \cite{Stong1998},
which proposed a solution to the discrete analogue of Arnold's problem:

\emph{Does there exist a H\"older-continuous bijection from the two-dimensional
integer lattice $\Z^2$ onto the three-dimensional lattice $\Z^3$ with H\"older exponent $\frac23$?}

According to Stong, M. Friedman posed this question in the early 1990s.
The author was also motivated by potential computer applications of
fractal image compression, as described in \cite{Compression}.

Stong's construction is as follows. A certain cross-shaped, self-similar subset $X$ of the two-dimensional integer lattice is selected,
with fractal dimension $\frac{3}{2}$. An integer parametrization $\Z\to X$ is then constructed
that is a H\"older-continuous map with exponent $\frac{2}{3}$.
The desired H\"older-continuous map is obtained by composing the Cartesian square of the integer parametrization  with
a suitable linear projection from four-dimensional space to three-dimensional space.

The author's solution to Arnold's problem follows the same general approach. It is based on a flat, self-similar Zigzag curve $\z$, whose Hausdorff dimension is $\frac32$,
and which is homeomorphic to a segment. Its fractal (de Rham) parametrization $[0,1]\to\z$
is a H\"older-continuous map with exponent $\frac{2}{3}$.
The Cartesian square of the Zigzag lies in four-dimensional space and is projected linearly onto a three-dimensional space,
with a three-dimensional image. This solves Arnold's problem: the image of the Zigzag square under projection
contains a geometric cube, and the surrounding three-dimensional space admits a non-expansive retraction onto that cube.
Consequently, composing the product of two Zigzag parametrizations with the linear projection and the retraction yields
a H\"older-continuous map from a square onto a three-dimensional cube with exponent $\frac{2}{3}$.

Whereas the main difficulty in Stong's approach is parametrizing the cross-shaped lattice, in the author's approach
the main difficulty is proving that the projection of the Zigzag square has dimension three.
The author first tried to prove the existence of a suitable projection in general, but this approach did not
succeed. It was therefore necessary to prove the claim for a particular projection,
namely the same projection used by Stong. The result was not merely an existence theorem,
but an explicit construction suitable for applications.

In \cite{Stong1998}, Stong not only gave a positive solution to the Friedman problem but also proposed, for every
$n>1$, a construction of a bijection from an $n$-dimensional integer lattice to an $(n+1)$-dimensional lattice with H\"older exponent $\frac{n}{n+1}$.
In general, Stong's construction follows the same pattern: a cross-shaped subset of a lattice is constructed with fractal
dimension $\frac{n+1}{n}$, and its $n$th Cartesian power is mapped bijectively onto $\Z^{n+1}$
by the remarkable \emph{Stong map} $\sigma_n\colon \C^n\to\R^{n+1}$:
 \begin{equation}\label{Stong-map}
 \sigma_n(z_1,\dots,z_n)=(\re z_1,\dots,\re z_n, \sum_{k=1}^{n} 2^{k-1} \im z_k)
 \end{equation}

Compositions of the bijections he constructed yield H\"older-continuous bijections with exponent $\frac{n}{m}$ from an $n$-dimensional lattice to an $m$-dimensional lattice for any
$m>n>1$.
Unlike maps between cubes, whose H\"older exponent cannot exceed the ratio of the dimensions,
injective maps of lattices $\Z^n\to\Z^m$ have this ratio as the smallest possible exponent.

Arnold's problem is a special case of a more general question:
\emph{Does there exist a map from an $n$-dimensional cube
to an $m$-dimensional cube with H\"older exponent $\frac{n}{m}$?}

This article gives an explicit affirmative solution to the general problem.
As in Stong's construction, the general solution is a simple modification of the two-dimensional case.
Specifically, the author's solution is based on the so-called $r$-fold Zigzag, a
self-similar planar curve of Hausdorff dimension $\frac{r+2}{r+1}$ inscribed in a unit square.
Two copies of an $r$-fold Zigzag form a curve $\w_r$, called a \emph{Squiggle}.
Its natural fractal parametrization by the unit interval, $\w_r(t)$, is H\"older-continuous with exponent $\frac{r+1}{r+2}$.

We regard the $r$-Squiggle as lying in the complex plane $\C$, with projection onto the real axis equal to
$[0,1]$ and projection onto the imaginary axis equal to $[0,2]$.
The Cartesian $(r+1)$-fold power $\w_r^{r+1}$ is bi-H\"older-homeomorphic to a fractal cube of dimension $r+1$
and Hausdorff dimension $r+2$, lying in $\C^{r+1}$. Applying the Stong map $\sigma_{r+1}$ and then projecting
$\R^{r+2}\to\R^{r+1}\times\R/2\Z$ yields a covering of the cylinder $[0,1]^{r+1}\times\R/2\Z$.
Thus, an explicit H\"older-continuous map $\alpha_n\colon [0,1]^n\to [0,1]^{n+1}$ with exponent $\frac{n}{n+1}$ is given by
 \begin{equation}\label{main-form}
 \alpha_n(t_1,\dots, t_n)=(\re\w_{n-1}(t_1),\dots, \re\w_{n-1}(t_n), \left|\sum_{k=1}^{n} 2^{k-1}\im \w_{n-1}(t_k)\bmod 2 -1\right|)
 \end{equation}
In this formula, the image of a point under $\alpha_n$ can be computed efficiently, as can a preimage of a point in the image.

The resulting theorem was announced by the author in \cite{Sch26}.
 \begin{theorem}\label{th1}
For any integers $m>n>1$, there exists a H\"older-continuous map from an $n$-dimensional cube onto an $m$-dimensional cube with exponent $\frac{n}{m}$.
 \end{theorem}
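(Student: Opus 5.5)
Theorem~\ref{th1} follows from the case $m=n+1$ by composition: if $\alpha_k\colon[0,1]^k\to[0,1]^{k+1}$ is H\"older with exponent $\frac{k}{k+1}$ and is onto, then
\[
\alpha_{m-1}\circ\cdots\circ\alpha_{n+1}\circ\alpha_n
\]
is onto $[0,1]^m$. Its H\"older exponent is the product
\[
\frac{n}{n+1}\cdot\frac{n+1}{n+2}\cdots\frac{m-1}{m}=\frac{n}{m},
\]
since composing maps with exponents $h_1,h_2$ gives a map with exponent $h_1h_2$ and constant $\kappa_2\kappa_1^{h_2}$. So the plan is to prove that the map $\alpha_n$ of formula~(\ref{main-form}) is onto $[0,1]^{n+1}$ and $\frac{n}{n+1}$-H\"older.

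The H\"older estimate is routine. Put $r=n-1$, so that $\frac{r+1}{r+2}=\frac{n}{n+1}$.

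\emph{Parametrization.} I would first check that the fractal parametrization $\w_r(t)$ is H\"older with exponent $\frac{r+1}{r+2}$. This uses self-similarity. The curve is split into $N$ pieces, each similar to the whole with ratio $\lambda$, where $N\lambda^{r+2}=1$ because the Hausdorff dimension is $\frac{r+2}{r+1}$. Parameter intervals of length $N^{-k}$ then map to sets of diameter $\asymp\lambda^k=(N^{-k})^{(r+1)/(r+2)}$. Adjacent pieces intersect, so two parameters at distance about $N^{-k}$ lie in at most two adjacent level-$k$ pieces, which gives the estimate.

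\emph{Stong map and folding.} The map $\sigma_n$ is linear, hence Lipschitz. The map $x\mapsto|x\bmod 2-1|$ is a $1$-Lipschitz tent function $\R\to[0,1]$. Together these give the H\"older bound for $\alpha_n$ with exponent $\frac{n}{n+1}$.

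Surjectivity is the heart of the proof and the step I expect to be the main obstacle. Composing with the tent map reduces it to showing that
\[
\sigma_n(\w_r^{n})+(0,\dots,0,2\Z)\supseteq[0,1]^n\times\R .
\]
In other words, for every $(x_1,\dots,x_n)\in[0,1]^n$, the set of values
\[
\sum_k 2^{k-1}y_k,\qquad y_k\in\Phi(x_k)=\{y:\ x_k+iy\in\w_r\},
\]
must meet every residue class mod $2$.

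The fibres $\Phi(x)$ are Cantor-like, of dimension $\frac1{r+1}$. I would try to use the self-similar structure of the Zigzag to show that each fibre contains a set with a digit-expansion structure, in the following sense. At each level the vertical fibre over $x$ passes through $r+1$ stacked copies of a scaled Zigzag. The offsets available at level $k$ then form an arithmetic progression of length $r+1$ with step proportional to the level-$k$ scale.

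The weights $2^{k-1}$ in $\sigma_n$ are designed so that combining the $n=r+1$ fibres interleaves these progressions into a complete digit system, in the manner of Stong's lattice argument. The sumset would then contain a full interval of length $2$, hence cover $\R/2\Z$.

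Concretely, I would prove by induction on the level that the level-$k$ approximations of the sumset cover $\R/2\Z$ up to the mesh at that level. I would then pass to the limit by compactness: the sumset is a continuous image of a compact set, and it is dense mod $2$, so it equals all of $\R/2\Z$. The delicate point is handling values of $x_k$ on boundaries between pieces, where fibres can degenerate. There I would use the fact that adjacent pieces share the boundary fibre, and choose whichever piece gives the needed digit.
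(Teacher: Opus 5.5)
Your reduction to $m=n+1$ by composing the $\alpha_k$, the H\"older estimate for $\alpha_n$, and the density-plus-compactness framework for surjectivity all agree with the paper (Theorems~\ref{th-zag} and~\ref{th-razvert}). One small correction: the dimension relation should read $N\lambda^{(r+2)/(r+1)}=1$, with $N=2^{r+2}$ and $\lambda=2^{-(r+1)}$.

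\textbf{The gap is surjectivity.} You identify it as the heart of the proof but supply only a plan, and the structural claim the plan rests on is false. A column of the $r$-Squiggle does not cross $r+1$ stacked copies whose offsets form an arithmetic progression.
A first-order column meets exactly one block box in each floor. Hence it contains one or three cells of the lower floor and three or one of the upper floor: four cells in all, with rows pairwise distinct modulo~$4$ (Lemma~\ref{stolb4}).
An order-$n$ column has $2^{n+1}$ cells, and inside a single parent a column has one or three children. So the fibre over $x$ has no level-by-level product ``digit'' structure.

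The mechanism also fails on its own terms. For $r\ge2$, if $a_k$ ranges over a progression of length $r+1\ge3$ with step $d$, then replacing $(a_1,a_2)$ by $(a_1+2d,a_2-d)$ leaves $\sum_k2^{k-1}a_k$ unchanged. Such progressions therefore never yield a complete digit system. Moreover, $r+1$ branches per level would contradict your own (correct) fibre dimension $\frac1{r+1}$ at vertical contraction $2^{r+1}$.

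\textbf{What is missing is the paper's parity-driven $2$-adic bookkeeping.}
\begin{itemize}
\item A cell's type (ascending or descending) is its parity $\odd F$, and within a fixed column this fixes the parity of $\Row F$.
\item Prescribing the parity vector $(\odd F_0,\dots,\odd F_r)$ leaves two admissible quaternary ordinates per column. That gives $2^{r+1}$ combinations per level, exactly matching the contraction factor $2^{r+1}$.
\item Lemma~\ref{arif1} then shows that $\sum_{i=0}^r2^ix_i\equiv Y\pmod{2^{r+2}}$, with $x_i$ taken from prescribed quartets with prescribed parities, has a unique solution. The reason is that the prescribed parity of $x_{i+1}$ determines the second binary digit of $x_i$.
\end{itemize}

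The real difficulty is passing between levels. Since $\Row F=\row F+2^{r+1}\Row\pr F$, the residue of $\Row F$ modulo $2^{r+2}$ depends on the parity of the parent's row. At the same time, which children are available in a column depends on the parent's type. The paper handles this coupling with three ingredients:
\begin{itemize}
\item the carry term $2^{r+1}\row_2\pr F_0$ in Lemma~\ref{transit};
\item Lemma~\ref{zag-tip}: cells of one column with equal quaternary ordinate have parents of equal parity and equal $\Row$ modulo $2^{r+2}$;
\item the induction of Lemma~\ref{frac-seq}, which solves the parent-level congruence with the parents' parities prescribed.
\end{itemize}
Your ``induction on the level'' has no counterpart to any of this, so as written it does not establish density modulo~$2$.

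Conversely, the boundary issue you flag is harmless. Lemma~\ref{frac-seq} works for any column whose base contains $x_k$, and compactness handles the limit.
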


The article also presents two alternative constructions of sharp H\"older maps $I^{r+1}\to I^{r+1}\times S^1$,
based on a fractal Cross, a compact analogue of Stong's discrete cross.
The fractal Cross and the corresponding solution to Arnold's problem were also obtained independently by Badger and Palmer \cite{Palmer}.

Like a Zigzag, an $r$-fold Cross is a self-similar compact set with Hausdorff dimension $\frac{r+2}{r+1}$.
However, unlike the Zigzag and the Squiggle, the Cross is not a Jordan curve. The fractal parametrizations $\tau_r$ of the Cross constructed by the author
have points of multiplicity four. The extremal H\"older map $I^{r+1}\to I^{r+1}\times S^1$,
constructed using an $r$-fold Cross inscribed in the unit square, is given by
 \begin{equation}\label{formula-x}
 X (t_1,\dots,t_{r+1})= (\re \tau_r(t_1),\dots,\re \tau_r(t_{r+1}),\sum_{k=1}^{r+1} 2^{k-1}\im \tau_r(t_k) \mod 1)
\end{equation}

Finally, the third construction combines a Cross and a Squiggle of odd multiplicities $2r-1$ and gives maps from squares and even-dimensional cubes onto the corresponding cylinders:

 \begin{multline*}
 Y (t_1,\dots,t_{2r})= (\re \w_{2r-1}(t_1),\re \tau_{2r-1}(t_2), \dots,
 \re \w_{2r-1}(t_{2r-1}),\re \tau_{2r-1}(t_{2r}),\\
 \sum_{k=1}^{r} 4^{k-1}\im (\w_{2r-1}(t_{2k-1})+\tau_{2r-1}(t_{2k}) ) \mod 1)
\end{multline*}

\section{Fractal Zigzag}

\subsection{Square partitions.}

\paragraph{ Segment partitions.}

A regular partition of a segment $I\subset\R$ is a covering of $I$ by equal-length subsegments with pairwise disjoint interiors.

A regular partition of $I$ into $n$ subsegments is called an $n$-partition,
denoted by $I/n$; the number $n$ is called the \emph{order} of the partition.
The subsegments of the partition $I/n$ are called its \emph{cells}.
If a segment $J$ is a cell in a regular partition of a containing segment $I$, we denote the partition by $I/J$ and say that $J$ \emph{divides} $I$.
The cells at the two ends of a partition are called \emph{extremal}.
We denote the length of a segment $I$ by $|I|$; thus, the order of the partition $I/J$ is $|I|/|J|$.

A segment is called \emph{oriented} if its endpoints are ordered.
The first endpoint is called the beginning of the segment, and the second is called its end.
The number of cells between the beginning of an oriented segment $I$ and a given cell $J$ is called the \emph{position} of $J$ on $I$.
Thus, the initial cell (the one containing the beginning of the segment) has position zero, and the final cell has position one less than the order of the partition.

\paragraph{Partitioning rectangles.}

We equip the Cartesian plane $\R^2$ with a fixed Cartesian coordinate system.
We assume that the $x$-axis is horizontal and the $y$-axis is vertical.
The distance between points $x,y\in\R^2$ is denoted by $|x-y|$.
We use two metrics on the plane: the Euclidean metric and the $l_\infty$ metric.
Since we mainly work with segments parallel to the coordinate axes, whose lengths
are the same in both metrics, the choice of metric is usually immaterial; when it matters, we specify it.

A rectangle with sides parallel to the coordinate axes is called \emph{Cartesian}.
Every Cartesian rectangle is the Cartesian product of two segments: its horizontal
(abscissa-axis) and vertical (ordinate-axis) projections.

For a Cartesian rectangle $P=I\times J$, the \emph{regular partition} of order $m\times n$ (or an $(m\times n)$-partition)
is the set $P/(m\times n)=\{F\times G\mid F\in I/m,G\in J/n\}$.
For a cell $F\in I/m$, the set $\{F\times G\mid G\in J/n\}$ is called a \emph{column}
of the partition $(I\times J)/(m\times n)$ with base $F$.
Similarly, for a cell $G\in J/n$, the set $\{F\times G\mid F\in I/m\}$ is called a \emph{row} of the partition with base $G$.
If $P$ is a cell of a regular partition of a containing Cartesian rectangle $P\prime$, we say that $P$ divides $P\prime$ and denote the partition by $P\prime/P$.
The order of this partition is uniquely determined by the ratios of the corresponding side lengths.

We denote the number of columns of $P\prime/P$ to the left of the column containing $P$ by
$\col(P,P\prime)$, or simply $\col P$ when the containing rectangle is clear from context.
This number is called the \emph{horizontal position} of $P$ in $P\prime$.
Similarly, $\row(P,P\prime)$ denotes the number of rows of $P\prime/P$ below the row containing $P$,
and is called the \emph{vertical position} of $P$ in $P\prime$.

In this article, we mainly consider \emph{Cartesian squares}, i.e., Cartesian rectangles that are squares.
A regular partition of a Cartesian square is a partition of type $n\times n$, in which all cells
are also squares.
A cell in a regular partition of a Cartesian square $Q$ is said to lie on a diagonal of $Q$
if its intersection with the diagonal is exactly a diagonal of that cell.
The diagonal from the lower-left to the upper-right corner is called the \emph{ascending diagonal}; the other diagonal is called the \emph{descending diagonal}.

\paragraph{Parity.}
A regular partition of a Cartesian square is called even or odd according to the parity of the number of cells along each side.
Thus, $Q/(n\times n)$ is even if $n$ is even and odd otherwise. In what follows,
we consider only even partitions.

We define the \emph{parity} of a cell $F$ in a regular partition of the Cartesian square $Q$ by
 \begin{equation}\label{odd-frac}
 \odd (F,Q)=(\col (F,Q)+\row (F,Q))\bmod 2
 \end{equation}
If $\odd(F,Q)=0$, the cell is called even with respect to $Q$; otherwise, it is called odd.

\begin{lem}\label{odd-even}
Every cell lying on the ascending diagonal of a Cartesian square is even in any regular partition of that square.
Every cell of an even partition that lies on the descending diagonal is odd.
\end{lem}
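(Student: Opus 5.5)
We will prove both claims by writing out what it means for a cell to lie on each diagonal in terms of its horizontal and vertical positions, and then computing the parity \eqref{odd-frac} directly.

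Set up coordinates as follows. Let $Q=I\times I'$ be a Cartesian square and consider its regular partition $Q/(n\times n)$. The cells are $F_{ij}$, where $i=\col(F_{ij},Q)$ and $j=\row(F_{ij},Q)$ with $0\le i,j\le n-1$. After an affine change of coordinates we may take $Q=[0,n]^2$ and $F_{ij}=[i,i+1]\times[j,j+1]$.

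For the first claim, take the ascending diagonal, which is the line $y=x$ restricted to $Q$. Suppose this diagonal meets $F_{ij}$ in exactly a diagonal of that cell. The points of $F_{ij}$ on the line $y=x$ form a segment of length $\sqrt2$ only if the intervals $[i,i+1]$ and $[j,j+1]$ coincide, since otherwise they overlap in at most a point. This forces $i=j$. Then $\odd(F_{ij},Q)=2i \bmod 2=0$, so the cell is even. No parity assumption on $n$ is needed here, which matches the phrase ``in any regular partition''.

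For the second claim, take the descending diagonal, the line $x+y=n$. Its intersection with $F_{ij}$ is a full diagonal of the cell exactly when $i+j+1=n$, that is, when the cell's corner $(i,j+1)$ and the opposite corner $(i+1,j)$ both lie on the line. Hence $\col+\row=i+j=n-1$. For an even partition $n$ is even, so $n-1$ is odd and $\odd(F_{ij},Q)=1$. The cell is odd, as claimed.

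The only step needing a little care is turning ``its intersection with the diagonal is exactly a diagonal of the cell'' into the index conditions $i=j$ and $i+j=n-1$. This follows from the observation that a line of slope $\pm1$ meets a unit lattice square in a segment of positive length only if it passes through two opposite corners of that square. Everything after that is arithmetic.
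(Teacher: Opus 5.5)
Your proof is correct. The paper states this lemma with no proof at all, treating it as evident, and your coordinate computation is exactly the verification it leaves implicit. A cell meets the ascending diagonal in its own diagonal iff $\col(F,Q)=\row(F,Q)$, and meets the descending diagonal in its own diagonal iff $\col(F,Q)+\row(F,Q)=n-1$. After that the parities are immediate.

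One small correction concerns your closing ``observation'': it is false for an arbitrary line of slope $\pm1$. For example, $y=x+\tfrac12$ cuts $[0,1]^2$ in a segment of positive length that passes through no corner. It does hold for lines through lattice points, such as $y=x$ and $x+y=n$.

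You do not actually need that observation. The interval argument you used for the ascending case works verbatim for the descending one. There the intersection is $\{(x,n-x)\mid x\in[i,i+1]\cap[n-j-1,n-j]\}$, which has positive length only when $i=n-j-1$.
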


 For a Cartesian square $Q$, we denote the length of its side by $|Q|$.
 \begin{lem}\label{3q}
Suppose that three Cartesian squares $Q_1\subset Q_2\subset Q_3$ satisfy $Q_1$ divides $Q_2$ and $Q_2$ divides $Q_3$. Then
$Q_1$ divides $Q_3$, and the following equalities hold:
 \begin{gather*}
  \row (Q_1,Q_3)= \row (Q_1,Q_2)+(|Q_2|/|Q_1|)\row (Q_2,Q_3) \\ \col (Q_1,Q_3)= \col (Q_1,Q_2)+(|Q_2|/|Q_1|)\col (Q_2,Q_3)
 \end{gather*}
 \end{lem}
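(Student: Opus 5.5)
We reduce everything to one dimension, since both the divisibility relation and the positions $\row$, $\col$ are defined through the horizontal and vertical projections of the squares. Write $Q_i=I_i\times J_i$. The statement that $Q_1$ divides $Q_2$ means that $I_1$ is a cell of a regular partition of $I_2$ and $J_1$ is a cell of a regular partition of $J_2$, with the same order $|Q_2|/|Q_1|$. Likewise for $Q_2\subset Q_3$. So it suffices to prove the one-dimensional claim: if $I_1$ divides $I_2$ and $I_2$ divides $I_3$, then $I_1$ divides $I_3$, and the position of $I_1$ in $I_3$ is $p_{12}+(|I_2|/|I_1|)\,p_{23}$, where positions are counted from the left endpoint. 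Applying this to the horizontal projections gives the formula for $\col$, and applying it to the vertical projections (positions counted from the bottom) gives the formula for $\row$.

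For the one-dimensional claim we use coordinates. Let $I_3=[a,a+L]$, $n=|I_3|/|I_2|$ and $k=|I_2|/|I_1|$, both positive integers. Since $I_2$ divides $I_3$ with position $p_{23}$, we have $I_2=[a+p_{23}L/n,\;a+(p_{23}+1)L/n]$. Since $I_1$ divides $I_2$ with position $p_{12}$, we have
\[
I_1=\Bigl[a+p_{23}\tfrac{L}{n}+p_{12}\tfrac{L}{nk},\;a+p_{23}\tfrac{L}{n}+(p_{12}+1)\tfrac{L}{nk}\Bigr].
\]
Put $p=p_{12}+k\,p_{23}$. The interval above is exactly $[a+pL/(nk),\,a+(p+1)L/(nk)]$, which is the cell with index $p$ of the partition $I_3/(nk)$. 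The index lies in range because $0\le p_{12}\le k-1$ and $0\le p_{23}\le n-1$ give $0\le p\le nk-1$. Hence $I_1$ divides $I_3$, the order of the partition is $nk=|I_3|/|I_1|$, and the position of $I_1$ is $p$, which is the stated formula.

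Finally we recombine the two coordinates. Both projections of $Q_1$ are cells of partitions of the projections of $Q_3$ of the same order $nk$, so $Q_1$ is a cell of $Q_3/(nk\times nk)$, and the two position formulas follow.

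There is no real obstacle here. The only care needed is to check that the orders of the horizontal and vertical partitions agree, which holds because all the sets involved are squares, and that the position indices stay in the admissible range.
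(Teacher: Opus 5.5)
Your proof is correct and complete. The paper itself gives no argument for this lemma (it is ``left to the reader''), and your reduction to the horizontal and vertical projections is exactly the routine verification intended: you write the left (resp.\ lower) endpoint of $I_1$ as $a+(p_{12}+k\,p_{23})L/(nk)$, check $0\le p_{12}+k\,p_{23}\le nk-1$, and note that the two orders agree because all three sets are squares.
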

 \begin{proof}
The proof is left to the reader.
 \end{proof}

It follows from this lemma that the parities of $Q_1$ relative to the containing squares $Q_2$ and $Q_3$ agree whenever $|Q_2|/|Q_1|$ is even.

\paragraph{Square chains.}

For any planar set $M$, let $[M]$ denote the smallest Cartesian rectangle containing it; we call this the
\emph{circumscribed rectangle} of $M$.
Thus, for two points $a,b$ in the plane, the circumscribed rectangle $[a,b]$ is the segment joining them if the points
lie on the same vertical or horizontal line; otherwise, it is a rectangle.

A square with one selected vertex is called a \emph{pointed square}, and the selected vertex is its
\emph{initial vertex}. The diagonal through the initial vertex is the \emph{main diagonal}; its endpoints are the main vertices.
The main vertex other than the initial vertex is called the terminal vertex of the pointed square.
The notation $[V,W]$ for a pointed square means that $V$ is its initial vertex and $W$ its terminal vertex.

A sequence $F_*=(F_1,\dots,F_m)$ of pointed Cartesian squares of equal size is a \emph{square chain} of length $m$ if, for every $i<m$, $F_i\cap F_{i+1}$ is a common vertex or side and the initial vertex of $F_{i+1}$ is the terminal vertex of $F_i$. The squares in the chain are called its cells.
The \emph{set} of a square chain is the union of its cells.
The Cartesian rectangle circumscribed about the set of $F_*$ is denoted by $[F_*]$.
The beginning of the chain is the beginning of its first cell, and the end is the end of its last cell.

The sequence of main diagonals of a square chain forms a broken line, called the chain's \emph{diagonal line}.
The initial and final vertices of the chain are called its \emph{extreme points}.

\paragraph{Central code.}
The segment joining the centers of two adjacent cells in a square chain lies either along the extension of the diagonal of the preceding cell or parallel to one of its sides.
We encode horizontal, vertical, and diagonal directions by the symbols $-$, $|$, and $\times$, respectively.
Thus, every square chain $F_*$ of length $m$ determines a word of length $m-1$ over the three-symbol
\emph{direction alphabet} $\{|,-,\times\}$, called the chain's \emph{central code} and denoted by $C(F_*)$.

The $i$th symbol of $C(F_*)$ specifies the direction of the segment joining the centers of $F_i$ and $F_{i+1}$.
If the chain $F_*$ has only one cell, its central code is considered \emph{empty}.
For a code $A$, we write $|A|$ for its \emph{chain length}, i.e., the length of the corresponding chain,
which is one greater than the length of the code itself.

The central code, initial point, and first cell uniquely determine the entire chain, as stated in the following lemma.
determine the entire chain by virtue of the following lemma.

\begin{lem}\label{code2chain}
Let $F_1$ be a pointed square and $C_*$ any central code. Then there is a unique square chain $F_*$ starting with $F_1$, with the same initial point as $F_1$ and central code $C_*$.
\end{lem}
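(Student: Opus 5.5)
Proceed by induction on the length of the code $C_*$. If $C_*$ is empty, the chain $(F_1)$ is the unique chain of length one starting with $F_1$. For the inductive step, write $C_*=C'_*c$, where $c\in\{|,-,\times\}$ is the last symbol. By the induction hypothesis there is a unique chain $F_*'=(F_1,\dots,F_k)$ with central code $C'_*$ starting with $F_1$. Any chain with code $C_*$ starting with $F_1$ must begin with $F'_*$, since truncating it gives a chain with code $C'_*$. It therefore suffices to show that, given a pointed square $F_k$ and a symbol $c$, there is exactly one pointed square $F_{k+1}$ of the same size such that $(F_k,F_{k+1})$ is a square chain with central code $c$.

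For this one-step claim, let $F_k$ have initial vertex $V$, terminal vertex $W$, center $O$, and side $s$. Write $W-V=(\epsilon_1 s,\epsilon_2 s)$ with $\epsilon_i=\pm1$. The chain condition forces $F_{k+1}$ to have initial vertex $W$. Since $F_{k+1}$ is a Cartesian square of side $s$ containing the vertex $W$, it is one of the four squares of side $s$ having $W$ as a corner. One of these is $F_k$ itself, and the other three meet $F_k$ in a common side or in the vertex $W$ only. Their centers are obtained from $O$ by the displacements $(\epsilon_1 s,0)$, $(0,\epsilon_2 s)$, and $(\epsilon_1 s,\epsilon_2 s)$. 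These directions are horizontal, vertical, and diagonal (the last one along the extension of the main diagonal of $F_k$), and they match the symbols $-$, $|$, $\times$ respectively. So the symbol $c$ selects exactly one of the three squares. Its initial vertex is $W$, which fixes the pointed structure, and its terminal vertex is the vertex opposite $W$. Existence and uniqueness of $F_{k+1}$ follow.

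Two points need care. First, the square equal to $F_k$ must be excluded, and this is harmless because its center displacement is zero and so corresponds to no symbol. (If the definition allowed repeated cells, the condition on intersections still rules this out, since the intersection would be the whole square rather than a vertex or side.) Second, the remark about directions in the definition of the central code must be made precise. The only real obstacle is confirming that each symbol picks out exactly one admissible position: for instance, the horizontal neighbor through $W$ is unique, namely the one on the $\epsilon_1$ side, because the other horizontal neighbor of $F_k$ does not contain $W$. Once this is checked, the induction closes and the lemma follows.
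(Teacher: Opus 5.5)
Your proof is correct and is the same argument as the paper's: you build the chain one cell at a time, observe that exactly three same-size squares other than $F_k$ have the terminal vertex $W$ as a corner, and note that each of the symbols $-$, $|$, $\times$ selects exactly one of them. Your coordinate computation of the three center displacements, and your explicit exclusion of $F_k$ itself (which the paper's count of ``three squares'' leaves implicit), simply supply details the paper omits.
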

\begin{proof}
We determine the cells of the chain successively. The first cell is fixed. Suppose that
a chain segment of length $n$ has already been uniquely determined. To choose $F_{n+1}$,
only three squares of the same size as $F_n$ can have the terminal vertex of the current chain as a vertex.
The symbol $C_n$ uniquely identifies which of these three squares to choose.
\end{proof}

\paragraph{Code addition.}
For a code $A$ and an integer $i<|A|$, let $A[i]$ denote its $i$th symbol. We also adopt the following \emph{empty-code convention}:
For any non-integer real number $x$ in the interval $[1,|A|-1]$, set
\begin{equation}\label{emptyset}
A[x]=\emptyset
 \end{equation}
For any real numbers $x,y$, let $A[x,y]$ denote the subword of $A$ consisting of symbols whose indices lie in $[x,y]$.

The \emph{sum} of codes $A+B$ is defined by concatenation. With the empty-code convention, it is given by
\begin{equation}\label{conact}
 (A+B)[x]=A[x]+B[x-|A|+1]
\end{equation}
The chain length of the sum is one less than the sum of the chain lengths: $|A+B|=|A|+|B|-1$.

\paragraph{Parity.}
A code $A$ is called \emph{even} or \emph{odd} according to whether the number of its non-diagonal symbols is even or odd.
For $i\ge1$, let $\odd(A[i])$ denote the number of non-diagonal symbols in the subword $A[1,i]$, reduced modulo $2$,
and set $\odd A=\odd A[|A|]$. Thus, the parity of a code is determined by $\odd A$. We regard $\odd$ as Boolean-valued, so its values are added modulo $2$.

\begin{lem}\label{code-odd}
For a diagonally continuous chain $(F_1,\dots,F_n)$ with central code $A$,
$\odd(F_n)=\odd(F_1)+\odd A$.
\end{lem}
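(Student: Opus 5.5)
Here $\odd(F_i)$ means the parity of $F_i$ with respect to a fixed regular partition of a Cartesian square containing the chain, so that all cells of the chain are cells of that partition. It is given by the formula~(\ref{odd-frac}). The plan is a one-step computation followed by induction along the chain.

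First I would show that for each $i<n$,
\[
\odd(F_{i+1})=\odd(F_i)+\odd(A[i]),
\]
where on the right $\odd(A[i])$ stands for $1$ if the symbol $A[i]$ is non-diagonal and $0$ if it is $\times$. Since $F_i$ and $F_{i+1}$ are cells of one regular partition, the vector joining their centers is $(\Delta\col,\Delta\row)$ times the cell side, with $\Delta\col,\Delta\row\in\{-1,0,1\}$ (the cells share a vertex or a side).
\begin{itemize}
\item If $A[i]$ is $-$, then $\Delta\row=0$ and $\Delta\col=\pm1$.
\item If $A[i]$ is $|$, then $\Delta\col=0$ and $\Delta\row=\pm1$.
\item If $A[i]$ is $\times$, the centers are joined along a diagonal direction, so $\Delta\col=\pm1$ and $\Delta\row=\pm1$.
\end{itemize}
Hence $\col+\row$ changes by an odd amount exactly when the symbol is non-diagonal and by an even amount ($0$ or $\pm2$) when it is diagonal. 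Reducing modulo $2$ gives the displayed identity.

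Next I would sum this identity over $i=1,\dots,n-1$, or equivalently induct on $n$. This telescopes to $\odd(F_n)=\odd(F_1)+\sum_{i<n}\odd(A[i])$. The sum counts the non-diagonal symbols of $A=A[1,n-1]$ modulo $2$, so it equals $\odd A$ by definition, which proves the lemma.

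The only point needing care is that the cells really are cells of one common regular partition, so that $\col$ and $\row$ change by integers in $\{-1,0,1\}$. This follows because all cells in a chain have equal size and consecutive cells share a vertex or a side, so the grid spanned by $F_1$ contains every later cell. Diagonal continuity plays no role in the parity count beyond guaranteeing adjacency, so I expect no real obstacle.
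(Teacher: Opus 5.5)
Your proof is correct and follows the paper's argument: a horizontal or vertical step between adjacent cells flips the parity, a diagonal step preserves it, and so the total parity change is the number of non-diagonal symbols modulo $2$. You make the $\Delta\col,\Delta\row$ computation and the common-grid remark explicit; just note that in the paper $\odd(A[i])$ already means the parity of the prefix $A[1,i]$, so your single-symbol use of that notation should be renamed to avoid a clash.
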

\begin{proof}
Indeed, if the segment joining the centers of adjacent cells is vertical or horizontal, then the cells have
opposite parity; if it is diagonal, they have the same parity. Hence, the number of parity changes along the chain equals the number of non-diagonal symbols in its code.
\end{proof}

\paragraph{Orthogonalization.}
The symbols for the horizontal and vertical directions are called \emph{orthogonal} to each other.
The diagonal direction is considered orthogonal to itself. For a direction $a$, its orthogonal direction is denoted by
$\perp(a)$. Thus, $\perp(|)=-$, $\perp(-)=|$, and $\perp(\times)=\times$.

For a code $A$, define the orthogonal code $\perp(A)$ symbolwise by $\perp(A[i])$.
We write $\perp^n$ for the $n$-fold iteration of $\perp$. Thus, $\perp^n$ is the identity for even $n$ and equals $\perp$ for odd $n$.
with $\perp$ for odd.

The parity of a code agrees with that of its orthogonal code; in particular, for every $i$,
\begin{equation}\label{odd-perp}
 \odd(\perp A [1,i])=\odd A[1,i]
\end{equation}

\paragraph{ Superposition of codes.}

The \emph{superposition} of central codes is defined by the following formula:
 \begin{equation}\label{supercode}
 A\circ B= \sum_{k=0}^{|A|-1}(A[k]+ \perp^{\odd A[k]} B)
 \end{equation}
If $A$ is empty, then $A\circ B=B=B\circ A$ for every code $B$.
The chain length of the superposition is the product of the chain lengths: $|A\circ B|=|A||B|$.
The $i$th symbol of the superposition is given by
\begin{equation}\label{super-i}
 (A\circ B)[i]=A[i/|B|]+\perp^{\odd A[i/|B|]}B [i \bmod |B|]
\end{equation}

\begin{lem}\label{super-perp}
For central codes $A$ and $B$, the following equality holds:
$$\perp(A\circ B) = \perp( A) \circ \perp (B)$$
\end{lem}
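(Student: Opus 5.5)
The plan is a direct symbolwise comparison. By the definition \eqref{supercode}, both sides are concatenations of the same block structure. Since $|\perp A|=|A|$ and $|\perp B|=|B|$, they have the same chain length $|A||B|$, so it suffices to compare the $i$th symbols using formula \eqref{super-i}. The three ingredients are the definition of $\perp$ acting symbolwise on codes, the parity invariance \eqref{odd-perp}, and the fact that $\perp$ is an involution on the direction alphabet that commutes with its own powers.

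First, by the symbolwise definition of $\perp$ and formula \eqref{super-i}, the left side is
\[
\perp(A\circ B)[i]=\perp\bigl(A[i/|B|]\bigr)+\perp\bigl(\perp^{\odd A[i/|B|]}B[i \bmod |B|]\bigr).
\]
Here I will remark that, under the empty-code convention \eqref{emptyset}, at most one of the two summands is nonempty for each $i$. So $\perp$ applied to the sum is the sum of the $\perp$'s, with $\perp(\emptyset)=\emptyset$. Second, the right side is, again by \eqref{super-i},
\[
(\perp A\circ\perp B)[i]=(\perp A)[i/|B|]+\perp^{\odd (\perp A)[i/|B|]}(\perp B)[i \bmod |B|].
\]
Now $(\perp A)[i/|B|]=\perp(A[i/|B|])$ by definition. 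By \eqref{odd-perp}, $\odd(\perp A)[1,k]=\odd A[1,k]$, so the exponent equals $\odd A[i/|B|]$. Finally, $\perp^{e}\circ\perp=\perp\circ\perp^{e}$, since both are powers of $\perp$. Hence the second summand becomes $\perp(\perp^{\odd A[i/|B|]}B[i\bmod |B|])$, and the two expressions coincide.

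The only point needing care is bookkeeping rather than mathematics. I must make sure that the index $i/|B|$ and the parity $\odd A[k]$ are interpreted identically on both sides. The parity prefix $A[1,k]$ in \eqref{supercode} is the same prefix that appears in \eqref{super-i}, and \eqref{odd-perp} applies to every prefix, including $k=0$, where the parity is $0$. Alternatively, I could argue blockwise from \eqref{supercode}: the $k$th block $A[k]+\perp^{\odd A[k]}B$ is mapped by $\perp$ to $\perp A[k]+\perp^{\odd A[k]}\perp B$, which is exactly the $k$th block of $\perp A\circ\perp B$ by \eqref{odd-perp}. Then $\perp$ distributes over concatenation. I would present this blockwise version as the main proof, since it avoids the index conventions.
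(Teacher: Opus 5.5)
Your proposal is correct and is essentially the paper's proof: a symbolwise comparison via \eqref{super-i}. In that comparison the symbols at indices divisible by $|B|$ are $\perp(A[i/|B|])$ on both sides, and the remaining symbols agree because $\odd(\perp A)[1,k]=\odd A[1,k]$ by \eqref{odd-perp} and powers of $\perp$ commute. Your preferred blockwise version via \eqref{supercode} is the same computation grouped by blocks, and it has the small advantage of avoiding the paper's loose conventions for $A[i/|B|]$ and $\odd A[i/|B|]$ when $i$ is not divisible by $|B|$.
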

\begin{proof}
If $i$ is divisible by $|B|$, then the $i$th symbols on both sides equal $\perp(A[i/|B|])$.

Otherwise, the $i$th symbol is
 $$\perp^{\odd(\perp A[i/|B|])} \perp (B[i \bmod |B|])$$
for the right one.
These expressions agree because $\odd(\perp A[i])=\odd A[i]$.
\end{proof}

\begin{lem}\label{code-ass}
If $\odd B=0$, then for any codes $A$ and $C$,
 $$A\circ(B\circ C)=(A\circ B)\circ C$$
\end{lem}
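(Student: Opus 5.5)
We prove the identity symbol by symbol, using the explicit formula (\ref{super-i}). Both sides have chain length $|A||B||C|$, so it suffices to compare the $i$th symbols for every index $i$. Write $i=q|B||C|+s$ with $0\le s<|B||C|$, and further $s=p|C|+r$ with $0\le r<|C|$. By the empty-code convention, each formula splits into cases according to which of $r$, $s$ vanishes. There are three: $s=0$; $s\neq0$ but $r=0$; and $r\ne 0$.

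\textbf{Left side.} By (\ref{super-i}),
$$\bigl(A\circ(B\circ C)\bigr)[i]=A[q]+\perp^{\odd A[q]}\bigl((B\circ C)[s]\bigr)\quad\text{(only one of the terms is nonempty)},$$
and $(B\circ C)[s]=B[p]+\perp^{\odd B[p]}C[r]$. Hence:
\begin{itemize}
\item in case $r\ne0$ the symbol is $\perp^{\odd A[q]+\odd B[p]}C[r]$;
\item in case $r=0$, $s\neq 0$ it is $\perp^{\odd A[q]}B[p]$;
\item in case $s=0$ it is $A[q]$.
\end{itemize}

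\textbf{Right side.} Here we use $|A\circ B|=|A||B|$, so
$$\bigl((A\circ B)\circ C\bigr)[i]=(A\circ B)[j]+\perp^{\odd (A\circ B)[j]}C[r]\qquad\text{with } j=i/|C|.$$
In the case $r=0$ we have $j=q|B|+p$, and (\ref{super-i}) gives $A[q]$ if $p=0$ and $\perp^{\odd A[q]}B[p]$ otherwise. This matches the left side, since $p=0$ together with $r=0$ is exactly $s=0$.

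\textbf{The exponent in case $r\ne0$.} Here $j$ is the integer part $q|B|+p$, and we must show
$$\odd\bigl((A\circ B)[1,q|B|+p]\bigr)=\odd A[q]+\odd B[p].$$
This parity count is the main step. The prefix of $A\circ B$ of length $q|B|+p$ consists of:
\begin{itemize}
\item the blocks $A[k]+\perp^{\odd A[k]}B$ for $k<q$;
\item followed by $A[q]$ (when $q\ge1$);
\item followed by the first $p$ symbols of $\perp^{\odd A[q]}B$.
\end{itemize}
By (\ref{odd-perp}), $\perp$ does not change the number of non-diagonal symbols. So each full copy of $\perp^{\cdot}B$ contributes $\odd B=0$, by hypothesis. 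The symbols $A[1],\dots,A[q]$ contribute $\odd A[q]$. The partial copy contributes $\odd B[p]$. Summing mod 2 gives the claim, and hence the right side in case $r\ne0$ is $\perp^{\odd A[q]+\odd B[p]}C[r]$, matching the left side.

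The only delicate point is the indexing bookkeeping: the exact offsets of the junction symbols $A[k]$ inside the concatenation, and the edge cases $q=0$ or $p=0$. In those edge cases $\odd A[0]$ and $\odd B[0]$ are taken to be $0$, and the argument above goes through unchanged.
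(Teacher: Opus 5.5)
Your proof is correct and follows the paper's route: a symbol-by-symbol comparison via \eqref{super-i}, split into three cases according to whether $i$ is divisible by $|B||C|$, by $|C|$ only, or by neither. The key step is the same in both: in the prefix of $A\circ B$ of length $\lfloor i/|C|\rfloor$, every complete (possibly orthogonalized) copy of $B$ contributes parity $\odd B=0$, leaving $\odd A[q]+\odd B[p]$. Your version handles the offsets of the junction symbols and the edge cases $q=0$, $p=0$ more carefully than the paper's sketch does.
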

\begin{proof}
If $i$ is divisible by $|B||C|$, then both sides have $i$th symbol $A[i]$.

If $i$ is divisible by $|C|$ but not by $|B||C|$,
then the $i$th symbols of $A\circ(B\circ C)$ and $(A\circ B)\circ C$ are either $B[i\bmod (|B||C|)/|C|]$ or $\perp B[i\bmod (|B||C|)/|C|]$,
depending on the parity of the corresponding prefix of $A$.

If $i$ is not divisible by $|C|$, both expressions take one of the values
$C[i\bmod |C|]$ and $\perp C[i\bmod |C|]$, depending respectively on the parity of a prefix of $A\circ B$ and
on the sum of the parities of the corresponding prefixes of $A$ and $B$.

The assumption that $B$ is even gives
$$\odd(A\circ B)[1,i/|C|]=\odd A[1, i/(|B||C|)]+ \odd(B[1,(i\bmod C)/|C|]),$$
because when computing the parity of the prefix $(A\circ B)[1,i/|C|]$, all complete copies of $B$ contribute zero parity;
only the relevant prefix of $A$ and an incomplete copy of $B$ remain.
\end{proof}

\paragraph{Superposition of chains.}
A square chain $F_*$ is called \emph{inscribed} in the pointed square $Q$ if:
\begin{itemize}
\item the initial vertex of the chain is the initial vertex of $Q$;
\item the final vertex of the chain is the final vertex of $Q$;
\item the circumscribed rectangle of the chain is $Q$.
\end{itemize}
The type of diagonal joining the initial and final vertices of a square chain is called its \emph{diagonal type}.
Square chains are called ascending or descending according to their diagonal type.

The central code $C$ of a square chain inscribed in a square is called a \emph{square code}.
Chains generated by the same central code and starting in different pointed Cartesian squares are related by a combination of homothety,
translation, and rotation.
These transformations map each chain to the corresponding chain, preserving its combinatorial structure.
Therefore, if the circumscribed rectangle of one such chain is a square, then the circumscribed rectangles of the others are also squares.

\begin{lem}\label{code-diag}
Let $A$ be the central code of the square chain $(F_0,\dots,F_m)$,
and let $B$ be a square code.
Then there is a uniquely determined chain $G_*$ of length $|B|(m+1)$ such that:
\begin{itemize}
\item the initial point of $G_*$ is the initial point of $F_*$;
\item the set of $G_*$ is contained in the set of $F_*$;
\item the central code of $G_*$ is the superposition $A\circ B$.
 \end{itemize}
\end{lem}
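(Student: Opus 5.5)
The plan is to build $G_*$ cell by cell: inside each cell $F_k$ we place a copy of the chain of $B$, and then check that these copies join into one chain whose code is $A\circ B$. Uniqueness comes for free. By Lemma~\ref{code2chain}, a chain is determined by its first cell, its initial point and its central code. The initial point of $G_*$ is prescribed, and its code is prescribed as $A\circ B$. The first cell of $G_*$ must be a cell of side $|F_0|/\sqrt{|B|}$ in the corner of $F_0$ at the initial point; more exactly, its size is fixed by the requirement that $B$ is a square code of chain length $|B|$, and it must lie inside the set of $F_*$. So the real content is existence, together with the statement that the chain generated by $A\circ B$ stays inside the set of $F_*$. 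I will read the chain length as $|B|(m+1)=|A||B|$, which is the length of $A\circ B$.

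\textbf{Step 1: one copy of $B$ in each cell.} For each $k$, let $G^{(k)}$ be the chain with code $\perp^{\odd A[k]}B$ inscribed in the pointed square $F_k$ (with the convention $\odd A[0]=0$). Since $B$ is a square code, a chain with code $B$ started at the initial vertex of a pointed square of the right size is inscribed in some square. The images of such chains under rotations, reflections and homotheties are again inscribed chains.

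The key geometric point is this. Fix the orientation of $F_0$, and inscribe $B$ in any other pointed square with the same diagonal direction. Then the code $B$ is obtained if the square is a translate of $F_0$. If the main diagonal is turned by a right angle, the square is obtained by a reflection exchanging horizontal and vertical directions, and the code becomes $\perp B$. The diagonal of $F_{k+1}$ is a translate of the diagonal of $F_k$ when $A[k+1]=\times$, and is turned by a right angle otherwise. By induction, and in the spirit of Lemma~\ref{code-odd}, the diagonal of $F_k$ agrees with that of $F_0$ exactly when $\odd A[k]=0$. Hence the code of $G^{(k)}$ is $\perp^{\odd A[k]}B$, and each $G^{(k)}$ lies in $F_k$ by inscription.

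\textbf{Step 2: gluing.} Concatenate the chains $G^{(0)},\dots,G^{(m)}$. The last cell of $G^{(k)}$ has the terminal vertex of $F_k$ as its end. The first cell of $G^{(k+1)}$ starts at the initial vertex of $F_{k+1}$, which is the same point. So consecutive cells meet at a common vertex (or side), and the concatenation is a square chain. It remains to identify the connecting symbol. The last small cell of $F_k$ is the corner cell at its terminal vertex, and the first small cell of $F_{k+1}$ is the corner cell at its initial vertex. The segment joining their centers is parallel to the segment joining the centers of $F_k$ and $F_{k+1}$, by homothety at the shared vertex. Therefore the connecting symbol is $A[k+1]$. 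The resulting code is exactly $\sum_k (A[k]+\perp^{\odd A[k]}B)$, which is formula~(\ref{supercode}).

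The main obstacle I expect is Step 1's orientation bookkeeping: the claim that a right-angle turn of the diagonal corresponds exactly to applying $\perp$ to the code, and that a translate gives the same code. The point to be careful about is the reflection across the diagonal direction swapping $-$ and $|$ while fixing $\times$. I would verify it via the three admissible successors of a cell, as in the proof of Lemma~\ref{code2chain}. The rest is straightforward verification.
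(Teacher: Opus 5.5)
Your construction is essentially the paper's. The paper builds $G_*$ inductively, moving the part of $G_*$ in $F_k$ to $F_{k+1}$ by a translation when $A[k+1]=\times$, and by a translation followed by a $90^\circ$ rotation about the center of $F_{k+1}$ otherwise. You instead place all the copies at once, indexed by the prefix parity $\odd A[k]$, and then check the connecting symbols. That gluing check is correct and makes explicit a point the paper leaves unsaid: the homothety with center the common vertex $W_k$ of $F_k$ and $F_{k+1}$ and ratio $1/n$ carries $F_k$ and $F_{k+1}$ onto the two corner cells at $W_k$, so the connecting symbol is $A[k+1]$. However, the justification of Step 1, which you yourself single out as the crux, is wrong as written, and the cell size is misidentified.

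\textbf{The reflection in Step 1.} A reflection or glide reflection that exchanges $-$ and $|$ has its axis of slope $\pm1$. It therefore sends each diagonal direction to itself or to its opposite, so it can never carry the pointed square $F_0$ onto one whose diagonal is turned by a right angle. Two isometries do carry $F_0$ onto such a square. A quarter-turn exchanges $-$ and $|$ and gives $\perp B$. A glide reflection along a horizontal or vertical axis preserves $-$ and $|$ and gives $B$. So you must use direct motions only, as the paper does: translations or half-turns when the diagonal type is kept, quarter-turns when it flips. Only with that restriction does $\odd A[k]$ count the applications of $\perp$.

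Note also that the code inside $F_k$ is not forced by geometry. Every pointed square carries inscribed chains with both codes $B$ and $\perp B$; for instance, reflecting $Q(Z^+)$ in its ascending diagonal gives an ascending chain with code $Z^-$. Step 1 is thus purely an existence statement for the code $\perp^{\odd A[k]}B$ that the definition of $\circ$ prescribes.

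\textbf{The cell size.} The cells of $G_*$ have side $|F_0|/n$, where $n$ is the number of cells along a side of the square in which a chain with code $B$ is inscribed. This is not $\sqrt{|B|}$: for $Z^\pm$, $|B|=8$ and $n=4$. Moreover, the three listed conditions do not fix this size by themselves. When $F_*$ is a single cell, a half-size chain with code $B$ at the same corner also satisfies them. So uniqueness via Lemma~\ref{code2chain} holds only once each block of $|B|$ cells is required to be inscribed in the corresponding $F_k$. Both your construction and the paper's assume this tacitly.
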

\begin{proof}
The proof proceeds by induction on the length of $F_*$, using Lemma \ref{code2chain}.
Suppose that the construction of $G_*$ has been completed over the segment $F_0,\dots,F_k$. Consider the square $F_{k+1}$.
If $A[k+1]$ is diagonal,
translating the part of $G_*$ in $F_k$ to $F_{k+1}$ gives the next segment of the chain.
If $A[k+1]$ is vertical or horizontal, then, in addition to translating $F_k$ to $F_{k+1}$,
we must rotate the corresponding chain segment by $90^\circ$ about the center of $F_{k+1}$ so as to
align its initial point with the common extreme vertex of $F_k$ and $F_{k+1}$.
 \end{proof}

We record the following consequence of the lemma.
\begin{lem}\label{sq2chain}
For any Cartesian square $Q$ and any square code $C$, there is a unique square chain inscribed in $Q$ with central code $C$ and starting at a specified vertex of $Q$.

 \end{lem}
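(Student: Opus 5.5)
The plan is to derive Lemma \ref{sq2chain} from Lemma \ref{code2chain}, using the invariance remark that precedes Lemma \ref{code-diag}: chains generated by the same central code in different pointed squares are related by similarities (homothety, translation, rotation).

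\emph{Setup.} Since $C$ is a square code, there is a reference chain $G_*$ with central code $C$ inscribed in some pointed Cartesian square $Q_0=[V_0,W_0]$. Make the chosen vertex $V$ of $Q$ initial, so $Q=[V,W]$ with $W$ the opposite vertex.

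\emph{Existence.}
\begin{enumerate}
\item Choose a similarity $\phi$ of the plane that preserves the set of axis-parallel directions and maps $Q_0$ onto $Q$ with $\phi(V_0)=V$. Build it from a homothety with ratio $|Q|/|Q_0|$, a translation, and a rotation by a multiple of $90^\circ$. If needed, add a reflection in a diagonal: the dihedral symmetries of a square act transitively on its vertices, and we only need some symmetry sending $V_0$ to $V$. Then $\phi(W_0)=W$ automatically, since the main diagonal goes to the main diagonal.
\item Check that $\phi(G_*)$ is a square chain. Each cell goes to a Cartesian square of common size, adjacency by a vertex or a side is preserved, and the terminal vertex of one cell is still the initial vertex of the next.
\item Check that the central code is unchanged. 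A diagonal step stays diagonal. A horizontal or vertical step becomes horizontal or vertical, but it may be swapped with the other one, both under a $90^\circ$ rotation and under a diagonal reflection. This is the one point needing care.
\begin{itemize}
\item If the symmetry swaps $-$ and $|$, the code becomes $\perp C$.
\item To repair this, compose with the reflection of $Q$ in its main diagonal. This fixes $V$ and $W$, fixes $Q$, and swaps $-$ with $|$ again.
\end{itemize}
So a symmetry sending $Q_0$ to $Q$, $V_0$ to $V$, and $C$ to $C$ always exists.
\item The image chain starts at $V$, ends at $W$, and its circumscribed rectangle is $\phi([G_*])=\phi(Q_0)=Q$. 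Hence it is inscribed in $Q$.
\end{enumerate}

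\emph{Uniqueness.} Let $H_*$ be any chain inscribed in $Q$ starting at $V$ with code $C$. By Lemma \ref{code2chain}, $H_*$ is determined by its first cell. The first cell has $V$ as a vertex, has the common cell size $s$, and lies in $[H_*]=Q$. Only one square of side $s$ with vertex $V$ lies inside $Q$, namely the one in the corner of $Q$ at $V$.

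The size $s$ is forced too. Similarity scales all cell sizes together, so $|Q|/s$ is an invariant of the code, equal to $|Q_0|$ divided by the cell size of $G_*$. It can also be computed directly from $C$: the width of the circumscribed rectangle in cell units is determined by the code via the construction of Lemma \ref{code2chain}. The initial vertex of the first cell is $V$, so the first cell is fixed, and Lemma \ref{code2chain} gives $H_*$.

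\emph{Expected obstacle.} The main subtlety is the orientation issue in existence step 3. A rotation may turn the code into $\perp C$, and the diagonal reflection fixing $V$ and $W$ is what repairs it. The rest is routine.
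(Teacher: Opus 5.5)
Your proof is correct, but it does not follow the paper's route. The paper obtains this lemma in one line as the degenerate case of Lemma~\ref{code-diag}: take $F_*$ to be the single pointed cell $Q$, so $A$ is empty and $A\circ C=C$. You work directly from the definition of a square code instead. For existence you transport a reference inscribed chain by a similarity that preserves the grid. For uniqueness you use Lemma~\ref{code2chain} once the corner cell and its size are pinned down.

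Your version makes explicit two points the paper's reduction glosses over.
\begin{itemize}
\item \emph{Orientation.} The paper's remark that chains with the same code are related by ``homothety, translation, and rotation'' is not quite enough, because a $90^\circ$ rotation turns $C$ into $\perp C$. To reach an adjacent corner with the code $C$ itself you generally need a reflection in an axis-parallel line. That is exactly what your diagonal-reflection correction produces.
\item \emph{Uniqueness.} Lemma~\ref{code-diag} only requires the set of $G_*$ to lie in the set of $F_*$. A copy of the inscribed chain shrunk by a homothety centered at $V$ also meets that requirement, so the cell size has to be forced separately, as you do.
\end{itemize}
In addition, the inductive proof of Lemma~\ref{code-diag} takes the chain inside $F_0$ for granted, and that is essentially the present lemma. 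Your argument supplies this base step. The paper's route buys brevity and places the lemma inside the superposition framework used later.

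One place to tighten. The sentence ``similarity scales all cell sizes together'' only describes the chain you constructed, not an arbitrary $H_*$. The real reason $s$ is forced is your next sentence, which can be made precise as follows. The construction of Lemma~\ref{code2chain} commutes with homotheties centered at $V$. Hence a chain with code $C$ starting in the corner cell of side $s$ has circumscribed width equal to $s$ times a constant that depends only on $C$. The condition $[H_*]=Q$ then fixes $s$.
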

 \begin{proof}
Indeed, it suffices to apply Lemma \ref{code-diag} to the case in which $F_*$ consists of a single cell.

 \end{proof}

The chain code $C$ is called \emph{symmetric} if $C[i]=C[|C|-i]$ for every $i<|C|$.

For a pointed Cartesian square $Q$ and a square central code $C$, let $Q(C)$ denote
the set of the square chain inscribed in $Q$.
If $C$ is symmetric, the sets of chains with code $C$ starting at opposite vertices of $Q$
coincide. Thus, the notation $Q(C)$ is unambiguous for any Cartesian square with the specified main diagonal,
without specifying which main vertex is the initial one.

 \subsection{ Zigzag}

\paragraph{Zigzag Prototype.}
The codes $Z^+=(--|\times|--)$ and $Z^-=(||-\times-||)$ are called the horizontal and vertical
Zigzag codes, respectively. They are symmetric, mutually orthogonal, and square codes.
They are realized by square chains in the $4\times4$ partition of a Cartesian square $Q$.
The ascending chain with the horizontal code and the descending chain with the vertical code
can be visualized using the following matrices: $$\begin{matrix}
  x & 5 & 6 & 7 \\
  x & 4 & x & x \\
  x & x & 3 & x \\
  0 & 1 & 2 & x
 \end{matrix} \qquad
\begin{matrix}
 0 & x & x & x \\
 1 & x & 4 & 5 \\
 2 & 3 & x & 6 \\
 x & x & x & 7
\end{matrix} $$

The cells of each chain are numbered according to their positions in the square partition. The set of the chain on the left is called the ascending Zigzag prototype, and the set on the right is the descending Zigzag prototype.
In what follows, we do not consider ascending chains with a vertical Zigzag code or descending chains with a horizontal code.
We use $Q(Z^+)$ to denote the ascending chain with central code $Z^+$ inscribed in $Q$.
There are two such chains: one starts at the lower-left corner and the other at the upper-right corner. The sets of these
chains coincide, but their orders are reversed.
Similarly, $Q(Z^-)$ denotes either descending chain with vertical Zigzag code.

Reversing the order of a chain preserves the relation ``lies between: if a cell lies between two others in one order,
it also lies between them in the reverse order.
The number of cells between two cells in the chain, counted along the chain, is called their \emph{chain distance}.
This notion allows us to introduce several concepts without specifying the initial point or diagonal type of $Q(Z^\pm)$.
Two cells are \emph{neighbors} if their chain distance is one. The cells are called \emph{extreme} if
all other cells lie between them, and \emph{central} if their chain distances from the extreme cells differ
by at most one.
 \begin{lem}\label{proto1}
 If the non-adjacent cells of the Zigzag prototype intersect, then they have the same parity.
 \end{lem}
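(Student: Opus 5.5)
The statement is a finite check on two explicit $4\times4$ configurations, so I will verify it directly: list, for each prototype, all pairs of cells whose chain distance is at least two and which nevertheless share a vertex or a side, and compare their parities $\odd(F,Q)$ defined in (\ref{odd-frac}). I record a cell by its pair $(\col,\row)$ in the $4\times4$ partition of $Q$.

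\textbf{Ascending prototype.} Reading off the left matrix, the cells $0,\dots,7$ sit at
$$(0,0),\ (1,0),\ (2,0),\ (2,1),\ (1,2),\ (1,3),\ (2,3),\ (3,3).$$
Two cells of the same size intersect exactly when both their column indices and their row indices differ by at most one. I will run through all non-adjacent pairs and keep only those satisfying this condition. The only survivors should be the pair $1,3$, at $(1,0)$ and $(2,1)$, and the pair $4,6$, at $(1,2)$ and $(2,3)$. Both are corner contacts. In each pair both cells are odd, since $1+0$, $2+1$, $1+2$ and $2+3$ are all odd. The pair $3,4$ also meets only at a corner, but it is adjacent in the chain (the diagonal step $\times$) and is therefore excluded by the hypothesis.

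\textbf{Descending prototype.} I could repeat the same enumeration on the right matrix. That matrix lists the cells
$$(0,3),\ (0,2),\ (0,1),\ (1,1),\ (2,2),\ (3,2),\ (3,1),\ (3,0),$$
and the intersecting non-adjacent pairs are $1,3$ and $4,6$, all four cells being even. Alternatively, the descending prototype is the image of the ascending one under the rotation $(c,r)\mapsto(r,3-c)$ of the partition, possibly composed with a reflection, which turns $Z^+$ into $Z^-=\perp Z^+$. Under this rotation the parity $c+r$ changes by $3-2c$, which is odd. So the rotation flips the parity of every cell simultaneously (this uses that the partition is even), and equality of parities within a pair is preserved. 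Either route finishes the proof.

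There is no real obstacle here. The only step needing care is the exhaustive enumeration of intersecting non-adjacent pairs, where corner contacts are easy to miss. I would organize it by fixing each cell and scanning the at most eight neighbouring positions in the grid.
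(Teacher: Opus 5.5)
Your proof is correct and is essentially the paper's argument: the paper also just inspects the two $4\times4$ matrices and notes that the only intersecting non-adjacent pairs consist of a central cell and a cell next to an extreme cell. These are exactly your pairs $1,3$ and $4,6$, all odd in the ascending prototype and all even in the descending one. Your rotation shortcut for the descending case is also valid, since $(c,r)\mapsto(r,3-c)$ carries the ascending chain onto the descending one cell by cell, with no reflection needed, and flips every parity.
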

 \begin{proof}
Indeed, the matrices representing the Zigzag codes show that a pair of intersecting, non-adjacent cells
can consist only of a central cell and a cell adjacent to an extreme cell.
 \end{proof}
 \begin{lem}\label{proto2}
Let $F$ and $G$ be cells of the partition $Q/(2\times2)$ of a Cartesian square $Q$,
lying on the same diagonal, and let $Z$ be a Zigzag code such that the common vertex of $F$ and $G$ does not belong to either chain set
$F(Z)$ or $G(Z)$.
Then the distance between these two sets is at least one quarter of the side length of $Q$.
 \end{lem}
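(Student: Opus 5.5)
The plan is a direct coordinate check, after using the symmetries of the configuration to reduce to a single case. Throughout we measure in units of $\delta=|Q|/8$, the side of the cells of the partitions $F/(4\times4)$ and $G/(4\times4)$, so the claim becomes that the two chain sets are at distance at least $2\delta$. We work in the $l_\infty$ metric; this suffices because the Euclidean distance is at least the $l_\infty$ distance.

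\emph{Reduction.} Reflections of $Q$ in the lines through its center parallel to its sides permute the two diagonals. They map Zigzag chains to Zigzag chains (interchanging $Z^+$ and $Z^-$) and preserve distances. So we may assume $F$ is the lower-left cell and $G$ the upper-right cell of $Q/(2\times2)$, and that their common vertex $O$ is the center of $Q$.

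\emph{Which Zigzag occurs.} The point $O$ is the upper-right vertex of $F$, hence lies on the ascending diagonal of $F$. An ascending chain inscribed in $F$ has $O$ as an extreme point, so $O\in F(Z^+)$. The hypothesis $O\notin F(Z)$ therefore forces the descending chain, $Z=Z^-$, whose set is the descending prototype. The same argument applies to $G$, where $O$ is the lower-left vertex.

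\emph{Relating the two sets.} Since $Z^-$ is symmetric, $F(Z^-)$ is invariant under the half-turn about the center of $F$. The point reflection of $Q$ in $O$ is a translation composed with that half-turn, and it maps $F$ onto $G$ while preserving descending diagonals. Hence $G(Z^-)$ is the image of $F(Z^-)$ under reflection in $O$.

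\emph{Distance computation.} Place coordinates with $O=(4,4)$ and $F=[0,4]^2$. From the matrix of the descending prototype, the cells of $F(Z^-)$, written as (column, row) with the lower-left corner of each cell given, are
\[
(0,3),(0,2),(0,1),(1,1),(2,2),(3,2),(3,1),(3,0).
\]
Reflecting in $O$ sends a cell with lower-left corner $(c,r)$ to the cell with lower-left corner $(7-c,7-r)$. We then check every pair of cells, one from each set, and show their $l_\infty$ gap is at least $2$. Cells of $F(Z^-)$ have $y\le3$, except the cell $[0,1]\times[3,4]$. Cells of $G(Z^-)$ have $y\ge5$, except the reflected cell $[7,8]\times[4,5]$. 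So every pair has a vertical gap $\ge2$, except pairs involving one of these two exceptional cells, and for those the horizontal gap is $\ge2$ (for instance $[0,1]$ versus $x\ge4$). This yields distance $\ge2\delta=|Q|/4$.

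The only point needing care is this last step: checking the exceptional cells near the corners so that no pair is closer than $2\delta$. It is a finite check read off the prototype matrix.
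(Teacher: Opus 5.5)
\textbf{The reduction step fails as written; the rest is correct.} A reflection of $Q$ in a line through its center parallel to a side keeps horizontal steps horizontal and vertical steps vertical. So it preserves the central code symbol by symbol and only reverses the diagonal type. It therefore does \emph{not} interchange $Z^+$ and $Z^-$. It carries an ascending chain with code $Z^+=(--|\times|--)$ to a descending chain with the same horizontal code. In the $4\times4$ partition that chain occupies positions $(3,0),(2,0),(1,0),(1,1),(2,2),(2,3),(1,3),(0,3)$, not the $Z^-$ positions you list.

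Concretely, take $F$ and $G$ on the descending diagonal of $Q$. The common vertex then lies on the descending diagonal of both cells, so the hypothesis forces $Z=Z^+$. Your reflection turns this into a configuration on the ascending diagonal built from descending horizontal-code chains. That is not the configuration you computed, and your step ``$O\in F(Z^+)$ forces $Z=Z^-$'' does not apply to it.

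The repair is to use the rotation of $Q$ by $90^\circ$ about its center instead. This rotation:
\begin{itemize}
\item permutes the cells of $Q/(2\times2)$ and fixes the common vertex $O$;
\item swaps the two diagonals of $Q$;
\item exchanges horizontal with vertical directions (applying $\perp$ to the code) and at the same time exchanges the diagonal types.
\end{itemize}
Hence it maps $Z^+$-chains onto $Z^-$-chains and vice versa. Alternatively, you can simply repeat your finite check for the descending-diagonal configuration.

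With that replacement your argument is complete, and it is essentially the paper's proof made explicit. The paper also assumes $F$ and $G$ lie on the ascending diagonal. It observes that the corner cells of $F/(4\times4)$ and $G/(4\times4)$ at the common vertex are absent from the chains, so both chains are descending with vertical code; this is your identification $Z=Z^-$. It then only notes that the cell to the left of the corner cell of $F$ and the cell to the right of the corner cell of $G$ are absent too.

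Your use of the half-turn to obtain $G(Z^-)$ as the reflection of $F(Z^-)$ in $O$, together with the pairwise check via the two exceptional cells, supplies the verification the paper leaves implicit. Your cell lists and gap estimates are correct. The bound is attained, for instance by $[3,4]\times[2,3]$ and $[4,5]\times[5,6]$ in units of $|Q|/8$, so your check has no slack to spare.
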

 \begin{proof}
Assume that $F$ and $G$ lie on the ascending diagonal of $Q$, with $F$ below and to the left of $G$.
Let $F\prime$ and $G\prime$ be the cells in the $4\times4$ partitions of $F$ and $G$, respectively, that intersect each other.
Then neither $F\prime$ nor $G\prime$ belongs to its corresponding Zigzag chain, and both are descending cells.
Thus, they have vertical code. The cell $G\prime\prime$ in $G/(4\times4)$ immediately to the right of $G\prime$
does not belong to $G(Z)$, and the cell $F\prime\prime$ immediately to the left of $F\prime$
does not belong to $F(Z)$. This proves the required inequality.
 \end{proof}
\paragraph{Building a Zigzag.}

We denote the $n$-fold superpositions of the horizontal and vertical Zigzag codes by $Z^{+n}$ and $Z^{-n}$, respectively.
These codes are also symmetric and mutually orthogonal.

For any Cartesian square $Q$, define the $n$-fold prototype of the ascending (respectively, descending) Zigzag
as $Q(Z^{+n})$ (respectively, $Q(Z^{-n})$),
the set of the square chain inscribed in $Q$ with central code $Z^{+n}$ (respectively, $Z^{-n}$) and endpoints on the ascending (respectively, descending) diagonal
of $Q$.
Rotating $Q$ by $90^\circ$ maps the ascending Zigzag prototype to the descending one,
and vice versa.

The ascending (respectively, descending) Zigzag inscribed in $Q$ is defined as the intersection
of all $n$-fold ascending (respectively, descending) Zigzag prototypes inscribed in $Q$.

 \begin{equation}\label{zigzag-def}
 Q(Z^{\pm\infty})=\bigcap_{n=1}^\infty Q(Z^{\pm n})
 \end{equation}
We use the term Zigzag without a qualifier when either of the two types is intended.
it refers to either of the two Zigzags.

\centerline{\includegraphics[scale=2]{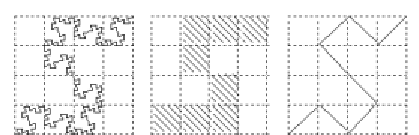}}
 \centerline{Figure 1 }

The figure shows an ascending Zigzag, its prototype, and the diagonal polyline of the prototype.
Note that the Zigzag pattern in the article \cite{Sch26} is incorrect.

The intersection of a Zigzag with a square cell of its $n$-fold prototype is called a \emph{Zigzag cell of order $n$}.
The multiple Zigzag prototypes contain the extreme points of the first-order prototypes; hence,
both Zigzags contain the extreme points of their prototypes, which we call the \emph{extreme points of the Zigzags}.
It follows that, for every order-$n$ cell $F$ of a Zigzag, its circumscribed square $[F]$ is a cell of the $n$-fold prototype.
The cell containing an extreme point of the Zigzag is called an \emph{extreme cell}.

\begin{lem} If the coordinates of all vertices of the cells in the $n$-fold Zigzag prototype are integers, then
the sums of the coordinates of all extreme points of order-$n$ Zigzag cells have the same parity.
 \label{ext-points}
\end{lem}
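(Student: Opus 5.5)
The plan is to show that every extreme point of an order-$n$ Zigzag cell is a main vertex of a cell $F$ of the $n$-fold prototype $Q(Z^{\pm n})$. Once this is established, the claim reduces to a parity statement about the diagonal polyline of that prototype.

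First, I identify the extreme points of a Zigzag cell. Let $F$ be a cell of the $n$-fold prototype, with $F$ Cartesian and $Q$ the containing square. The Zigzag cell $F\cap Q(Z^{\pm\infty})$ is itself a Zigzag inscribed in $F$. This follows from the superposition structure: by Lemma \ref{code-diag} the chain with code $Z^{\pm(n+k)}$ restricted to $F$ is a chain inscribed in $F$ with code $Z^{\pm k}$ or its orthogonal. Consequently its extreme points are the two main vertices of $F$, i.e. the initial and terminal vertices of the pointed square $F$ in the chain.

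Next comes the parity step. Consider any cell of the chain with integer vertex coordinates. Its main diagonal has the form $(x,y)\to(x\pm s,y\pm s)$, where $s$ is the common integer side length. Moving along the diagonal changes $x+y$ by either $\pm 2s$ or $0$, and both are even. So the two main vertices of each cell have coordinate sums of the same parity.

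To pass from one cell to the whole chain, I use the definition of a square chain: the terminal vertex of $F_i$ is the initial vertex of $F_{i+1}$. Along the diagonal polyline, therefore, consecutive main vertices all have the same parity of coordinate sum. By induction along the chain $F_1,\dots,F_m$ of $Q(Z^{\pm n})$, all extreme points of all order-$n$ cells share one parity, namely that of the initial vertex of the chain.

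The main obstacle is the first step: justifying that the extreme points of the Zigzag cell $F\cap Q(Z^{\pm\infty})$ are exactly the main vertices of $F$ as a pointed square in the chain. This requires relating the restriction of $Z^{\pm(n+k)}$ to $F$ with a chain inscribed in $F$ via Lemma \ref{code-diag} and the associativity Lemma \ref{code-ass}. The latter applies because $Z^{\pm}$ is even, having six non-diagonal symbols. It must also be checked that the Zigzag code is a square code, so the subchain in $F$ starts and ends at the main vertices of $F$.

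If the paper intends the extreme points of a Zigzag cell simply to mean the main vertices of $[F]$, this step is immediate from the preceding remark that $[F]$ is a cell of the $n$-fold prototype. The proof then reduces to the elementary parity argument above.
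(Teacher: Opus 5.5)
Your proposal is correct and follows essentially the same argument as the paper. The two main (extreme) vertices of a single integer-vertex cell have coordinate sums of equal parity, consecutive cells of the chain share an extreme vertex, and connectedness of the chain then spreads that parity to all cells. Your extra step, identifying the extreme points of a Zigzag cell with the main vertices of its prototype cell via the superposition and associativity lemmas, only makes explicit what the paper takes from its remark that $[F]$ is a cell of the $n$-fold prototype.
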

\begin{proof}
Indeed, the extreme points of a cell have the same parity. Neighboring cells share an endpoint,
so all their extreme points have the same parity. Since any two cells are connected by a chain of neighboring cells, the claim follows.
\end{proof}

\begin{lem}\label{zig-frac-odd}
Every even cell of a Zigzag is homothetic to the ascending Zigzag,
and every odd cell is homothetic to the descending Zigzag.
\end{lem}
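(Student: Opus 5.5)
We need to show that an order-$n$ Zigzag cell $F\cap Q(Z^{\pm\infty})$, where $F$ is a cell of the $n$-fold prototype, equals $F(Z^{+\infty})$ when $F$ is even and $F(Z^{-\infty})$ when $F$ is odd. Parity is taken with respect to $Q$; by Lemma \ref{3q} it does not matter whether we compute it in $Q$ or in an intermediate prototype cell, since all partitions have even order $4^k$. The plan is to show that the chain $Q(Z^{\pm(n+k)})$, restricted to the cells inside $F$, is exactly the chain inscribed in $F$ with code $Z^{+k}$ (for even $F$) or $Z^{-k}$ (for odd $F$). Intersecting over $k$ then gives the claim via (\ref{zigzag-def}).

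\textbf{Reduction to codes.} By Lemma \ref{code-ass}, since $Z^{\pm}$ is even (it has six non-diagonal symbols), we get $Z^{\pm(n+k)}=Z^{\pm n}\circ Z^{\pm k}$. By Lemma \ref{code-diag}, the chain with this code is obtained from the chain $Q(Z^{\pm n})$ by placing a copy of the chain with code $\perp^{\odd Z^{\pm n}[j]}Z^{\pm k}$ in the $(j+1)$st cell $F_{j+1}$, by formula (\ref{supercode}). Using Lemma \ref{super-perp}, $\perp Z^{+k}=Z^{-k}$. So inside $F_{j+1}$ the subchain is inscribed in $F_{j+1}$ and has code $Z^{+k}$ or $Z^{-k}$, according to whether the prefix parity $\odd Z^{\pm n}[1,j]$ is $0$ or $1$ (for the ascending case; swap for descending).

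\textbf{Matching code parity with cell parity.} By Lemma \ref{code-odd}, $\odd(F_{j+1})=\odd(F_1)+\odd Z^{\pm n}[1,j]$. The first cell $F_1$ lies on the main diagonal of $Q$ at a corner. By Lemma \ref{odd-even} it is even in the ascending case and odd in the descending case, since the partition is even. Hence in the ascending case the prefix parity equals $\odd(F_{j+1})$. In the descending case, the code begins from $Z^{-}$, and the prefix parity is $\odd(F_{j+1})+1$, so the code inside $F_{j+1}$ is $\perp^{\odd(F_{j+1})+1}Z^{-k}$. This is $Z^{+k}$ exactly when $F_{j+1}$ is even. In both cases, even cells carry $Z^{+k}$ and odd cells carry $Z^{-k}$.

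\textbf{Diagonal type.} We must also check that the subchain with code $Z^{+k}$ inside an even cell is ascending rather than descending. The ascending chain has its main vertices on the ascending diagonal of its cell, and Lemma \ref{code-diag} only fixes the entry vertex. I expect this to be the main obstacle. I would handle it by induction on $n$: a $90^\circ$ rotation or translation of an inscribed chain with horizontal code sends an ascending $Z^{+}$ chain to a descending chain, and its code becomes vertical. This is consistent with the paper's convention that only ascending $Z^+$ and descending $Z^-$ occur. Equivalently, in an even cell the main diagonal traversed is parallel to the ascending diagonal of $Q$, checked at order $1$ on the $4\times4$ matrices and propagated by homothety. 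Given that, the portion of $Q(Z^{\pm(n+k)})$ in $F$ is $F(Z^{+k})$ or $F(Z^{-k})$. Intersecting over $k$ and using that $F$ meets other cells only in boundary points already contained in the chain set yields $F\cap Q(Z^{\pm\infty})=F(Z^{\pm\infty})$, which is homothetic to the standard ascending or descending Zigzag.
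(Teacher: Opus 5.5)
\textbf{The gap is in your last step.} The claim that $F$ meets the other prototype cells only in points already contained in its own chain set is false, and so is the identity $F\cap Q(Z^{\pm\infty})=F(Z^{\pm\infty})$.

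Here is a counterexample. Take $Q=[0,1]^2$ and $n=1$. In $Q(Z^{+})$, the even cell $F=[0,\frac14]^2$ and the odd cell $G=[\frac14,\frac12]\times[0,\frac14]$ share the side $\{\frac14\}\times[0,\frac14]$. The descending chain inscribed in $G$ starts at $(\frac14,\frac14)$ and runs down the left column of $G/(4\times4)$. Its second and third cells share the main vertex $(\frac14,\frac18)$. Main vertices of prototype cells belong to every later prototype, so $(\frac14,\frac18)\in F\cap Q(Z^{+\infty})$.

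On the other hand, $F(Z^{+})$ touches the right side of $F$ only in its last cell $[\frac{3}{16},\frac14]^2$. Hence $(\frac14,\frac18)\notin F(Z^{+\infty})$, which is consistent with Lemma \ref{zig-side}. Iterating the construction, the common side in fact carries infinitely many such points.

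So the literal intersection $F\cap Q(Z^{+\infty})$ strictly contains $F(Z^{+\infty})$. Since it has the same circumscribed square $F$, it is not homothetic to the ascending Zigzag. With ``cell'' read literally as in the definition, the equality you assert cannot be proved.

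\textbf{What your argument does establish, and how to repair it.} Your computation proves that $Q(Z^{\pm\infty})=\bigcup_F F(Z^{\pm\infty})$. The union runs over the cells $F$ of the $n$-fold prototype, with sign $+$ for even and $-$ for odd $F$; this holds because an intersection of decreasing sequences commutes with a finite union. You should therefore take the order-$n$ cell in $F$ to be the sub-Zigzag $F(Z^{\pm\infty})$ generated inside $F$. This is exactly what the paper's one-line proof does implicitly: via the homothety $[F]\to Q$ it compares only the prototypes generated inside $F$ with those of $Q$, and then intersects. Lemma \ref{frac-intersect} also requires this reading. With it, your code argument (associativity, the block $\perp^{\odd Z^{\pm n}[1,j]}Z^{\pm k}$ in $F_{j+1}$, and Lemmas \ref{code-odd} and \ref{odd-even}) is a correct and more explicit version of the paper's homothety argument.

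\textbf{The diagonal-type step.} Your sentence about it is wrong as written, since a translation does not change the diagonal type, but the step closes easily. In the construction of Lemma \ref{code-diag}, each subchain is either a translate of the previous one or a translate followed by a $90^\circ$ rotation. Translations preserve the pair (ascending, $Z^{+k}$), and a $90^\circ$ rotation exchanges it with (descending, $Z^{-k}$), so the mixed types never occur. Alternatively, work in cell units: every main vertex of $Q(Z^{\pm n})$ has even coordinate sum, as the corners of $Q$ do because $4^n$ is even. Hence a cell with lower-left corner $(c,r)$ has an ascending main diagonal exactly when $c+r$ is even.
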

\begin{proof}
Homothety maps the prototype of a multiple Zigzag in any cell $F$ to the corresponding prototype in its circumscribed square $Q$.
The parity of a cell in the prototype in $[F]$, measured relative to $Q$, agrees with that of its homothetic image. The same therefore holds for intersections of multiple prototypes.
\end{proof}

\begin{lem} The ascending Zigzag meets the vertical sides of its circumscribed square only at its extreme points.
The descending Zigzag meets the horizontal sides of its circumscribed square only at its extreme points.
 \label{zig-side}
\end{lem}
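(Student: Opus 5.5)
Since rotating $Q$ by $90^\circ$ exchanges the ascending and descending Zigzags and swaps vertical with horizontal sides, it is enough to prove the first assertion. The plan is to control, at every level $n$, which cells of the $n$-fold ascending prototype $Q(Z^{+n})$ touch the vertical sides of $Q$. Write $L$ and $R$ for the left and right vertical sides. I will prove by induction on $n$ the following claim.

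\emph{Claim.} The only cells of $Q(Z^{+n})$ meeting $L$ (resp. $R$) are cells lying on the ascending diagonal of $Q$, i.e. the corner cell at the lower-left (resp. upper-right) vertex. Every other cell touching $L$ or $R$ does so, if at all, only at a single point.

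Once the claim holds, $Q(Z^{+\infty})\cap L$ is contained in the corner cells of every order. Their sizes tend to zero, so the intersection collapses to the extreme point.

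For the base case $n=1$, read off the $4\times4$ matrix of $Z^+$. In the leftmost column only cell $0$ (the lower-left corner) belongs to the chain. In the rightmost column only cell $7$ (the upper-right corner) does. Cell $5$ in column~1 is not in column~0, so it meets $L$ not at all.

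For the induction step, use that $Z^{+(n+1)}=Z^{+}\circ Z^{+n}$, up to the associativity of Lemma~\ref{code-ass}, which applies because $Z^+$ is even. By Lemma~\ref{code-diag} and Lemma~\ref{zig-frac-odd}, the prototype of order $n+1$ is obtained as follows. Inside each cell $F$ of the first-order prototype we place a copy of the order-$n$ prototype: an ascending copy if $F$ is even, a descending copy if $F$ is odd. Points of $Q(Z^{+(n+1)})$ on $L$ can lie only in first-order cells meeting $L$, and by the base case that is only cell $0$. Cell $0$ is even, so its copy is an ascending prototype scaled into $F_0$. The left side of $F_0$ lies on $L$, so by the induction hypothesis applied to $F_0$ only its lower-left corner cell touches $L$, and that cell is the lower-left corner cell of $Q$ at the next order. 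The same argument with cell $7$ handles $R$.

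The main obstacle is the justification that the copy in cell $0$ is genuinely ascending, with its endpoints at the lower-left and upper-right corners of $F_0$. This does not follow from a rotated or reflected placement, so it must be checked directly. The parity lemma (Lemma~\ref{odd-even}) together with Lemma~\ref{zig-frac-odd} gives the diagonal type. Since the code is symmetric, the orientation does not matter. I will also double-check that no descending copy in an odd cell adjacent to $L$ (such as cell $1$, if it reaches $L$ only at a corner) contributes points. Cell $1$ is in column $1$, so it is disjoint from $L$ except through its vertical neighbour; the only remaining risk is a shared corner point, and that point is the endpoint of cell $0$'s copy anyway.
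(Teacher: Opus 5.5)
Your proof is correct and follows essentially the paper's route. You show that at every level only the extreme (corner) cells of the multiple prototype meet the vertical sides, intersect the shrinking corner segments to get the extreme point, and treat the descending case by a $90^\circ$ rotation; your induction via $Z^{+(n+1)}=Z^+\circ Z^{+n}$ supplies the step the paper only asserts. One small cleanup: drop the hedge that other cells might touch $L$ or $R$ at a point and the remark about cell $1$, since cell $1$ lies in column $1$ and is simply disjoint from $L$, and the superposition formula already places the unrotated code $\perp^{0}Z^{+n}=Z^{+n}$ in cell $0$, started at the lower-left corner.
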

\begin{proof}
For an ascending Zigzag prototype of any multiplicity, only the extreme cells meet the vertical sides of the circumscribed square; hence,
the intersections of the multiple prototypes with a vertical side form a nested sequence of segments
whose lengths tend to zero. Their intersection is therefore the extreme point of the prototype.

The claim for a descending Zigzag follows by symmetry, since it is obtained from an ascending Zigzag by a rotation through $90^\circ$.
\end{proof}

\begin{lem}\label{frac-intersect}
The intersection of any two distinct Zigzag cells of the same order is either empty or consists of their common extreme point.
\end{lem}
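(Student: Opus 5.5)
We argue by induction on the order $n$, reducing the problem to pairs of cells that sit in neighbouring or intersecting cells of the first-order prototype.

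\emph{Base case $n=1$.} Let $F$ and $G$ be distinct first-order cells, so $[F]$ and $[G]$ are distinct cells of the prototype $Q(Z^{\pm})$ in $Q/(4\times4)$. If $[F]\cap[G]=\emptyset$ there is nothing to prove. Otherwise there are two cases.

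\emph{Neighbouring cells.} Suppose $[F]$ and $[G]$ are neighbours in the chain, so they share an extreme vertex $v$. If the connecting symbol is diagonal, then $[F]\cap[G]=\{v\}$ and we are done, since $v$ is an extreme point of both cells. If the symbol is horizontal or vertical, the two squares share a side. The chain then turns, so the cells have opposite parity. By Lemma \ref{zig-frac-odd} one cell is an ascending Zigzag and the other a descending one, each scaled into its square. The common side is vertical for one of them and horizontal for the other. Lemma \ref{zig-side} says an ascending Zigzag meets vertical sides only at extreme points and a descending one meets horizontal sides only at extreme points. Checking orientations on the two prototype matrices (for $Z^+$ the steps $0\to1$ and $1\to2$ are horizontal and join an ascending cell to a descending one), the shared side is exactly a side that one of the two cells meets only at its extreme points. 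Hence $F\cap G\subset\{v\}$, and $v\in F\cap G$.

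\emph{Non-neighbouring but intersecting cells.} By Lemma \ref{proto1}, the only such pairs are a central cell and a cell adjacent to an extreme cell, and they have the same parity. From the matrices, such squares meet in a single corner $w$, for example cells $4$ and $2$, or $3$ and $5$, of $Z^+$. Both cells therefore have the same type, and they lie on a common diagonal of the $2\times2$ square they span. The corner $w$ is not a main vertex of either cell's chain, so $w\notin F\cup G$. Lemma \ref{proto2}, applied after scaling, then gives $\operatorname{dist}(F,G)>0$, so $F\cap G=\emptyset$.

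\emph{Inductive step.} Let $F$ and $G$ be distinct order-$n$ cells. They lie in first-order cells $F_1\supset F$ and $G_1\supset G$. If $F_1=G_1$, then by homothety (Lemma \ref{zig-frac-odd}) $F$ and $G$ are order-$(n-1)$ cells of a scaled Zigzag, and induction applies. If $F_1\cap G_1=\emptyset$ we are done. If $F_1\cap G_1$ is empty, the proof is complete; by the base case the remaining possibility is $F_1\cap G_1=\{v\}$ with $v$ a common extreme point of $F_1$ and $G_1$. Then $F\cap G\subset\{v\}$. If $v\in F\cap G$, then $v$ is an extreme point of $F_1$ lying in $F$. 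Since $F$ is a subcell of $F_1$, this forces $F$ to be the extreme cell of $F_1$ containing $v$. Indeed, by the inductive hypothesis and the base-case argument inside $F_1$, the point $v$ lies only in the extreme subcell of $F_1$, because interior subcells avoid the corners of $F_1$ by Lemma \ref{zig-side}. So $v$ is an extreme point of $F$, and likewise of $G$.

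\emph{Main obstacle.} The hardest step is the side-sharing neighbour case. Lemma \ref{zig-side} controls only one pair of sides for each type, so one must verify from the explicit $4\times4$ matrices that in every horizontal or vertical step the shared side is of the controlled kind for at least one of the two cells. I would check this case by case on $Z^+$ and obtain $Z^-$ by the $90^\circ$ rotation.
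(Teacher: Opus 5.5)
Your proof is correct in substance, but it is organized differently from the paper's. The paper does not induct on the order. For two intersecting order-$n$ cells it uses only that $[F]$ and $[G]$ are cells of the regular $n$-fold partition, so they either share a side or share only a corner. In the side case the cells have opposite parity, so by Lemma~\ref{zig-frac-odd} one is ascending and one descending. Lemma~\ref{zig-side}, applied to the even cell for a vertical side and to the odd cell for a horizontal side, then confines the intersection to an extreme point. In the corner case the cells have the same parity, and the shared corner, lying in both, is an extreme point of each.

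You instead carry out this analysis only at order one. There you sort pairs by chain adjacency, using the explicit $4\times4$ matrices and Lemmas~\ref{proto1} and~\ref{proto2}. You then lift to order $n$ by noting that the common extreme point $v$ of $F_1$ and $G_1$ can lie only in the extreme subcell of each. Since Lemmas~\ref{zig-frac-odd} and~\ref{zig-side} hold at every order, the side/corner dichotomy works verbatim at order $n$, so neither the induction nor the neighbour classification is needed. The paper's version also never inspects the prototype matrices, which is why it carries over unchanged to $r$-fold Zigzags; your base case would have to be redone for the $r$-fold prototype. What your route gives in addition is a positive separation, via Lemma~\ref{proto2}, for non-neighbouring cells whose squares touch. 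This lemma does not need that, but Lemma~\ref{sosed-dist} later does.

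Some details to fix:

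\begin{itemize}
\item The non-adjacent intersecting pairs in $Z^+$ are cells $1,3$ and $4,6$. Cells $4,2$ and cells $3,5$ are disjoint.
\item The sentence ``the common side is vertical for one of them and horizontal for the other'' should instead say that one cell is ascending and the other descending. Put that way, your ``main obstacle'' disappears. A vertical common side is controlled by Lemma~\ref{zig-side} for the ascending cell, and a horizontal one for the descending cell, so no case check is required.
\item In the corner case, the step ``$w$ is not a main vertex, so $w\notin F\cup G$'' is exactly Lemma~\ref{zig-side}, since every corner lies on both a vertical and a horizontal side. Then $F\cap G\subset[F]\cap[G]=\{w\}$ already gives $F\cap G=\emptyset$. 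So Lemma~\ref{proto2} is superfluous there, and in any case its hypothesis concerns the prototype sets $F(Z)$ and $G(Z)$, not the Zigzag cells.
\item In the inductive step, the reason $v$ lies only in the extreme subcell is not Lemma~\ref{zig-side}. It is that a corner of $[F_1]$ belongs to exactly one cell of the regular partition of $[F_1]$. That cell is the first or last cell of the chain inscribed in $[F_1]$, and it has $v$ as a main vertex.
\end{itemize}
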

\begin{proof}
It suffices to consider cells of the ascending Zigzag. Let two such cells intersect. Their circumscribed squares either share a side or a vertex.
In the first case, the cells have opposite parity. If the common side is vertical, the even cell meets it at an extreme point; if it is horizontal, the odd cell does.
If the circumscribed squares share a vertex, the cells have the same parity, and the shared vertex is
an extreme point of both cells.
\end{proof}

Distinct cells of the same order in a Zigzag with nonempty intersection are called \emph{neighbors}.

\begin{lem} Cells of an order-$n$ Zigzag are neighbors if and only if their indices in the chain $Q(Z^\pm)$ differ by one.
\label{zig-sosed}
\end{lem}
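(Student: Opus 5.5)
The plan is to prove the two implications separately. The ``if'' direction is easy; the ``only if'' direction is reduced, by induction on $n$, to Lemma \ref{proto1} and Lemma \ref{proto2}. Throughout, Lemma \ref{frac-intersect} says that two distinct order-$n$ cells meet in at most a common extreme point, so ``neighbors'' means their intersection is a single common extreme point.

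\emph{If.} Let two cells have indices differing by one. By the definition of a square chain, the terminal vertex of the $k$th cell of the prototype $Q(Z^{\pm n})$ is the initial vertex of the $(k+1)$st. This vertex is an extreme point of both circumscribed squares. The Zigzag contains the extreme points of every prototype cell, because each cell is homothetic to a Zigzag (Lemma \ref{zig-frac-odd}), and Zigzags contain their extreme points. Hence the two Zigzag cells share this point and are neighbors.

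\emph{Only if.} I argue by induction on $n$.

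For $n=1$, let $F,G$ be non-consecutive cells of the prototype whose circumscribed squares intersect. By Lemma \ref{proto1} they have the same parity. Their squares cannot share a side, since cells sharing a side have opposite parity. So they share only a vertex $v$ and lie on a common diagonal of the $2\times 2$ block they span. By the matrices of the Zigzag codes, such pairs consist of a central cell and a cell next to an extreme cell, and $v$ is not an extreme point of either chain segment there. By Lemma \ref{zig-frac-odd} each of the two Zigzag cells is a scaled Zigzag whose extreme points lie on the main diagonal of its cell, of the type fixed by its parity. The condition that $v$ is not a main vertex of the relevant type for either cell is exactly the hypothesis of Lemma \ref{proto2}. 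Applying it with the appropriate $Z$ shows the two Zigzag cells lie at positive distance, so they are not neighbors. The bound $1/4$ is measured in the $2\times2$ block, and at every deeper order the cells stay inside the sets $F(Z)$, $G(Z)$, so the separation persists.

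For the inductive step, take two order-$n$ cells with nonempty intersection. If they lie in the same first-order cell, apply the induction hypothesis inside that cell, using the homothety of Lemma \ref{zig-frac-odd}, which preserves chain order up to reversal. If they lie in different first-order cells $A,B$, then $A\cap B\neq\emptyset$. By the base case, $A$ and $B$ are consecutive and meet only at their common extreme point $p$. The two order-$n$ cells must both contain $p$. Since $p$ is an extreme point of a Zigzag cell, only the extreme cell of that cell contains it: by Lemma \ref{frac-intersect} applied inside $A$, and by Lemma \ref{zig-side}-type reasoning showing the other cells avoid the corner. So one cell is the last order-$n$ cell of $A$ and the other is the first of $B$, and by the superposition structure of $Z^{\pm n}$ their indices are consecutive.

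The main obstacle is the base case: checking carefully from the matrices that every intersecting non-consecutive pair meets at a vertex that is not a main vertex of either cell's chain type, so that Lemma \ref{proto2} applies. A second point to verify is that a corner point $p$ lies in a unique order-$n$ subcell. I would justify this with Lemma \ref{zig-side} together with the observation that non-extreme prototype cells avoid the main vertices of $Q$.
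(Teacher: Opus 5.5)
Your proposal is correct and follows essentially the paper's proof: induction on $n$, a base case read off from the $4\times4$ matrices (the only non-consecutive prototype cells whose squares intersect are $1,3$ and $4,6$, which meet at a vertex lying in neither inscribed Zigzag), and an inductive step that passes to first-order cells by homothety and handles neighbors in different first-order cells through their common extreme point. The differences are cosmetic: you use Lemmas \ref{proto1} and \ref{proto2} to separate the two cells in the base case, where the paper simply notes that the shared vertex is not in either Zigzag set, and you argue over pairs of cells rather than counting the neighbors of each cell.
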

\begin{proof}
We prove the claim by induction on $n$.
If the indices of two cells in the Zigzag prototype chain differ by one, then the cells share an endpoint.
If the indices of cells $F$ and $G$ differ by more than one, their circumscribed squares intersect in only two cases:
when the indices are $1,3$ or $4,6$. In either case, the squares meet only at a vertex, which does not belong to the Zigzag sets
inscribed in those squares.

Assume the claim holds for order-$n$ cells. Let $F$ be a cell of order $n+1$ contained in a first-order cell $G$.
Apply a homothety mapping $G$ onto a Zigzag to map $F$ to an order-$n$ cell.
The induction hypothesis applies to that image.
If $F$ is not an extreme cell of $G$, its image has exactly two neighboring cells,
whose prototypes give the desired neighbors of $F$; it has no neighbors outside $G$.
If $F$ is an extreme cell of $G$, its image is also extreme in the Zigzag and hence $F$
has exactly one neighbor inside $G$. If the common extreme point $E$ of $F$ and $G$ is also an extreme point of the Zigzag,
then $F$ has no neighbor outside $G$. Otherwise,
the cell $G\prime$ adjacent to $G$ intersects $G$ exactly at $E$, so the order-$(n+1)$ cell in $G\prime$ containing $E$
is the second neighbor of $F$ and has no other neighbors.
\end{proof}

\subsection{Multiple Zigzag.}
In this section, $r$ denotes the natural number specifying the multiplicity of the Zigzag.
We denote by $\times_r$ the chain code of length $2^{r-1}$ consisting entirely of diagonal symbols.
In particular, the code $\times_1$ is empty, and $\times_2=(\times)$.
The chain codes of the $r$-fold Zigzag prototypes are given by the following formulas:
\begin{equation}\label{r-proto}
 Z^+_r=\times_r\circ (--|\times|--)\qquad Z^-_r=\times_r\circ (||-\times-||)
\end{equation}
The codes $Z_r^+$ and $Z_r^-$ are symmetric and mutually orthogonal.

The square chains $Q(Z_r^+)$ and $Q(Z_r^-)$ consist of cells
in the regular partition $Q/(2^{r+1}\times2^{r+1})$ of the Cartesian square $Q$.
The sets of these chains are called the \emph{prototypes of the $r$-fold ascending and descending Zigzags}, respectively.

The prototype of an $r$-fold Zigzag (an \emph{$r$-Zigzag} for short) is divided into eight groups of cells,
called \emph{blocks}; each block is a Zigzag prototype inscribed
in a diagonal cell of the partition $Q/(2^{r-1}\times2^{r-1})$.
This diagonal cell is called the \emph{block box}.
For the ascending prototype, the block boxes are the ascending diagonal cells; for the descending prototype, they are the descending ones.
Thus, the total number of cells in the $r$-fold prototype is $8\cdot2^{r-1}=2^{r+2}$.

When $r=1$, this definition of a prototype agrees with the earlier definition of a Zigzag prototype.
of Zigzag prototypes.
\begin{lem}\label{povorot}
Rotating $Q$ by $90^\circ$ about its center maps $Q(Z_r^+)$ to $Q(Z_r^-)$ and vice versa; every block
of the ascending prototype is mapped to a block of the descending prototype, and conversely.
\end{lem}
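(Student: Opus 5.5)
We prove Lemma \ref{povorot} by tracking how the rotation acts on central codes and on the block structure.

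Let $\rho$ be the rotation of $Q$ by $90^\circ$ about its center. It maps the partition $Q/(2^{r+1}\times2^{r+1})$ onto itself and carries square chains to square chains, preserving adjacency of consecutive cells. It interchanges the ascending and descending diagonals of $Q$. It also interchanges horizontal and vertical directions while fixing diagonal ones. Hence if $F_*$ is a square chain with central code $C$, then $\rho(F_*)$ is a square chain with central code $\perp(C)$.

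Now let $F_*$ be the chain with code $Z_r^+$ inscribed in $Q$ with endpoints on the ascending diagonal. Its image $\rho(F_*)$ has its initial and final vertices at the two ends of the descending diagonal, and its circumscribed rectangle is $\rho(Q)=Q$. So $\rho(F_*)$ is a chain inscribed in $Q$ with code $\perp(Z_r^+)$. Since $\perp(\times_r)=\times_r$ and $\perp(--|\times|--)=(||-\times-||)$, Lemma \ref{super-perp} gives
$$\perp(Z_r^+)=\perp(\times_r)\circ\perp(--|\times|--)=\times_r\circ(||-\times-||)=Z_r^-.$$
By the uniqueness in Lemma \ref{sq2chain}, $\rho(F_*)$ is a chain $Q(Z_r^-)$. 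Because $Z_r^-$ is symmetric, the set $Q(Z_r^-)$ does not depend on which endpoint is chosen as the initial one. The same argument, using $\rho$ or $\rho^{-1}$ and $\perp^2=\mathrm{id}$, maps $Q(Z_r^-)$ onto $Q(Z_r^+)$.

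For the blocks, $\rho$ maps the partition $Q/(2^{r-1}\times2^{r-1})$ onto itself and carries ascending-diagonal cells to descending-diagonal cells. So block boxes of the ascending prototype go to block boxes of the descending prototype. The blocks are the consecutive segments of $2^{r+2}/2^{r-1}=8$ cells of the chain, namely the pieces $\perp^{\odd}(\dots)$ in the superposition formula (\ref{supercode}) with outer code $\times_r$. Since $\rho$ preserves the order of cells along the chain, it maps the $k$th block of $F_*$ to the $k$th consecutive segment of $8$ cells of $\rho(F_*)$, which is a block of the descending prototype. The inverse rotation gives the converse.

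The only delicate point is the claim that $\rho$ acts on central codes exactly by $\perp$ and that the image chain is again inscribed. This follows because $\rho$ preserves the $l_\infty$ geometry and swaps the coordinate axes, so the argument is mainly bookkeeping.
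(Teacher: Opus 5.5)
Your proof is correct, but it takes a different route from the paper. The paper argues with sets. The rotation swaps the two diagonals of $Q$, so it swaps the ascending and descending block boxes of $Q/(2^{r-1}\times 2^{r-1})$. Inside each box it carries the first-order ascending Zigzag prototype to the descending one, which can be read off the two $4\times 4$ matrices. Since each $r$-fold prototype is the union of its blocks, the lemma follows.

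You argue with ordered chains and their codes instead:
\begin{itemize}
\item the rotation acts on central codes by $\perp$;
\item Lemma \ref{super-perp} gives $\perp(Z_r^+)=\perp(\times_r)\circ\perp(Z^+)=\times_r\circ Z^-=Z_r^-$;
\item the uniqueness in Lemma \ref{sq2chain} identifies the rotated chain with $Q(Z_r^-)$;
\item blocks are then matched as consecutive $8$-cell segments of the superposition.
\end{itemize}

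The paper's argument is shorter. It needs only the first-order picture and the block description of both prototypes. Yours works directly from the defining codes and already-proved lemmas. It also gives a finer statement: the $k$th block goes to the $k$th block, with the chain order respected. Finally, because $\perp$ commutes with superposition, your argument extends word for word to the $n$-fold prototypes $Q(Z_r^{\pm n})$, and hence to the Zigzags $Q(Z_r^{\pm\infty})$ themselves. The paper relies on that fact without a separate proof, for instance in Lemma \ref{zig-side} and when it treats only the ascending unit Zigzag.
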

\begin{proof}
A rotation by $90^\circ$ maps the ascending diagonal of $Q$ to the descending diagonal. It therefore maps ascending block boxes
to descending block boxes and vice versa. The lemma follows because the $90^\circ$ rotation of an ascending Zigzag prototype
is a descending prototype, and conversely.
\end{proof}

We denote the $n$-fold superpositions of $Z_r^+$ and $Z_r^-$ by $Z_r^{+n}$ and $Z_r^{-n}$, respectively.
The corresponding chains $Q(Z_r^{+n})$ and $Q(Z_r^{-n})$ inscribed in $Q$ are called the
$n$-fold prototypes of the ascending and descending Zigzags.

Every cell $F$ in an $(n+1)$-fold prototype of either the ascending or descending $r$-Zigzag is contained in a single cell
of the corresponding $n$-fold prototype. We denote that cell by $\pr(F)$ and call it the \emph{parent} of $F$.
Cells of an $(n+1)$-fold prototype belong to the same block if they have the same parent cell in the $n$-fold prototype
and lie in the same block of that parent cell.
Each block crosses the boundary of its block box at two vertices lying on its diagonal. Depending
on the diagonal, a block is called \emph{ascending} or \emph{descending}.

Finally, the ascending (respectively, descending) $r$-fold Zigzag $Q(Z_r^{+\infty})$ (respectively, $Q(Z_r^{-\infty})$) inscribed in $Q$ is defined as the intersection of all its
ascending (respectively, descending) prototypes.
$$ Q(Z_r^{\pm\infty})=\bigcap_{n=1}^\infty Q(Z_r^{\pm n}) $$
Note that
\begin{equation}\label{opis}
 [Q(Z_r^{+\infty})]= [Q(Z_r^{-\infty})]= Q
\end{equation}
Thus, the two possible meanings of the phrase ``the square $Q$ is circumscribed about the $r$-Zigzag are equivalent.
Indeed, $Q(Z_r^{+\infty})$ contains the endpoints of the ascending diagonal of $Q$,
and $Q(Z_r^{-\infty})$ contains the endpoints of the descending diagonal.

Figure 2 shows a double ascending Zigzag and its prototype.

\centerline{\includegraphics[scale=1]{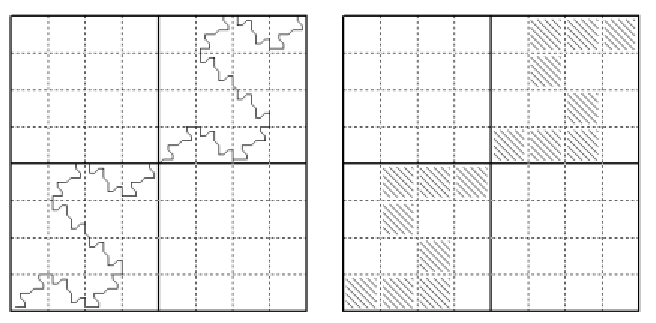}}
\centerline{Double Zigzag and its prototype.}

We now describe the fractal and block structure of the $r$-Zigzags.
An order-$n$ cell of $Q(Z_r^{\pm\infty})$ is its intersection with a cell of its $n$-fold prototype. By construction, every cell
of an $r$-Zigzag is homothetic to an ascending or descending $r$-Zigzag inscribed in its circumscribed square $[F]$; its diagonal type is determined by the parity of the corresponding prototype cell.

Order-$n$ cells of a multiple Zigzag belong to the same block if their corresponding cells in the $n$-fold prototype belong to the same block.
A block box of order $n$ is the square circumscribed about the union of eight order-$n$ cells.
forming a block.

 Lemmas \ref{zig-frac-odd} and \ref{zig-side}, \ref{frac-intersect} and \ref{zig-sosed} remain valid for Zigzags of any
The reader is invited to verify that Lemmas \ref{zig-frac-odd}, \ref{zig-side}, \ref{frac-intersect}, and \ref{zig-sosed} remain valid for Zigzags of any multiplicity.
The following observation may be helpful:
 if two cells of a multiple Zigzag of the same order intersect, then their extreme blocks intersect.

\subsection{Parameterizations of the Zigzag.}

In this section, we use the $l_\infty$ metric on the plane.
We call the Zigzag inscribed in $[0,1]^2$ the \emph{unit Zigzag}.
The ascending and descending unit Zigzags are isometric in this metric,
and since the results of this section concern only metric properties,
it suffices to consider the ascending unit Zigzag.
We denote the unit ascending Zigzag of the multiplicity $r$ by $\z^+_r$.

\begin{lem}\label{sosed-dist} Let $F$ and $G$ be disjoint order-$n$ cells of an $r$-fold unit Zigzag.
Then the distance between them is at least
$2^{-n(r+1)-1}$.
\end{lem}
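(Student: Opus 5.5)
Order-$n$ cells of the unit $r$-Zigzag have circumscribed squares of side $2^{-n(r+1)}$, because each superposition of $Z_r^\pm$ refines by the partition $2^{r+1}\times 2^{r+1}$. The bound $2^{-n(r+1)-1}$ is therefore half the side of an order-$n$ cell. The plan is an induction on $n$, using self-similarity (Lemma \ref{zig-frac-odd} for multiple Zigzags) to reduce to two configurations: cells inside one first-order cell, and cells in two distinct first-order cells.

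\textbf{Base case $n=1$.} Take two disjoint first-order cells $F,G$ of $\z_r^+$, with $[F],[G]$ cells of $Q/(2^{r+1}\times 2^{r+1})$ lying in the prototype $Q(Z_r^+)$, $Q=[0,1]^2$. If $[F]\cap[G]=\emptyset$, the distance between the squares is already at least one side, $2^{-(r+1)}$. Otherwise the squares share a side or a vertex, yet the Zigzag cells are disjoint. By the multiple-Zigzag version of Lemma \ref{zig-sosed}, $F$ and $G$ are then not neighbours in the chain. I will check directly from the block structure which non-consecutive prototype cells touch:
\begin{itemize}
\item inside one block, the pairs with positions $1,3$ and $4,6$, as in Lemma \ref{proto1};
\item between consecutive blocks, which lie in diagonally adjacent block boxes and meet only at a box corner;
\item nothing else, since non-consecutive block boxes are disjoint.
\end{itemize}
In each touching case the two cells lie on a common diagonal of a $2\times2$ square and have the same parity, and the shared vertex is not on either inscribed Zigzag, by Lemma \ref{zig-side} and the extreme-point structure. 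I then apply the limiting version of Lemma \ref{proto2}. The nested prototypes decrease to the Zigzag, and the argument of that lemma gives separation of at least a quarter of the $2\times2$ square's side, which is half a cell side. So the distance is at least $2^{-(r+1)-1}$. For shared sides, opposite parity and Lemma \ref{zig-side} should give separation: the even cell touches a vertical side only at its extreme point, and one has to see that the odd cell avoids that point. I expect this to be subsumed by the same Lemma \ref{proto2}-type estimate.

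\textbf{Inductive step.} Let $F,G$ be disjoint order-$(n+1)$ cells.
\begin{itemize}
\item If they lie in the same first-order cell $P$, the homothety of $P$ onto the unit Zigzag has ratio $2^{r+1}$ and maps them to disjoint order-$n$ cells. The induction hypothesis gives distance $2^{-(r+1)}\cdot 2^{-n(r+1)-1}$, as required.
\item If they lie in distinct first-order cells $P\ne P'$ that are disjoint, the base case gives distance at least $2^{-(r+1)-1}$, which is larger than needed.
\item If $P$ and $P'$ are neighbours meeting at their common extreme point $E$, then the cells of $P$ near $P'$ are controlled by Lemma \ref{zig-side}. Near $E$, both $P$ and $P'$ look like extreme order-$n$ cells of Zigzags touching at $E$, and $F,G$ are order-$(n+1)$ cells with at least one not containing $E$.
\end{itemize}

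The last case is the main obstacle: cells on opposite sides of a shared point, at deep order. I would handle it by descending to the level $k$ at which $F$ and $G$ first split, where their order-$k$ ancestors are neighbours meeting at $E$. Rescaling by $2^{k(r+1)}$ reduces to showing the following. In two neighbouring first-order cells meeting at $E$, an order-$m$ cell of one not containing $E$ lies at distance at least half its side from the other cell. The extreme sub-cells at $E$ are nested homothetic copies lying in complementary quadrants around $E$, because the adjacent block boxes meet only at $E$ diagonally. So the other cell is confined to the opposite closed quadrant at $E$. A cell of $P$ not containing $E$ is separated from $E$'s quadrant boundary by Lemma \ref{zig-side} applied at the appropriate order, and again by the Lemma \ref{proto2} estimate. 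This should give the half-side bound uniformly in $m$.
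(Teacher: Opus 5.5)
Your base case and your cases (a) and (b) are fine. The base case is essentially the paper's argument: disjoint squares are a full side apart, and at order one the only non-consecutive cells with touching squares are the pairs $1,3$ and $4,6$ of a block, which Lemmas~\ref{proto1} and~\ref{proto2} handle. The shared-side situation you leave open is actually vacuous, and your expectation there is backwards. If the squares of two same-order cells share a side, the cells have opposite types. The even cell's extreme point on that side is also an endpoint of the odd cell's main diagonal, so the odd cell passes through it and the cells intersect.

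The gap is in case (c), which you rightly single out as the main obstacle. Your quadrant argument assumes that neighbouring first-order cells $P,P'$ lie in opposite quadrants at $E$. That holds only when $[P]\cap[P']=\{E\}$, i.e.\ for a diagonal symbol of the code. For a horizontal or vertical symbol (six of the seven transitions in each block), the squares share a side and the cells lie in \emph{adjacent} quadrants at $E$. Moreover, the descending cell meets the common vertical side in many points, since Lemma~\ref{zig-side} restricts only its horizontal sides. So its cells not containing $E$ are not separated from the quadrant boundary at all.

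Concretely, take $r=1$, $P$ = cell $0$ (ascending, $[0,1/4]^2$), $P'$ = cell $1$ (descending, $[1/4,1/2]\times[0,1/4]$), and $E=(1/4,1/4)$.
\begin{itemize}
\item The second sub-cell of $P'$ lies in $[1/4,5/16]\times[1/8,3/16]$ and contains the point $(1/4,1/8)$ of the common side.
\item Its square touches the square $[3/16,1/4]^2$ of the terminal sub-cell of $P$ at $(1/4,3/16)$.
\end{itemize}
Nothing in your argument bounds the distance for this pair.

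The missing idea is a parity one. Both sub-cells are even (ascending), and their squares share only a corner lying off both main diagonals. That corner is therefore outside both prototypes, and Lemma~\ref{proto2} gives distance at least $1/32$, which is half a side.

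The same reasoning makes the induction unnecessary. Take disjoint order-$n$ cells whose squares intersect. By the remark above, the squares can share only a vertex. Then the cells have equal parity, the vertex is off both main diagonals, and Lemma~\ref{proto2} (in its $r$-fold form) applies directly at order $n$.

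This direct two-case argument at order $n$ is the paper's route. The paper does assert that such cells lie in one block. The pair above (indices $7$ and $9$ at order two, with different parents) shows that this is not literally true. What actually carries the estimate is equality of parity plus the off-diagonal position of the common vertex.
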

\begin{proof}
If the circumscribed squares of the cells are disjoint, then the distance between the cells is at least the distance between these squares, which is at least $2^{-(r+1)n}$.
If the circumscribed squares intersect but the cells do not, the cells belong to the same block, whose central code is that of the Zigzag prototype.
By Lemma \ref{proto1}, the circumscribed squares of the cells have the same parity. Hence, Lemma \ref{proto2} implies that
the Zigzag prototypes inscribed in them are separated by at least half the side length of the squares. Since the cells lie inside these prototypes,
the distance between the cells is at least $2^{-(r+1)n-1}$.
\end{proof}

Let $Q(Z_r^{+n})$ denote the square chain whose initial point is the lower-left corner of $Q$.
For any point $x$ of the Zigzag $Q(Z_r^{+\infty})$,
let $Q_n(x)$ be the cell of the chain $Q(Z_r^{+n})$ containing $x$ that is farthest, in chain distance, from
the lowest extreme cell. Let $\nu_n(x)$ be the chain distance from $Q_n(x)$ to the lowest
cell of $Q(Z_r^{+n})$.
Then the following recurrence relation holds:
\begin{equation}\label{rec-nu}
 \nu_n(x)=2^{r+2}\nu_{n-1}(x)+\nu_n(x)\bmod2^{r+2}.
 \end{equation}

Define a map from the Zigzag $Q(Z_r^{+\infty})$ to the unit interval by
\begin{equation*}
 \nu(x)=\sum_{i=1}^{\infty} 2^{-i(r+2)} (\nu_i(x)\bmod 2^{r+2})
\end{equation*}

By virtue of \eqref{rec-nu}, we get

\begin{equation}\label{nu-lim}
 \nu(x)=\lim_{n\to\infty}
 \frac{\nu_n(x)}{2^{n(r+2)}}
\end{equation}

\begin{lem} For any points $x,y$ of the unit Zigzag $\z_r^+$, the following inequalities hold in the $l_\infty$ metric:
\label{hold-lem}    $$
2^{(r+2)^2+r+1}\ge \frac{|\nu(x)-\nu(y)|^{r+1}}{|x-y|^{r+2}}\ge 2^{-(r+2)^2}
$$
\end{lem}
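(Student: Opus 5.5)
We prove the two inequalities separately, comparing the parameter gap $|\nu(x)-\nu(y)|$ and the spatial gap $|x-y|$ through the order-$n$ cells of the $n$-fold prototype. Set $N=2^{r+2}$ for the number of cells of the prototype and $\lambda=2^{-(r+1)}$ for the similarity ratio. Order-$n$ cells then have side $\lambda^n$, and each one is the image of the parameter interval $[kN^{-n},(k+1)N^{-n}]$ under $\nu^{-1}$. By \eqref{nu-lim} and the recurrence \eqref{rec-nu}, every point $x$ in the order-$n$ cell with chain index $k$ satisfies $\nu(x)\in[kN^{-n},(k+1)N^{-n}]$. Points on a common boundary of two cells are handled by the ``farthest cell'' convention, and any choice is harmless up to one cell. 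The key identity is $N^{-n}=(\lambda^n)^{(r+2)/(r+1)}$, so $|\nu(x)-\nu(y)|^{r+1}$ and $|x-y|^{r+2}$ are comparable at every scale.

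\emph{Upper bound} ($|\nu(x)-\nu(y)|^{r+1}\lesssim|x-y|^{r+2}$). Let $x\neq y$. If $x$ and $y$ lie in distinct cells, choose $n$ maximal such that the order-$n$ cells containing them are equal or neighbors. Then the order-$(n+1)$ cells containing them are neither equal nor neighbors. By Lemma \ref{frac-intersect} and the multiple-Zigzag version of Lemma \ref{zig-sosed}, these order-$(n+1)$ cells are disjoint. Lemma \ref{sosed-dist} then gives
$$|x-y|\ge 2^{-(n+1)(r+1)-1}.$$
On the other hand, $x$ and $y$ lie in at most two consecutive order-$n$ cells, so $|\nu(x)-\nu(y)|\le 2N^{-n}$. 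Raising the first inequality to the power $r+2$ and the second to the power $r+1$, and substituting, we get a constant of the form $2^{(r+1)}\cdot 2^{(r+1)(r+2)+(r+2)}$. This is at most $2^{(r+2)^2+r+1}$, which is the stated constant. The cases with no cell boundary between $x$ and $y$ reduce to the same argument at a deeper level.

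\emph{Lower bound} ($|x-y|^{r+2}\lesssim|\nu(x)-\nu(y)|^{r+1}$). Given $x\ne y$, choose $n$ with $N^{-n-1}< |\nu(x)-\nu(y)|\le N^{-n}$. Then $\nu(x)$ and $\nu(y)$ lie in the same or in adjacent parameter intervals of length $N^{-n}$. Hence $x$ and $y$ lie in one order-$n$ cell or in two neighboring ones. Since every cell is contained in its circumscribed square of side $\lambda^n$, and neighboring cells share a point, in the $l_\infty$ metric we get
$$|x-y|\le 2\lambda^n .$$
Then $|x-y|^{r+2}\le 2^{r+2}\lambda^{n(r+2)}=2^{r+2}N^{-n(r+1)}$. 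Finally, $N^{-n(r+1)}< N^{r+1}|\nu(x)-\nu(y)|^{r+1}$, so the total constant is $2^{r+2}\cdot 2^{(r+2)(r+1)}=2^{(r+2)^2}$. This is exactly the required bound.

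\emph{Main obstacle.} The hard part is the separation step in the upper bound: showing that cells which are not neighbors in the chain are genuinely disjoint, so that Lemma \ref{sosed-dist} can be applied. This rests on extending Lemmas \ref{frac-intersect} and \ref{zig-sosed} to multiplicity $r$, which the paper leaves to the reader. I would first check that extension via the block structure. The key observation there is that intersecting cells have intersecting extreme blocks, which reduces the question to the $r=1$ prototype analysis already done in Lemmas \ref{proto1} and \ref{proto2}.
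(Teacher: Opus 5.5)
Your upper-bound part (the left inequality) is the paper's own argument; your maximal $n$ is the paper's $n-1$. The gap is in your lower-bound part (the right inequality), at the step ``Hence $x$ and $y$ lie in one order-$n$ cell or in two neighboring ones.''

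What you have actually justified is only the forward inclusion: a point of the order-$n$ cell with index $k$ has $\nu(x)\in[kN^{-n},(k+1)N^{-n}]$. From $|\nu(x)-\nu(y)|\le N^{-n}$, this inclusion alone gives only $|\nu_n(x)-\nu_n(y)|\le 2$. That yields $|x-y|\le 3\cdot 2^{-n(r+1)}$ and the constant $3^{r+2}2^{(r+1)(r+2)}$, which is larger than $2^{(r+2)^2}$. So ``harmless up to one cell'' is exactly where the stated constant is lost.

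To get an index difference of at most one, you need the converse inclusion, that $\nu^{-1}([kN^{-n},(k+1)N^{-n}])$ is contained in cell $k$. Your opening sentence takes this for granted by speaking of ``the image under $\nu^{-1}$.'' It is true, but it needs an argument:
\begin{itemize}
\item If all digits $\nu_i(x)\bmod N$ with $i>n$ equal $N-1$, then $x$ is the terminal point of $Q_n(x)$. So $x$ also lies in the next cell, contradicting the farthest-cell convention (unless $x$ is the end of the Zigzag).
\item Dually, if all these digits are zero, then $x$ is the initial point of $Q_n(x)$.
\end{itemize}
Separately, choosing $n$ with $N^{-n-1}<|\nu(x)-\nu(y)|\le N^{-n}$ presupposes $\nu(x)\ne\nu(y)$ for $x\ne y$. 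That is the injectivity of $\nu$, which the paper derives \emph{from} this very lemma.

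The simplest repair is what the paper does: use one scale for both inequalities. Take your maximal $n$.
\begin{itemize}
\item The cells $Q_{n+1}(x)$ and $Q_{n+1}(y)$ are neither equal nor neighbors. By Lemma~\ref{zig-sosed}, their chain indices differ by at least $2$. The forward inclusion alone then gives $|\nu(x)-\nu(y)|\ge N^{-(n+1)}$, which in particular proves $\nu(x)\ne\nu(y)$.
\item The cells $Q_n(x)$ and $Q_n(y)$ are equal or neighbors, so $|x-y|\le 2\cdot 2^{-n(r+1)}$.
\end{itemize}
Hence
\[
\frac{|\nu(x)-\nu(y)|^{r+1}}{|x-y|^{r+2}}\ge \frac{2^{-(n+1)(r+1)(r+2)}}{2^{(r+2)-n(r+1)(r+2)}}=2^{-(r+2)^2},
\]
which is exactly the required constant, with no need for the converse inclusion. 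Alternatively, you can keep your parameter-based choice of scale. In that case you must supply the digit argument above, and then note that $\nu(x)=\nu(y)$ would force $|x-y|\le 2\cdot 2^{-n(r+1)}$ for every $n$, hence $x=y$.
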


\begin{proof}
Choose the smallest $n$ such that $|\nu_n(x)-\nu_n(y)|>1$.
Then $\nu(x)$ satisfies the inequalities
\begin{equation}\label{nu-ineq}
 2\cdot 2^{-(n-1)(r+2)} \ge |\nu(x)-\nu(y)| \ge 2^{-n(r+2)}
\end{equation}

In this case, the cells $Q_n(x)$ and $Q_n(y)$ are not neighbors, whereas
the order-$(n-1)$ cells $Q_{n-1}(x)$ and $Q_{n-1}(y)$ containing them are neighbors or coincide.
It follows from Lemma \ref{sosed-dist} that the $l_\infty$ distance between $x$ and $y$ satisfies
\begin{equation}\label{dist-ineq}
 2^{-n(r+1)-1} \le |x-y|\le 2\cdot 2^{-(r+1)(n-1)}
\end{equation}
Raising the inequalities in \eqref{nu-ineq} to the power $r+1$ and those in \eqref{dist-ineq} to the power $r+2$
gives the corresponding inequalities for the powered quantities.
The ratio of the left-hand sides is a power of two:
\begin{multline*}
-(n-1)(r+2)(r+1)+(r+1)+n(r+1)(r+2)+(r+2)=\\(r+1)(r+2)+(r+1)+(r+2)=(r+2)^2+r+1
\end{multline*}

The ratio of the right-hand sides is $2$ raised to the power
\begin{equation*}
-n(r+2)(r+1)-(r+2)+(r+2)(r+1)(n-1)=-(r+2)(r+1)-(r+2)=-(r+2)^2
\end{equation*}

The ratio of the middle terms lies between the ratios of the left- and right-hand sides.
\end{proof}

The map $\nu(x)$ continuously maps the Zigzag $\z_r^+$ onto the unit interval. Indeed,
the extreme points of the Zigzag map to the endpoints of the interval. The Zigzag is connected because every prototype is connected,
and the intersection of nested compact connected sets is connected.
Thus, the image of the Zigzag under $\nu$ is connected and therefore equals the unit interval.
The map $\nu$ is injective by Lemma \ref{hold-lem}; hence its inverse is continuous and has H\"older exponent
$\frac{r+1}{r+2}$ by the same lemma.

We denote the inverse of $\nu$, a map from the unit interval to the unit Zigzag $\z_r^+$, by
$\zeta_r^+(t)$ and call it the \emph{ascending structural parametrization} of $\z_r^+$.
By Lemma \ref{hold-lem}, this parametrization satisfies the following inequality for all $t,t\prime\in[0,1]$:
\begin{equation}\label{zig-param-ineq}
 |\zeta_r^+(t)-\zeta_r^+(t')|\le 2^{(r+2)}|t-t'|^{(r+1)/(r+2)}
\end{equation}

The discussion above can be summarized in the following theorem.
\begin{theorem} There is a H\"older-continuous homeomorphism between the unit interval and the unit $r$-fold Zigzag with H\"older exponent $(r+1)/(r+2)$; its inverse is H\"older-continuous with exponent $(r+2)/(r+1)$.
 \label{th-zigzag2}
\end{theorem}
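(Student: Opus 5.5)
We take the homeomorphism to be the ascending structural parametrization $\zeta_r^+=\nu^{-1}\colon[0,1]\to\z_r^+$. The statement of Theorem \ref{th-zigzag2} then reduces to three facts about $\nu$: it is continuous and surjective onto $[0,1]$, it is injective, and it satisfies the two-sided estimate of Lemma \ref{hold-lem}. Once these hold, $\nu$ is a continuous bijection from a compact set onto a Hausdorff space, so it is a homeomorphism.

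\emph{Continuity and injectivity.} From the right-hand inequality of Lemma \ref{hold-lem}, $|\nu(x)-\nu(y)|\le 2^{((r+2)^2+r+1)/(r+1)}|x-y|^{(r+2)/(r+1)}$. So $\nu$ is H\"older with exponent $(r+2)/(r+1)$, which is the exponent claimed for the inverse, and in particular $\nu$ is continuous. From the left-hand inequality, $|\nu(x)-\nu(y)|\ge 2^{-(r+2)^2/(r+1)}|x-y|^{(r+2)/(r+1)}$, so $\nu(x)=\nu(y)$ forces $x=y$. Raising this inequality to the power $(r+1)/(r+2)$ gives $|x-y|\le C|\nu(x)-\nu(y)|^{(r+1)/(r+2)}$. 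Substituting $x=\zeta_r^+(t)$ and $y=\zeta_r^+(t')$ yields the H\"older estimate \eqref{zig-param-ineq} for $\zeta_r^+$ with exponent $(r+1)/(r+2)$.

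\emph{Surjectivity.} Each prototype $Q(Z_r^{+n})$ is a connected chain of closed squares, and the prototypes are nested compacta, so their intersection $\z_r^+$ is compact and connected. The extreme points lie in every prototype. The lower-left point lies in the initial cell at every order, so $\nu_n=0$ for all $n$ and $\nu=0$ there. The upper-right point lies in the final cell at every order, so by \eqref{nu-lim} $\nu=\lim(2^{n(r+2)}-1)/2^{n(r+2)}=1$ there. The image of $\nu$ is therefore a connected subset of $[0,1]$ containing $0$ and $1$, hence equal to $[0,1]$.

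The main obstacle is Lemma \ref{hold-lem} itself, and inside it the lower bound on $|x-y|$ when $Q_n(x)$ and $Q_n(y)$ are not neighbors. That bound relies on Lemma \ref{sosed-dist}, which needs non-neighboring cells to be disjoint, i.e.\ the multiple-Zigzag versions of Lemmas \ref{frac-intersect} and \ref{zig-sosed}. The paper leaves the multiple case to the reader, so I would check it by induction on the order, as in the proof of Lemma \ref{zig-sosed}. The key point is that two intersecting same-order cells have intersecting extreme blocks, which reduces the question to the single-block case, where Lemmas \ref{proto1} and \ref{proto2} apply. A second technical point is the choice of $Q_n(x)$ as the farthest containing cell, which makes $\nu$ well defined at the points shared by neighboring cells. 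With that convention fixed, $|\nu_n(x)-\nu_n(y)|\le1$ holds exactly when the cells coincide or are neighbors, which is what the case split in Lemma \ref{hold-lem} needs.
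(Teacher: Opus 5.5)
Your proposal is correct and follows essentially the paper's own argument. The theorem is read off from the two-sided estimate of Lemma~\ref{hold-lem}, which gives continuity, injectivity and both H\"older bounds for $\nu$ and its inverse, and surjectivity comes from the connectedness of the Zigzag as a nested intersection of connected prototypes, with $\nu$ taking the values $0$ and $1$ at the extreme points; you are also right that the substantive work sits in Lemma~\ref{hold-lem}, through Lemma~\ref{sosed-dist} and the multiple-Zigzag versions of Lemmas~\ref{frac-intersect} and~\ref{zig-sosed}.
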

\section{Squiggle}

 \subsection{Combinatorics.}
We call the union of an ascending and a descending Zigzag of the same multiplicity, arranged so that
their circumscribed squares form a Cartesian rectangle $[\w]$, a \emph{Squiggle}. The lower half of the rectangle is the circumscribed square of the ascending Zigzag,
and the upper half is the circumscribed square of the descending Zigzag.
The multiplicity of the Squiggle is that of its two Zigzags. A Squiggle of multiplicity $r$ is called an
$r$-Squiggle and is denoted by $\w_r$.

\paragraph{Cells.}
The order-$n$ cells of a Squiggle, for $n>0$, are the order-$n$ cells that make up its Zigzags.
The two Zigzags forming a Squiggle are called its \emph{floors} and are regarded as order-zero cells.
The order of the cell $F$ is denoted by $\ord F$.

The following Boolean matrix represents the first-order cells of a single Squiggle
\begin{equation}\label{matr}
 \begin{matrix}
  1 & 0 & 0 & 0 \\
  1 & 0 & 1 & 1 \\
  1 & 1 & 0 & 1 \\
  0 & 0 & 0 & 1 \\
  0 & 1 & 1 & 1 \\
  0 & 1 & 0 & 0 \\
  0 & 0 & 1 & 0 \\
  1 & 1 & 1 & 0
 \end{matrix}
\end{equation}
The entries equal to one mark the positions of cells in the rectangle $[\w_1]$.

The horizontal and vertical positions of the circumscribed square of a cell $F$ in the circumscribed rectangle $[\w]$
are denoted by $\Col F$ and $\Row F$, respectively.
We use $\col F$ and $\row F$ for the positions of $F$ in the circumscribed rectangle $[\pr F]$ of its parent cell.
Thus, for every positive-order cell $F$
of the $r$-Squiggle, the following relations hold:
\begin{equation}\label{recur}
 \Row F=\row F+2^{r+1}\Row \pr F \qquad \Col F=\col F+2^{r+1}\Col \pr F
\end{equation}

For order-zero cells, we set $\col F=\Col F$ and $\row F=\Row F$.
With this convention, the positions of any cell of an $r$-Squiggle can be expressed as follows:

\begin{equation}\label{glob-loc}
 \Row F=\sum_{i=0}^{\ord F} 2^{i(r+1)} \row \pr^i F \qquad \Col F=\sum_{i=0}^{\ord F} 2^{i(r+1)} \col \pr^i F
\end{equation}

We are interested in the residues of cell positions modulo 2, 4, and 8.
We call these the binary, quaternary, and octal abscissas and ordinates of the cells, respectively.
Note that a cell has the same quaternary coordinates relative to the entire Squiggle and relative
to its parent cell.
We denote the quaternary abscissa and ordinate of $F$ by $\col_4 F$ and $\row_4 F$, respectively.
Its binary coordinates are denoted by $\col_2 F$ and $\row_2 F$; their Boolean sum (modulo 2),
denoted by $\odd F$, is called the \emph{parity} of the cell.

\begin{lem}\label{frac-zag-odd}
Every even cell of the Squiggle is an ascending Zigzag, and every odd cell is a descending Zigzag.
 \end{lem}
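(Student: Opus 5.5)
We reduce the statement to Lemma \ref{zig-frac-odd}, extended to $r$-fold Zigzags as the paper indicates, together with the parity-compatibility remark after Lemma \ref{3q}. The only new issue is that parity is now measured relative to the whole rectangle $[\w]$ rather than relative to the circumscribed square of the floor containing the cell. So we have to check that these two parities agree.

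\emph{Order-zero cells.} The lower floor has $\Col=\Row=0$, so it is even, and by definition it is an ascending Zigzag. The upper floor has $\Col=0$ and $\Row=1$, so it is odd, and by definition it is a descending Zigzag. The claim holds for both.

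\emph{Positive-order cells.} Let $F$ have order $n\ge1$ and lie in a floor $Q$. By \eqref{glob-loc}, $\Row F$ equals the vertical position of $[F]$ inside $Q$ plus $2^{n(r+1)}\Row Q$. The same holds for $\Col F$, with $\Col Q=0$. The floors are precisely the cells of the $1\times 2$ partition of $[\w]$ into squares of side $|Q|$. Hence the positions of $[F]$ in $[\w]$ are obtained exactly as in Lemma \ref{3q}, with multiplier $|Q|/|[F]|=2^{n(r+1)}$, which is even. Reducing modulo $2$:
\[
\odd F\equiv \col([F],Q)+\row([F],Q)+2^{n(r+1)}\Row Q\equiv \odd([F],Q)\pmod 2 .
\]
So the parity of $F$ in the Squiggle equals its parity as a cell of the $n$-fold prototype inscribed in $Q$.

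\emph{Case of the lower floor.} Here $Q$ carries the ascending Zigzag. The $r$-fold version of Lemma \ref{zig-frac-odd} says that even cells are homothetic to the ascending Zigzag and odd cells to the descending one. This is exactly the claim.

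\emph{Case of the upper floor.} This is the step that needs care, because $Q$ carries a \emph{descending} Zigzag. We use Lemma \ref{povorot}: the descending Zigzag is the $90^\circ$ rotation of the ascending one about the center of $Q$. This rotation maps the cells of the $n$-fold ascending prototype onto those of the descending prototype. It also swaps the diagonal types of cells, turning ascending cells into descending ones and conversely.

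We next need to know how the rotation acts on parity. It sends a cell with positions $(c,\rho)$ to the cell with positions $(N-1-\rho,\,c)$, where $N=2^{n(r+1)}$ is even. The sum $c+\rho$ therefore changes by the odd number $N-1-2\rho$. So the rotation flips parity relative to $Q$.

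Putting these together, in the descending Zigzag an even cell is the image of an odd cell of the ascending Zigzag. That odd cell is descending, so its rotated image is ascending. Likewise, odd cells of the descending Zigzag are the images of even, ascending cells, hence are descending. Combined with the parity identity above, the statement follows for the upper floor, and hence for every cell of the Squiggle.

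The main obstacle is this bookkeeping for the upper floor: verifying that the rotation reverses both parity and diagonal type. Alternatively, one can avoid the rotation by rerunning the homothety argument of Lemma \ref{zig-frac-odd} directly for descending prototypes. In that version, the first-level descending cells are read off from the matrix \eqref{matr}, and one checks that they have the asserted types.
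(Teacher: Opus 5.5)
Your proof is correct and follows the paper's route: you check the two floors directly (the lower floor is even and ascending, the upper floor odd and descending), and you reduce positive-order cells to Lemma~\ref{zig-frac-odd}. The differences are only that you make explicit two points the paper leaves implicit. First, parity relative to $[\w]$ agrees with parity relative to the floor because $2^{n(r+1)}$ is even. Second, you obtain the descending floor from the ascending case by the $90^\circ$ rotation, which needs the $n$-fold extension of Lemma~\ref{povorot} via Lemma~\ref{super-perp}; the paper instead applies Lemma~\ref{zig-frac-odd}, stated for either kind of Zigzag, directly to the upper floor.
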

\begin{proof}
Indeed, the claim holds for order-zero cells because the lower floor is even and ascending,
whereas the upper floor is odd and descending.
For cells of positive order, the claim follows from Lemma \ref{zig-frac-odd} for Zigzags.
\end{proof}

\paragraph{Columns.}
A set of cells in a Squiggle that have the same order and horizontal projection
is called a \emph{column}. We denote the column containing a cell $F$ by $C(F)$.

The order of a column $C$, denoted by $\ord C$, is the order of its cells.
A column of order zero has two rows.

For a column $C$, let $\pr C$ denote the parent column,
consisting of the parent cells $\pr F$ of all cells $F$ in $C$.

For a column $C$, $\Col C$ (respectively, $\col C$) denotes the horizontal position of its cells in the whole Squiggle (respectively, in their parent cells).
of its constituent cells.

The parity of a column is its binary abscissa, denoted by $\odd C$.
By Lemma \ref{frac-zag-odd},
in an even column each ascending cell has even vertical position and each descending cell has odd vertical position; in an odd column the reverse is true.

\begin{lem}\label{zag-tip}
 If $C(F)=C(F')$ and $\row_4 F=\row_4 F'$, then $\odd\pr F=\odd \pr F'$ and
$$\Row F=\Row F'\bmod 2^{r+2}.$$
\end{lem}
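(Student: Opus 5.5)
The idea is to reduce both claims to one combinatorial fact about the first-order prototype inside a parent cell: given the position of a cell in the $4\times4$ grid of its block box, one can tell whether the block is ascending or descending.

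\emph{Step 1: reduction.} Let $F,F'$ lie in a common column $C$, so $\ord F=\ord F'$ and $\Col F=\Col F'$. Then $\Col\pr F=\Col\pr F'$ and $\col F=\col F'$. Indeed, $\pr C$ is a column, and the positions \eqref{recur} split uniquely into a local part in $[0,2^{r+1})$ and a parent part.

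Since $0\le\row F<2^{r+1}$, relation \eqref{recur} gives
$$\Row F\bmod 2^{r+2}=\row F+2^{r+1}(\Row\pr F\bmod 2).$$
Also $\odd\pr F=\Row\pr F+\Col\pr F \bmod 2$, and $\Col\pr F=\Col\pr F'$. So both conclusions of the lemma follow once we prove two things: $\odd\pr F=\odd\pr F'$, and $\row F=\row F'$.

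\emph{Step 2: structure inside a parent.} By Lemma \ref{frac-zag-odd}, $F$ is a cell of the $r$-Zigzag prototype inscribed in $[\pr F]$. This prototype is ascending if $\pr F$ is even and descending if it is odd. (For order-one cells the parent is a floor, and the floors also obey this rule.)

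The prototype consists of eight blocks. Each block is the set $B(Z^\pm)$ of a Zigzag prototype inscribed in a diagonal block box $B$ of side $4$ cells. Because $Z^\pm$ is symmetric, this set does not depend on the initial vertex. Block boxes start at positions divisible by $4$, so the position of $F$ inside its box is exactly $(\col_4 F,\row_4 F)$.

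Each column of the parent's $2^{r+1}\times 2^{r+1}$ partition meets exactly one diagonal block box. That box is determined by $\col F\div 4$ together with the diagonal type of the parent. Hence
$$\row F=4b+\row_4F,$$
where $b=\col F\div 4$ for an ascending parent and $b=2^{r-1}-1-\col F\div 4$ for a descending one. It follows that if the parents have equal parity, then $\row F=\row F'$.

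\emph{Step 3: parity of the parent is determined by quaternary coordinates (the key check).} It remains to show that $(\col_4F,\row_4F)$ determines the diagonal type of the block containing $F$. We read this off from the two $4\times4$ matrices of the Zigzag prototype. With rows counted from the bottom, the rows occupied in columns $0,1,2,3$ are:
\begin{itemize}
\item ascending prototype: $\{0\},\ \{0,2,3\},\ \{0,1,3\},\ \{3\}$;
\item descending prototype: $\{1,2,3\},\ \{1\},\ \{2\},\ \{0,1,2\}$.
\end{itemize}
In each column the two sets are disjoint. Therefore a cell at quaternary position $(c,s)$ can occur in a block of only one diagonal type.

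Since $\col_4F=\col_4F'$ and $\row_4F=\row_4F'$, the blocks containing $F$ and $F'$ have the same type. So the prototypes in $[\pr F]$ and $[\pr F']$ have the same type, and hence $\odd\pr F=\odd\pr F'$. Step 2 then gives $\row F=\row F'$, and Step 1 completes the proof.

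The only real content is the disjointness check in Step 3; everything else is bookkeeping with \eqref{recur}.
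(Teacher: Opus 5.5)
Your proof is correct and follows the same route as the paper's: equal quaternary position inside the block box forces the same block type, hence the same parent diagonal type, hence equal parent parity, hence $\row F=\row F'$, hence the congruence modulo $2^{r+2}$ via \eqref{recur}. You also make explicit two steps the paper leaves implicit: the column-by-column disjointness of the ascending and descending $4\times4$ prototypes, which justifies its ``hence the blocks have the same diagonal type,'' and the use of $\Col\pr F=\Col\pr F'$ to turn equal parent parity into $\Row\pr F\equiv\Row\pr F' \pmod 2$ (one harmless slip: the $r$-prototype has $2^{r-1}$ blocks of eight cells, not eight blocks, though your formula for $b$ already uses the correct count).
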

 \begin{proof}
Consider the block boxes of these cells. Equality of the quaternary ordinates of $F$ and $F\prime$ implies equality of their ordinates
relative to their block boxes. Hence the blocks have the same diagonal type, as do the parent cells, and the parent cells have the same parity.
Consequently, $\row F=\row F\prime$.

Since
equality of the parent-cell parities implies $\Row F=\Row F\prime\bmod 2^{r+2}$.
 \end{proof}

 \begin{lem}\label{stolb4}
Every column of first-order cells in a Squiggle of any multiplicity contains exactly four first-order cells with
distinct quaternary ordinates.
\end{lem}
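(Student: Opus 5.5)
The plan is a direct count, first inside a single block and then column by column in the whole Squiggle. Fix a column of first-order cells, say at horizontal position $c$ in $[\w_r]$. Set $j=c \bmod 4$. The rectangle $[\w_r]$ has width $2^{r+1}$ and height $2^{r+2}$, and its first-order cells are the cells of the partition of each floor into squares of side $2^{-(r+1)}$ times the side of the floor. By \eqref{r-proto} and the description of blocks, the first-order cells of the lower (ascending) floor are the eight-cell Zigzag prototypes $Z^+$ inscribed in the $2^{r-1}$ block boxes on the ascending diagonal of the lower square. Each block box is a $4\times4$ group of cells. Likewise, the first-order cells of the upper floor are the prototypes $Z^-$ in the block boxes on the descending diagonal of the upper square.

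First, I would show that the column meets exactly one block box of each floor, and at horizontal position $j$ inside that box. The block boxes on a diagonal have pairwise disjoint horizontal projections, and these projections cover the side of the floor. Their left edges sit at positions divisible by $4$, so the column crosses exactly one box per floor, at local abscissa $j$. All vertical boundaries of block boxes are also at multiples of $4$, in both floors. The upper floor is shifted by $2^{r+1}$, which is divisible by $4$. Hence the row of a cell inside its block box equals its quaternary ordinate $\row_4$ (the remark before Lemma~\ref{zag-tip} also gives this).

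Second, I would read off the columns of the two $4\times4$ matrices given for the Zigzag prototype. Rows are counted from the bottom.
\begin{itemize}
\item In the ascending prototype (code $Z^+$), the local columns $0,1,2,3$ contain cells in rows $\{0\}$, $\{0,2,3\}$, $\{0,1,3\}$ and $\{3\}$, respectively.
\item In the descending prototype (code $Z^-$), the local columns $0,1,2,3$ contain cells in rows $\{1,2,3\}$, $\{1\}$, $\{2\}$ and $\{0,1,2\}$, respectively.
\end{itemize}
For each $j$, the ascending row set and the descending row set are disjoint, and their union is $\{0,1,2,3\}$. So the column contains exactly four first-order cells, and their quaternary ordinates are distinct. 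As a sanity check, these values can be compared with the Boolean matrix \eqref{matr} for $r=1$.

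The only point needing care is the bookkeeping in the first step. One must make sure that the block boxes of each floor really tile the horizontal range, one box per group of four columns, with the same alignment mod $4$ in both floors. One must also check that the orientation of the prototypes inside the boxes matches the matrices as drawn. For the descending floor, this orientation follows from Lemma~\ref{povorot}. Because the codes are symmetric, the reversed traversal does not change the cell sets. Everything else is a finite check of the eight cases above.
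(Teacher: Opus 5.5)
Your proposal is correct and follows essentially the paper's argument: each column meets exactly one block box per floor (an ascending block below, a descending one above), and the claim then reduces to the $r=1$ case that the paper reads off matrix \eqref{matr}. Your explicit row sets for the two $4\times4$ prototypes are right, and your check that block-box boundaries and the floor offset $2^{r+1}$ are multiples of $4$ properly justifies identifying local rows with quaternary ordinates.
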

\begin{proof}
The prototype of a multiple Zigzag consists of several Zigzag blocks arranged along the diagonal of the circumscribed square.
Each column of the multiple Zigzag intersects exactly one block box. The diagonal type of the block
matches that of the Zigzag. Thus, a column of the Squiggle intersects its upper floor in a block of descending type and its lower floor in a block of ascending type.

The lemma therefore holds for a Squiggle of any multiplicity if it holds for a single Squiggle,
as is clear from matrix \eqref{matr}.
\end{proof}

 \begin{lem}\label{odd=even}
Every column of an order-$n$ Squiggle contains $2^n$ even cells and $2^n$ odd cells.
\end{lem}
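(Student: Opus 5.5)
Interpret "order-$n$ Squiggle" as the column of order-$n$ cells of the Squiggle. The plan is induction on $n$, with the total count $2^{n+1}$ cells per column coming from Lemma \ref{stolb4}.

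For $n=0$ there is a single column $C$ with two cells, the lower and upper floors. By Lemma \ref{frac-zag-odd} the lower floor is even and the upper floor is odd, so the column has one cell of each parity, as required.

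For the inductive step, let $C$ be a column of order $n+1$ and let $\pr C$ be its parent column. By induction, $\pr C$ has $2^n$ even and $2^n$ odd cells. Each cell $G$ of $\pr C$ is a Zigzag, ascending if $G$ is even and descending if it is odd. The cells of $C$ inside $G$ form one column of the first-order partition of $G$. By the block structure used in Lemma \ref{stolb4}, this column meets exactly one block box of $G$, and it contains exactly two first-order cells of $G$. This is the key point: a Zigzag prototype column (the matrices for $Z^\pm$) contains exactly two cells. Checking rows of matrix \eqref{matr} column-wise confirms it, since each column of the $8\times4$ matrix has four ones, two in each floor.

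Next I will show that these two cells of $G$ have opposite parity relative to the whole Squiggle. By Lemma \ref{3q} and the fact that $2^{r+1}$ is even, parity relative to the Squiggle equals parity relative to the parent cell $G$ plus the parity of $G$. It therefore suffices to show that the two cells in one column of a single Zigzag prototype (inscribed in its block box) have opposite parity. Within a column their $\col$ coincides, so I need their rows to have opposite parity. From the prototype matrices, the column pairs are rows $\{0,2\}$? This must be checked. In the ascending matrix, column 0 has only cell 0, so the claim fails per Zigzag. That is the main obstacle.

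The correct route is to pair across the two floors rather than within one Zigzag. I will use Lemma \ref{stolb4}: the four cells of a first-order column have distinct quaternary ordinates, hence two have even ordinate and two have odd ordinate, giving two even and two odd cells. For higher $n$, I instead induct on the parent column by grouping cells of $C$ by their grandparent-level structure. Take the cells of $\pr C$ in pairs (even $G$, odd $G'$) given by induction. The cells of $C$ inside an ascending $G$ together with those inside a descending $G'$ reproduce, up to homothety and translation, a first-order Squiggle column. This holds because an ascending Zigzag stacked with a descending one is exactly the Squiggle configuration, and the column positions inside $G$ and $G'$ agree since $\col$ is the same for both. Lemma \ref{stolb4} then gives two even and two odd cells relative to $G,G'$. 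By the parity-additivity remark after Lemma \ref{3q}, this translates to global parities shifted by $\odd G=0$ and $\odd G'=1$. Since the shift is by a fixed amount within each pair and the pair count balances, each pair contributes two even and two odd cells. Summing over the $2^n$ pairs yields $2^{n+1}$ even and $2^{n+1}$ odd cells.

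The step I expect to need the most care is justifying that the pair $(G,G')$, which need not be vertically adjacent, still behaves like a Squiggle column for parity purposes. I would handle this with Lemma \ref{zag-tip}, or directly via \eqref{recur}, noting that only the parent parities and the local rows enter the parity computation.
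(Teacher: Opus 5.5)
Your final route (induct on the order, pair each even cell $G$ of $\pr C$ with an odd cell $G'$, observe that the cells of $C$ in $G\cup G'$ form a copy of a first-order Squiggle column, and apply Lemma~\ref{stolb4}) is the paper's argument. The step that turns this into a parity count, however, is wrong.

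You assert that, by Lemma~\ref{3q}, the parity of a cell relative to the Squiggle is its parity relative to its parent plus $\odd G$. Lemma~\ref{3q} gives the opposite. By \eqref{recur}, $\Row F=\row F+2^{r+1}\Row\pr F$ and $\Col F=\col F+2^{r+1}\Col\pr F$. Since $2^{r+1}$ is even, the parent's position vanishes modulo $2$, so $\odd F=(\col F+\row F)\bmod 2$ and the parent contributes nothing.

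Your rule is not harmless: it makes your conclusion false. Already in the step from $n=0$ to $n=1$, take the leftmost column. The lower (ascending) floor contributes one cell with local row $0$, and the upper (descending) floor contributes three cells with local rows $3,2,1$. The local parities are $0;\,1,0,1$. Adding $\odd G'=1$ to the last three gives three even cells and one odd. Your sentence ``the shift is by a fixed amount within each pair'' also contradicts your own rule, since the shift is $0$ on $G$ and $1$ on $G'$. A shift confined to $G'$ would preserve the balance only if the cells in $G'$ split evenly between the two parities. That never happens: the four cells always divide between $G$ and $G'$ as $1+3$ or $3+1$. For the same reason, your claim ``two in each floor'' about \eqref{matr} is false.

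The repair is to drop the shift altogether. The four cells of $C$ in $G\cup G'$ have the same local abscissa $\col F$, because $G$ and $G'$ lie in the same column of order $n$. By Lemma~\ref{stolb4}, their local ordinates have distinct residues modulo $4$, so two are even and two are odd. Since $\odd F=(\col F+\row F)\bmod 2$, each pair contributes two even and two odd cells, and summing over the $2^n$ pairs gives $2^{n+1}$ of each. With this correction, and with the abandoned first attempt deleted (a column of order $n+1$ meets a single parent cell in one or three cells, never two), your proof coincides with the paper's.
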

\begin{proof}
We prove the claim by induction on the order of the column. The base case is an order-zero column,
which consists of two cells of opposite parity.
Assume the claim holds for columns of order $n$, and consider a column $C$ of order $n+1$.
By the induction hypothesis, the parent column $\pr C$ contains $2^n$ pairs of cells of opposite parity.
Each cell of $\pr C$ is crossed by one block of the same type as that cell.
Thus, column $C$ intersects $2^n$ ascending blocks and $2^n$ descending blocks,
each made up of order-$n$ cells. By Lemma \ref{stolb4}, the intersection of the column with each ascending-descending block pair
consists of four cells with distinct quaternary ordinates.
Each such pair therefore contains cells of opposite parity.
\end{proof}

\paragraph{Blocks}

We denote the block containing a Squiggle cell $F$ by $B(F)$.
The positions of a block are the positions of its circumscribed rectangle $[B(F)]$ in the circumscribed rectangle of the Squiggle.
The block positions are the integer parts of the corresponding positions of the cell $F$ divided by four.
The parity of a block is denoted by $\odd B(F)=\Col B(F)+\Row B(F)$. For a single Squiggle,
$[B(F)]=[\pr F]$, so $\col B(F)=\row B(F)=0$ for every cell.

\begin{lem}\label{bloc-odd} For every positive-order cell $F$ of a Squiggle, $\odd B(F)=\odd\pr F$.

\end{lem}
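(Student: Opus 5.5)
Set $F$ a positive-order cell, $P=\pr F$, and $B=B(F)$. The plan is to express the global positions of the block $[B]$ through those of the parent $P$ using the recurrence \eqref{recur}, and then compare parities. By definition, $\Row B=\lfloor \Row F/4\rfloor$ and $\Col B=\lfloor \Col F/4\rfloor$. By \eqref{recur}, $\Row F=\row F+2^{r+1}\Row P$, and similarly for columns. Since $r\ge1$, $2^{r+1}$ is divisible by $4$, so
$$\Row B=\lfloor \row F/4\rfloor+2^{r-1}\Row P,\qquad \Col B=\lfloor \col F/4\rfloor+2^{r-1}\Col P.$$
Here $\lfloor \row F/4\rfloor$ and $\lfloor \col F/4\rfloor$ are the positions of the block box of $F$ in the partition $[P]/(2^{r-1}\times 2^{r-1})$.

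Next we use the structure of the $r$-fold prototype described after \eqref{r-proto}: the block boxes are diagonal cells of $[P]/(2^{r-1}\times2^{r-1})$. We distinguish two cases.

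\emph{Case $P$ even.} By Lemma \ref{frac-zag-odd} (and Lemma \ref{zig-frac-odd}), $P$ is an ascending Zigzag, so its block boxes lie on the ascending diagonal. Hence $\lfloor \row F/4\rfloor=\lfloor\col F/4\rfloor=k$ for some $k$, and the local contribution to $\Row B+\Col B$ is $2k$, which is even.

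\emph{Case $P$ odd.} Then $P$ is descending and the block boxes lie on the descending diagonal, so $\lfloor \row F/4\rfloor+\lfloor\col F/4\rfloor=2^{r-1}-1$. This is odd when $r=1$ and also odd for $r\ge2$, since $2^{r-1}-1$ is odd whenever $2^{r-1}$ is even. For $r=1$ it equals $0$, so this subcase needs care. Equivalently, in the language of Lemma \ref{odd-even}: ascending-diagonal cells are even, and descending-diagonal cells of an even partition are odd. The partition into block boxes is even exactly when $r\ge2$.

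For the global term, the contribution $2^{r-1}(\Row P+\Col P)$ is even when $r\ge2$. Then $\odd B=\odd$ of the local term, which is $0$ for even $P$ and $1$ for odd $P$, so $\odd B=\odd P$. For $r=1$, the paper's remark right before the lemma applies: for a single Squiggle, $[B(F)]=[\pr F]$, so the block is the parent box itself and $\Row B=\Row P$, $\Col B=\Col P$. The identity $\odd B=\odd P$ is then immediate.

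The main obstacle is this dichotomy. The descending-diagonal parity argument fails for the odd-sized (here $1\times1$) partition when $r=1$, so that case has to be handled separately via the identification $[B]=[P]$. We also need the fact that the diagonal type of the block boxes inside $P$ matches the parity of $P$, which requires citing Lemma \ref{frac-zag-odd} for $P$ itself.
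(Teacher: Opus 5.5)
Your argument is correct and is essentially the paper's: the parity of $\pr F$ fixes its diagonal type (Lemma~\ref{frac-zag-odd}). That determines the diagonal of $[\pr F]$ on which the block box lies, and hence the parity of the block; you additionally make explicit the reduction $\Row B(F)=\lfloor \row F/4\rfloor+2^{r-1}\Row \pr F$ from global to local positions and the separate case $r=1$ (where $[B(F)]=[\pr F]$), both of which the paper's one-line proof leaves implicit. The only blemish is the slip asserting that $2^{r-1}-1$ is odd when $r=1$; you contradict it in the next sentence, and your separate treatment of $r=1$ already makes it harmless.
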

\begin{proof}
The block and its parent cell have the same diagonal type.
The diagonal type of a cell is determined by its parity: even cells have even blocks because
they lie on an ascending diagonal, whereas odd cells lie on a descending diagonal and therefore have odd blocks.
\end{proof}

\begin{lem} For an $r$-Squiggle, the horizontal and vertical positions of the block $B(F)$ within its parent cell $\pr F$ satisfy the following relations:
$\col B(F)=\row B(F)$ if $B(F)$ is even, and $\col B(F)+\row B(F)=2^{r-1}-1$ if it is odd.
\label{blok-diag}
\end{lem}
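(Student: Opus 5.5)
The plan is to read the statement as a claim about where the block boxes sit inside the parent cell. First I fix conventions. For a positive-order cell $F$, the block $B(F)$ is one of the eight blocks of the $r$-fold Zigzag prototype inscribed in $[\pr F]$. Its box is a cell of the partition $[\pr F]/(2^{r-1}\times 2^{r-1})$, and $\col B(F)$, $\row B(F)$ denote the positions of this box in that partition, which lie in $\{0,\dots,2^{r-1}-1\}$. By the definition of the $r$-fold prototype through the code $Z_r^\pm=\times_r\circ Z^\pm$, the block boxes are exactly the diagonal cells of this partition. For the ascending prototype they are the cells on the ascending diagonal; for the descending prototype they are the cells on the descending diagonal. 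This also follows from Lemma \ref{povorot}, since a rotation by $90^\circ$ exchanges the two diagonals.

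The first step links the parity of the block to the type of the prototype. By Lemma \ref{bloc-odd}, $\odd B(F)=\odd\pr F$. By Lemma \ref{frac-zag-odd} (together with Lemma \ref{zig-frac-odd}), an even parent cell is an ascending Zigzag and an odd parent cell is a descending Zigzag. So an even block lies in an ascending prototype and its box is on the ascending diagonal of $[\pr F]$. An odd block lies in a descending prototype and its box is on the descending diagonal.

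The second step is to read off the coordinates of diagonal cells in an $N\times N$ partition, with $N=2^{r-1}$. A cell lies on the ascending diagonal exactly when its column and row positions coincide. It lies on the descending diagonal exactly when they sum to $N-1$, because the descending diagonal runs from position $(0,N-1)$ to $(N-1,0)$. Substituting gives $\col B(F)=\row B(F)$ in the even case and $\col B(F)+\row B(F)=2^{r-1}-1$ in the odd case.

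The main obstacle is bookkeeping rather than mathematics. One must make sure that "even block" in the statement means the parity $\odd B(F)$ defined through the global positions, and that this matches the local diagonal type. Lemma \ref{bloc-odd} does exactly this, with Lemma \ref{odd-even} as a consistency check on parity along the diagonals. The case $r=1$ is degenerate, since there $\col B(F)=\row B(F)=0$ and $2^{r-1}-1=0$, so both relations hold trivially.
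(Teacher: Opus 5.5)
Your proposal is correct and follows the same route as the paper. Lemma \ref{bloc-odd} transfers the block's parity to the parent cell, and Lemma \ref{frac-zag-odd} turns that parity into the diagonal type of the prototype in $[\pr F]$, so the block box lies on the corresponding diagonal of the $2^{r-1}\times 2^{r-1}$ partition. You also spell out the final translation into $\col B(F)=\row B(F)$ or $\col B(F)+\row B(F)=2^{r-1}-1$, and the degenerate case $r=1$, both of which the paper leaves implicit.
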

\begin{proof}
 Indeed, if the block is even, then the parent cell is even, therefore ascending, and the block lies on the ascending diagonal.
 If the block is odd, then the parent cell is descending, and it lies on the descending diagonal.
\end{proof}

The horizontal and vertical positions of a block can therefore be expressed by a single formula valid for every multiplicity of the Squiggle:
\begin{equation}\label{block-pos}
 \col B(F)=\row B(F)+(2^{r-1}-1-2\row B(F))\odd B(F)
 \end{equation}

 \begin{lem}\label{blok-stolb}
Let two blocks of cells of the same order intersect the same column of a Squiggle.
Then the blocks have the same parity if and only if they have the same diagonal type.
\end{lem}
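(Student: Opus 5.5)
The idea is to reduce both parity and diagonal type of a block to its position inside the parent cell, and then compare two blocks that meet the same column. By Lemma \ref{bloc-odd} and its proof, the parity of a block $B(F)$ equals the parity of its parent cell $\pr F$. By Lemma \ref{frac-zag-odd}, this parity determines whether $\pr F$ is an ascending or a descending Zigzag. The blocks of an $r$-Zigzag prototype have the diagonal type of the Zigzag in which they sit. So a block's diagonal type is ascending exactly when its parent cell is even. If ``parity of a block'' is read as the global $\odd B(F)=\Col B(F)+\Row B(F)$, as in the definition, then Lemma \ref{bloc-odd} already identifies it with $\odd\pr F$, hence with the diagonal type. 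On that reading the equivalence holds for every pair of blocks, and the hypothesis that they meet a common column is not needed.

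The column hypothesis matters if parity is meant relative to the column structure, for instance the parity of the block's vertical position $\Row B(F)$ compared between two blocks with the same horizontal projection. I will treat that version as the real content. Suppose blocks $B_1=B(F_1)$ and $B_2=B(F_2)$ of the same order meet the column $C$. Each column of the parent order meets exactly one block box inside each parent cell; this is the argument of Lemma \ref{stolb4}. Hence $B_1$ and $B_2$ lie in parent cells $\pr F_1$ and $\pr F_2$ from the same parent column $\pr C$. Consequently $\Col B_1=\Col B_2$. For $r\ge2$ this follows from \eqref{recur} together with $\col B(F_1)=\col B(F_2)$, since both equal the index of the column inside the parent. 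For $r=1$ it is trivial, because each block is the whole parent cell.

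Given equal horizontal positions, $\odd B_1=\odd B_2$ holds if and only if $\Row B_1\equiv \Row B_2 \pmod 2$. Combined with $\odd B_i=\odd\pr F_i$ from Lemma \ref{bloc-odd}, this shows that equal parity is equivalent to the parent cells having equal parity. By Lemma \ref{frac-zag-odd}, that in turn is equivalent to the blocks having the same diagonal type. For the block's position inside its parent I will use \eqref{block-pos}, which gives $\row B$ explicitly from $\col B$ and $\odd B$ as a consistency check.

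The main obstacle is the bookkeeping step showing that both blocks share the same global horizontal position $\Col B$. It requires that column intersection at the finer order forces the parents to lie in one parent column and forces the same block column inside each parent, and the scaling factors $2^{r+1}$ from \eqref{glob-loc} must be handled correctly. I would verify this first on the block-box subdivision $Q/(2^{r-1}\times 2^{r-1})$ of a single parent cell, and then lift it via \eqref{recur}.
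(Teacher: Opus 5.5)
Your first paragraph is already a complete proof and is essentially the paper's argument. By Lemma~\ref{bloc-odd}, $\odd B(F)=\odd\pr F$; the parity of $\pr F$ fixes its diagonal type by Lemma~\ref{frac-zag-odd}; and the block inherits that type. You are also right that the column hypothesis is not really needed: the paper's appeal to ``cells of the same parity in a column'' is just Lemma~\ref{frac-zag-odd}, which holds globally.

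The second, relative-parity reading is unnecessary. Its announced main obstacle is also immediate, because $\Col B(F)=\lfloor \Col F/4\rfloor$ by definition and cells of one column share $\Col F$.
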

\begin{proof}
The parities of the parent cells of the blocks agree with the parities of the blocks.
The parent cells belong to the same column of the Squiggle, and cells of the same parity in a column
have the same diagonal type. The same is therefore true of the corresponding blocks.
 \end{proof}

\begin{lem} \label{8-spectr}
Every column of a Squiggle contains cells with all possible quaternary ordinates.
\end{lem}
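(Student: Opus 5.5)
Every column of a Squiggle is a column of some order $n$, and I will argue by induction on $n$ that it contains cells with each quaternary ordinate $0,1,2,3$.

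\textbf{Base case $n=0$.} An order-zero column consists of the two floors, with vertical positions $0$ and $1$. Ordinates $2$ and $3$ do not literally occur at this level. I therefore expect the statement is meant for columns of positive order, and I will treat that case as the real content.

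\textbf{Case $n=1$.} This is exactly Lemma \ref{stolb4}: a first-order column contains four cells with distinct quaternary ordinates, so all four residues occur.

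\textbf{Inductive step, $n\ge 2$.} Let $C$ be a column of order $n$, and consider its parent column $\pr C$ of order $n-1\ge1$.
\begin{itemize}
\item By Lemma \ref{odd=even}, $\pr C$ contains both an even and an odd cell.
\item Pick an even cell $G\in\pr C$. By Lemma \ref{frac-zag-odd}, $G$ is homothetic to an ascending $r$-Zigzag.
\item The part of $C$ inside $G$ is a column of that Zigzag's first-order subdivision, which lies in a single block box and meets an ascending block.
\item Likewise, an odd cell $G'\in\pr C$ contributes a descending block crossing $C$.
\end{itemize}
The two blocks are images under homothety of the two floors of a single Squiggle column. The key subtlety is the homothety into $G$ and $G'$: their local quaternary ordinates are only residues of $\row F$. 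Since $2^{r+1}$ is divisible by $4$, equation \eqref{recur} gives $\Row F\equiv \row F \pmod 4$, so the quaternary ordinate is local, as the paper notes. The cells of $C$ inside $G\cup G'$ therefore have local rows realizing the pattern of a first-order Squiggle column. By Lemma \ref{stolb4} (through matrix \eqref{matr}), the ascending and descending blocks together supply four cells with distinct quaternary ordinates, which completes the step.

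\textbf{Main obstacle.} The delicate point is justifying that an ascending block and a descending block in one column, coming from different parents $G$ and $G'$, reproduce the combined pattern of matrix \eqref{matr}. Each block is positioned inside its own parent, at horizontal position $\col B$ given by \eqref{block-pos}. I must check that, within one column, both blocks occupy the same local column offset modulo $4$, so that the ascending and descending column patterns of the first-order Squiggle are read at the same abscissa. I would verify this by noting that $\col F \bmod 4$ is determined by $\Col C$ alone, since $\Col F\equiv\col F\pmod 4$. Then the prototype rows of the ascending and descending Zigzag at a fixed local abscissa are exactly the lower and upper halves of the corresponding column of \eqref{matr}. That column contains all four residues, again by Lemma \ref{stolb4}.
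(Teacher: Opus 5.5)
Your proposal is correct and follows essentially the paper's own argument. First-order columns are handled by Lemma~\ref{stolb4}; for higher order, Lemma~\ref{odd=even} supplies an even and an odd parent cell in $\pr C$, and their ascending and descending blocks meet $C$ in the pattern of a column of matrix~\eqref{matr}. Two details you make explicit are left implicit in the paper: the check that $\col F\equiv\Col F \pmod 4$, so both blocks are read at the same local abscissa, and the remark that the statement is only meant for columns of positive order.
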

\begin{proof}
For first-order columns, this follows from Lemma \ref{stolb4}. If a column has order greater than one,
its parent column contains cells of both types by Lemma \ref{odd=even}.
Choose cells $F$ and $F\prime$ in column $C$ with different parent cells.
Their blocks have different diagonal types and parities by Lemma \ref{blok-stolb}.
The quaternary positions within these blocks are the same as those of first-order cells of a single Squiggle,
so Lemma \ref{stolb4} implies that
$C$ intersects $B(F)\cup B(F\prime)$ in four cells with distinct quaternary ordinates.
\end{proof}

 \subsection{Arithmetic.}

A four-element set of integers with pairwise distinct residues modulo four is called a \emph{quartet}.

 \begin{lem}
\label{arif1} Let $(S_0,\dots,S_r)$ be a set of quartets, let $(b_0,\dots,b_r)$ be a Boolean vector, and let $Y$ be an integer with parity $b_0$. Then the equation
 \begin{equation}\label{equat}
  \sum_{i=0}^{r} 2^i x_i=Y \bmod 2^{r+2}
 \end{equation}
 has a unique solution provided $x_i\in S_i$, $x_i\bmod2=b_i$ for any $i\in [0,r]$.
 \end{lem}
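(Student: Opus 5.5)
The plan is a counting argument: show that the admissible choices are exactly as many as the admissible residues $Y$, and that distinct choices give distinct residues. Injectivity plus equal cardinalities then yields existence and uniqueness together.

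\emph{Step 1: counting the admissible choices.} Since a quartet $S_i$ has pairwise distinct residues modulo four, exactly two of its elements have parity $b_i$. These two elements differ by $2$ modulo $4$. Hence the number of admissible vectors $(x_0,\dots,x_r)$ with $x_i\in S_i$ and $x_i\bmod 2=b_i$ is $2^{r+1}$.

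\emph{Step 2: counting the residues and checking parity.} For any admissible vector, the left-hand side of \eqref{equat} is congruent modulo $2$ to $x_0$, hence has parity $b_0$. The residues modulo $2^{r+2}$ of parity $b_0$ number exactly $2^{r+1}$. So the map
$$(x_0,\dots,x_r)\mapsto \sum_{i=0}^{r}2^ix_i \bmod 2^{r+2}$$
sends the admissible set into the set of residues of parity $b_0$, and both sets have $2^{r+1}$ elements. It therefore suffices to prove that this map is injective. It is then a bijection, which gives both existence and uniqueness of a solution for every $Y$ of parity $b_0$.

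\emph{Step 3: injectivity (the only real step).} Suppose two admissible vectors $x$ and $x'$ are distinct, and let $j$ be the smallest index with $x_j\ne x'_j$. Then $x_j-x'_j\equiv 2 \pmod 4$, so $2^j(x_j-x'_j)\equiv 2^{j+1}\pmod{2^{j+2}}$. For every $i>j$, the difference $x_i-x'_i$ is either $0$ or $\equiv 2\pmod 4$; in both cases it is even. Hence $2^i(x_i-x'_i)$ is divisible by $2^{i+1}$, and so by $2^{j+2}$. Terms with $i<j$ vanish. Therefore
$$\sum_{i=0}^{r} 2^i(x_i-x'_i)\equiv 2^{j+1}\pmod{2^{j+2}}.$$
Because $j\le r$, we have $j+2\le r+2$, so this difference is not divisible by $2^{r+2}$. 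Thus the two sums are distinct modulo $2^{r+2}$.

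I expect no serious obstacle. The one point needing care is the observation that equal parities within a quartet force a difference of exactly $2$ modulo $4$. That is what makes the lowest differing index control the $2$-adic valuation of the difference.
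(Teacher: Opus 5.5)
Your proof is correct. It takes a different route from the paper's proof of this lemma, though not from the paper as a whole.

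The paper solves the congruence digit by digit. It fixes $x_0,x_1,\dots$ in turn. At stage $k$ it reads off $x_k \bmod 4$ from the congruence modulo $2^{k+2}$; the next unknown $x_{k+1}$ enters that congruence only through its prescribed parity $b_{k+1}$. The last step reads off $x_r \bmod 4$ from the full congruence modulo $2^{r+2}$. Since each stage admits exactly one element of $S_k$, existence and uniqueness come out together.

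You instead prove that $x\mapsto\sum 2^i x_i \bmod 2^{r+2}$ is injective on the $2^{r+1}$ admissible vectors, using the $2$-adic valuation of a difference: the lowest differing index $j$ forces valuation exactly $j+1\le r+1$. You then get existence by comparing with the $2^{r+1}$ residues of parity $b_0$.

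The paper's route is constructive. It is an explicit $(r+1)$-step algorithm, in keeping with the claim that preimages under $\alpha_n$ can be computed efficiently, and it is the same lifting mechanism that drives the induction in Lemma~\ref{frac-seq}.

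Your route has several advantages:
\begin{itemize}
\item It is shorter.
\item It avoids the index bookkeeping of the inductive step, which is somewhat garbled in the paper. For instance, the displayed formula for $[x_n/2]$ omits the $2^n b_n$ term.
\item It makes clear why parity $b_0$ is exactly the right hypothesis on $Y$: the image of the map is precisely that set of residues.
\end{itemize}
It is also the injectivity-plus-counting scheme the paper itself uses for the analogous Lemmas~\ref{sum-res} and~\ref{arif2}.
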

 \begin{proof}
Set $X_k=\sum_{i=0}^{k}x_i2^i$.
We determine the values of $x_k$ inductively so that the congruences
$X_k\bmod 2^{k+1}=Y\bmod 2^{k+1}$ hold, as required by \eqref{equat}.
The base case $k=0$ follows from the lemma's assumption that $Y\bmod 2=b_0$.

Assume that $x_k$ has been uniquely determined for every $k<n<r$, so that $X_k\bmod 2^{k+1}=Y\bmod 2^{k+1}$ for all $k<n$.
Consider the congruence $X_{n+1}=2^{n+1}x_{n+1}+2^nx_n+X_{n-1}=Y\bmod 2^{n+2}$, which must hold.
It follows that $[x_n/2]=b_{n+1}-(X_{n-1}-Y)/2^{n+1}\bmod 2$, so the integer part of $x_n/2$ is uniquely determined.
Consequently, $x_n\bmod 4=b_n+2[x_n/2]$ is uniquely determined, and hence so is $x_n$.

 Consider the last step $$X_{r-1}+2^r x_r=Y\bmod 2^{r+2}. $$
That is, $(X_{r-1}-Y)/2^r+x_r=0\bmod 4$, which uniquely determines $x_r$.

 \end{proof}

\begin{lem} Given positive-order columns $C_0,\dots,C_r$ of an $r$-Squiggle, a Boolean vector $\beta=(b_0,\dots,b_r)$, and an integer $Y$,
with parity $b_0+\odd C_0$, there are cells $F_i\in C_i$ with parity vector $(\odd F_0,\dots,\odd F_r)=\beta$ such that

\begin{equation}\label{transit-eq}
 2^{r+1}\row_2\pr F_0+\sum_{i=0}^{r} 2^i\row F_i=Y \bmod 2^{r+2}
\end{equation}
The values of $\row F_i$ are uniquely determined.
\label{transit}
\end{lem}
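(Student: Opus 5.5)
The plan is to reduce the statement to Lemma \ref{arif1}. The two difficulties are that the unknowns are cells rather than integers, and that there is the extra term $2^{r+1}\row_2\pr F_0$.

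First I fix one column $C_i$ of positive order. By Lemma \ref{zag-tip}, two cells $F,F'\in C_i$ with $\row_4F=\row_4F'$ have parents of the same parity, and $\Row F\equiv\Row F'\bmod 2^{r+2}$. Since $\row F$ is the position of $F$ in its parent, \eqref{recur} gives $\row F\equiv \Row F\bmod 2^{r+1}$. Hence $\row F$ depends only on the quaternary ordinate $\row_4 F$ within the column. By Lemma \ref{8-spectr}, every residue modulo four occurs in $C_i$. So the set $S_i=\{\row F\mid F\in C_i\}$ is a quartet, and each element of $S_i$ corresponds to a single quaternary class of cells.

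Next I translate the conditions into the language of Lemma \ref{arif1}.
\begin{itemize}
\item \emph{Parity.} Since $\odd F=\col_2F+\row_2F$ and $\col_2 F=\odd C_i$ is fixed on the column, the requirement $\odd F_i=b_i$ becomes $x_i\bmod 2=b_i+\odd C_i$, where $x_i=\row F_i$. This defines the Boolean vector $b_i'=b_i+\odd C_i$. The hypothesis on $Y$ says exactly that $Y\bmod 2=b_0'$.
\item \emph{Extra term.} By the same Lemma \ref{zag-tip}, $\odd\pr F_0$ is a function of $\row_4F_0$, that is, of $x_0\bmod 4$. Moreover $\col_2\pr F_0$ is fixed by the column $\pr C_0$. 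Therefore $\row_2\pr F_0=g(x_0\bmod 4)$ for some explicit function $g$.
\end{itemize}

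The key observation concerns the proof of Lemma \ref{arif1}. There, $x_0\bmod 4$ is determined by $Y\bmod 4$ together with $b_0',b_1'$ alone: it is fixed at the first inductive step, using only the lowest two bits. For $r\ge1$ the extra term $2^{r+1}g(x_0)$ does not change these two bits.

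So I proceed in order.
\begin{enumerate}
\item Compute the forced residue $x_0\bmod 4$ from $Y\bmod 4$ and $b_0',b_1'$.
\item Set $Y'=Y-2^{r+1}g(x_0\bmod 4)$. Then $Y'\equiv Y\bmod 4$, so $Y'$ still has parity $b_0'$.
\item Apply Lemma \ref{arif1} to the quartets $S_i$, the vector $(b_0',\dots,b_r')$ and $Y'$. This gives a unique solution $(x_0,\dots,x_r)$, whose $x_0$ has the precomputed residue because $Y'\equiv Y\bmod 4$.
\item Choose cells $F_i\in C_i$ with $\row F_i=x_i$; they exist since $x_i\in S_i$. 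They have the required parities, and they satisfy \eqref{transit-eq}.
\end{enumerate}

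For uniqueness, take any solution $F_i$. Its $x_0\bmod 4$ is forced by the low bits, so the correction to $Y'$ is forced. The uniqueness part of Lemma \ref{arif1} then pins down every $\row F_i$.

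The main obstacle is the first step: checking carefully that $\row F$, and not merely $\Row F$ modulo $2^{r+2}$, is determined by $\row_4 F$ inside a column, and that the parent parity is likewise determined. This needs the positive order of the columns, so that $\pr F$ exists and \eqref{recur} applies, and it needs Lemma \ref{zag-tip} to be applied to cells in the same column. The verification that $x_0\bmod 4$ is insensitive to the top bit is routine, but it depends on $r\ge1$.
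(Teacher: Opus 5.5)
Your proof is correct. Like the paper, you reduce to Lemma~\ref{arif1} with the quartets $S_i=\{\row F\mid F\in C_i\}$ for $i>0$ and the shifted parities $b_i'=b_i+\odd C_i$. The difference is in how you handle the extra term $2^{r+1}\row_2\pr F_0$.

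The paper absorbs that term into the zeroth unknown. It takes $S_0=\{\row F+2^{r+1}\row_2\pr F\mid F\in C_0\}$, which is just $\{\Row F\bmod 2^{r+2}\mid F\in C_0\}$; its text writes $\row_4F$ there, but $\row F$ is what matches \eqref{transit-eq}. This $S_0$ is still a quartet, because the added term is a multiple of $4$ and, by Lemma~\ref{zag-tip}, depends only on $\row_4F$. A single black-box application of Lemma~\ref{arif1} then gives existence and uniqueness together.

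You keep $S_0=\{\row F\}$ instead. You note that every solution has $x_0\equiv Y-2b_1'\pmod 4$, precompute the correction, apply Lemma~\ref{arif1} to the shifted target $Y'$, and argue uniqueness separately.

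Both routes rest on the fact you isolate: within a column, $\row_2\pr F_0$ is a function of $\row_4F_0$. Once you have that function $g$, you could replace $x_0$ by $x_0+2^{r+1}g(x_0\bmod 4)$ and skip the two-step bookkeeping. So the paper's route is shorter. Yours, in exchange, spells out what the paper leaves implicit: why each $S_i$ has exactly four elements (this needs Lemma~\ref{zag-tip}, not only Lemma~\ref{8-spectr}), and where $r\ge 1$ is used.
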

\begin{proof}
Set $S_0=\{\row_4 F+2^{r+1}\row_2\pr F\mid F\in C_0\}$ and $S_i=\{\row F\mid F\in C_i\}$ for $i>0$.
Let $b\prime_i=b_i+\odd C_i$.
By Lemma \ref{8-spectr}, each $S_i$ is a quartet. Applying Lemma \ref{arif1} with parity vector $(b\prime_0,\dots,b\prime_r)$ gives the required result.
\end{proof}

\begin{lem}\label{frac-seq}
Let $C_0,\dots,C_r$ be columns of $r$-Squiggles of order $n\ge0$, let $(b_0,\dots,b_r)$ be a Boolean vector, and let $Y$ be an integer with parity $b_0+\odd C_0$.
Then there is a unique choice of cells $F_i\in C_i$ with $\odd F_i=b_i$ for which the following equality holds:

 \begin{equation*}
  \sum_{j=0}^{r} 2^j \Row F_j =Y \bmod 2^{n(r+1)+1}
 \end{equation*}
\end{lem}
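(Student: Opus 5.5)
Induction on the order $n$, with Lemma \ref{transit} driving the inductive step. The recursion \eqref{recur}, $\Row F=\row F+2^{r+1}\Row\pr F$, splits the target sum into a local part and a parental part:
$$\sum_{j=0}^r 2^j\Row F_j=\sum_{j=0}^r 2^j\row F_j+2^{r+1}\sum_{j=0}^r 2^j\Row\pr F_j .$$
Modulo $2^{n(r+1)+1}$, the second term only needs the parental sum modulo $2^{(n-1)(r+1)+1}-2^{r+1}\cdot$ (scale) , that is, modulo $2^{(n-1)(r+1)+1}$ after dividing by $2^{r+1}$. So the plan is as follows.

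\textbf{Base case $n=0$.} Each column $C_j$ of order zero consists of the two floors. These have opposite parity, and their rows are $0$ and $1$. Fixing $\odd F_j=b_j$ therefore fixes $F_j$ uniquely. Since $\Row F_j=\col_2$-dependent parity data, we have $\Row F_0\equiv b_0+\odd C_0\pmod 2$, which is exactly the hypothesis $Y\equiv b_0+\odd C_0$. So the congruence mod $2$ holds, and uniqueness is automatic.

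\textbf{Inductive step.} Let the columns $C_j$ have order $n\ge1$. First apply Lemma \ref{transit} to $C_0,\dots,C_r$, $\beta$ and $Y$. It determines the local rows $\row F_j$ uniquely, together with the binary digit $\row_2\pr F_0$, so that \eqref{transit-eq} holds modulo $2^{r+2}$. Put
$$Y'=\Bigl(Y-\sum_j 2^j\row F_j\Bigr)/2^{r+1}.$$
This is an integer by \eqref{transit-eq}, and its parity is $\row_2\pr F_0$. Now choose the parent parities $b'_j=\odd\pr F_j$. This is the key point: Lemma \ref{zag-tip} together with Lemma \ref{bloc-odd} (and formula \eqref{block-pos}) show that the quaternary row of $F_j$ in its column determines the diagonal type, hence the parity, of its parent block and parent cell. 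So the $b'_j$ are forced by the rows already fixed. We must also check $\Row\pr F_0\equiv b'_0+\odd\pr C_0 \pmod 2$, which is just the definition of parity, consistent with $\row_2\pr F_0$. The induction hypothesis on the parent columns $\pr C_j$ then gives unique parent cells $\pr F_j$ with
$$\sum_j 2^j\Row\pr F_j\equiv Y'\pmod{2^{(n-1)(r+1)+1}} .$$
Finally, inside each chosen parent $\pr F_j$ we take the cell of $C_j$ with the prescribed local row $\row F_j$. It exists because that row arose from a cell of $C_j$ of the right parent parity, and Lemma \ref{zag-tip} says cells with equal quaternary rows and equal parent parity have the same row mod $2^{r+2}$, so the same local row. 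It is unique because a parent cell meets the column in a single block column at a given local row. Multiplying back by $2^{r+1}$ yields the required congruence modulo $2^{n(r+1)+1}$.

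\textbf{Uniqueness.} Any solution reduces modulo $2^{r+2}$ to a solution of \eqref{transit-eq}, using $\Row\pr F_0\equiv\row_2\pr F_0\pmod 2$. So its local rows are the ones forced by Lemma \ref{transit}, its parent parities are then forced, its parents are unique by induction, and finally the cells themselves are unique.

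\textbf{Main obstacle.} The delicate step is the bookkeeping that the parent parities $b'_j$ are determined by the local data. I would try to derive this from Lemma \ref{zag-tip}: the quaternary ordinate fixes the block diagonal type, hence $\odd\pr F$. A second point to check is that a cell with the prescribed local row really exists inside \emph{every} parent cell of the matching parity, not only in one. This should follow because every parent cell of a given parity has a block of the same type meeting $C_j$ in the same relative position, by \eqref{block-pos} and Lemma \ref{blok-stolb}.
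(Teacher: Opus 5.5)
Your proposal is correct and follows essentially the same route as the paper. Both arguments induct on the order, use Lemma \ref{transit} to fix the local rows $\row F_j$ and $\row_2\pr F_0$, pass to the rescaled target $Y'$ of parity $\row_2\pr F_0$, apply the induction hypothesis to the parent columns with parities $\odd\pr F_j$, and then use Lemma \ref{zag-tip} to re-select subcells with the same local row inside the new parents. The differences are minor: you take $n=0$ as the base and let the general step cover $n=1$ (the paper treats $n=1$ directly via Lemma \ref{arif1}), and you spell out the uniqueness argument, which the paper only asserts.
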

 \begin{proof}
We prove the statement by induction on the order of the columns.
The case $n=0$ is left to the reader.
 Let $n=1$. Since $\{\Row F\mid F\in C\}$
For every column $C$, the set $\{\Row F\mid F\in C\}$ is a quartet by Lemma \ref{8-spectr};
thus, Lemma \ref{arif1} gives cells $F_i\in C_i$ of the prescribed parities for which
 \begin{equation}\label{seq1}
 Y= \sum_{i=0}^{r} 2^i \Row F_i \bmod 2^{r+2}
 \end{equation}
The values of $\Row F_i$ are therefore uniquely determined, and hence the cells $F_i$ are unique.
 Thus, the base of induction is established.

Assume that the claim holds for columns of order $n\ge1$, and consider columns $C_i$ of order $n+1$.

By Lemma \ref{transit}, there are cells $F_i\in C_i$ of the prescribed parities such that
\begin{equation}\label{step-n}
2^{r+1}\row_2\pr F_0+\sum_{i=0}^{r} 2^i\row F_i=Y \bmod 2^{r+2}
 \end{equation}
The values of $\row F_i$ are uniquely determined.

Since $Y_1=2^{-r-1}(Y-\sum_{i=0}^{r}2^i\row F_i)$ has the same parity as $\row_2\pr F_0$,
the induction hypothesis applied to the parent columns $\pr C_i$ uniquely determines cells $G_i\in\pr C_i$ with $\odd G_i=\odd\pr F_i$ such that
 \begin{equation}\label{step=n1}
 \sum_{i=0}^{r} 2^i \Row G_i =Y_1 \bmod 2^{n(r+1)+1}
 \end{equation}
The cells $G_i$ have the same types as $\pr F_i$, so they contain subcells $F_i\prime\subset G_i$ of order $n+1$ for which
$C(F_i\prime)=C(F_i)$ and $\row F_i=\row F_i\prime$; the subcells are uniquely determined by their positions within their parent cells.
By Lemma \ref{zag-tip},
$\Row F_i\prime=\Row F_i\bmod 2^{r+2}$, so the required congruence holds for the cells $F_i\prime$.

Replacing $G_i$ by $\pr F_i$ in \eqref{step=n1} and multiplying by $2^{r+1}$ gives
$$
\sum_{i=0}^{r}2^i ( 2^{r+1}\Row \pr F_i)
=
 Y-\sum_{i=0}^{r} 2^i\row F_i \bmod 2^{(n+1)(r+1)+1}
$$
Moving the sum on the right-hand side to the left yields the required equality for columns of order $n+1$:
$$
\sum_{i=0}^{r} 2^i ( 2^{r+1}\Row \pr F_i+\row F_i)= \sum_{i=0}^{r} 2^i \Row F_i = Y \bmod 2^{(n+1)(r+1)+1}
$$
\end{proof}

\begin{theorem}\label{th-razvert}
For any $x_0,\dots,x_r$ in the unit interval and any $Y\in[0,2]$,
there exist $y_0,\dots,y_r$ such that the following conditions hold:
\begin{enumerate}
 \item for any integer $i\in [0,r]$ point $(x_i,y_i)$ belongs to the Squiggle $\w_r$, inscribed in the rectangle $[0,1]\times[0,2]$;
 \item $\sum_{j=0}^{r} 2^j y_j =Y \bmod 2$
 \end{enumerate}

\end{theorem}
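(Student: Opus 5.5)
We will derive the theorem from its discrete version, Lemma \ref{frac-seq}, by a compactness (nested-cells) argument. Place the $r$-Squiggle $\w_r$ in $[0,1]\times[0,2]$. Then each order-$n$ cell has circumscribed square of side $2^{-n(r+1)}$. The cell's lower-left corner has ordinate $2^{-n(r+1)}\Row F$ and abscissa $2^{-n(r+1)}\Col F$. For each $n$ and each $i$ we choose an order-$n$ column $C_i^{(n)}$ whose horizontal projection contains $x_i$; if $x_i$ is a dyadic boundary point we simply fix one of the two adjacent columns. The choice is made consistently, so that $\pr C_i^{(n+1)}=C_i^{(n)}$. Since the Squiggle's cells in a column project onto the whole base segment, every cell of $C_i^{(n)}$ contains a point with abscissa $x_i$. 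Indeed, each Zigzag cell is a connected set whose circumscribed square is $[F]$, so its horizontal projection is the full base.

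Next we discretize $Y$. Let $Y_n=\lfloor 2^{n(r+1)}Y\rfloor$, adjusted by at most $1$ so that its parity is $b_0+\odd C_0^{(n)}$. We may fix the Boolean vector $\beta$ arbitrarily, say $\beta=0$. Lemma \ref{frac-seq} then yields cells $F_i^{(n)}\in C_i^{(n)}$ with
$$\sum_{j=0}^r 2^j\Row F^{(n)}_j\equiv Y_n \pmod{2^{n(r+1)+1}}.$$
Multiplying by $2^{-n(r+1)}$, the lower-left ordinates $\eta_j^{(n)}$ of these cells satisfy $\sum_j 2^j\eta_j^{(n)}\equiv Y+O(2^{-n(r+1)}) \pmod 2$. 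Any point $y_j$ with $(x_j,y_j)\in F_j^{(n)}$ differs from $\eta_j^{(n)}$ by at most $2^{-n(r+1)}$. Hence there are points $(x_j,y_j^{(n)})\in\w_r\cap F_j^{(n)}$ with
$$\Bigl|\sum_j 2^j y_j^{(n)}-Y\Bigr|_{\bmod 2}\le c\,2^{r}2^{-n(r+1)}$$
for a constant $c$.

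Finally we pass to the limit. The vectors $(y_0^{(n)},\dots,y_r^{(n)})$ lie in the compact set $[0,2]^{r+1}$, so we extract a convergent subsequence with limit $(y_0,\dots,y_r)$. Each point $(x_i,y_i)$ lies in $\w_r$ because $\w_r$ is closed, being a union of two intersections of compact prototypes. The map $y\mapsto\sum 2^jy_j \bmod 2$ into $\R/2\Z$ is continuous, so condition 2 holds in the limit. This avoids any need for the cells $F_i^{(n)}$ to be nested in $n$. The uniqueness part of Lemma \ref{frac-seq} would give nesting, but we do not need it.

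The step most likely to be the main obstacle is bookkeeping of the parity constraint and of boundary effects. Lemma \ref{frac-seq} requires $Y_n$ to have the prescribed parity $b_0+\odd C_0^{(n)}$, and rounding to fix parity costs an error of one unit $2^{-n(r+1)}$; this is harmless since it vanishes as $n\to\infty$. We must also check that the case $Y=2$, or $x_i$ on the edge of a column, causes no trouble; the modulo-$2$ reduction and the closedness of $\w_r$ handle both. If Lemma \ref{frac-seq} is needed only for $n\ge1$, we start the sequence at $n=1$.
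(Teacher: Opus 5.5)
Your argument is correct and essentially the paper's own: both apply Lemma \ref{frac-seq} to order-$n$ columns over the $x_i$ and then pass to a limit along a subsequence, using that $\w_r$ is closed. The differences are only bookkeeping. The paper first reduces to dyadic $Y$, since the set of attainable $Y$ is closed, so the congruence holds exactly at lower-left corners and it then lets those corners converge. You instead treat arbitrary $Y$ directly with an $O(2^{-n(r+1)})$ rounding error, and you use connectedness of the cells to pick points on $\w_r$ with abscissa exactly $x_i$.
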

\begin{proof}
The set of values of $Y$ for which the theorem's conditions hold is compact, so it suffices to prove that it is dense.
The set of real numbers that become integers after multiplication by some power of two is dense in $\R$;
therefore, it suffices to prove the theorem for such values of $Y$.

Let $2^nY$ be an integer for some natural number $n$.
For each $x_j$, choose an order-$n$ column $C_n^j$ of the Squiggle $\w_r$ whose base contains $x_j$.
By Lemma \ref{frac-seq}, there are cells $F_j\in C_n^j$ for which
 \begin{equation}\label{sum-eq1}
  \sum_{j=0}^{r} 2^j \Row F_j =2^{n(r+1)}Y \bmod 2^{n(r+1)+1}
 \end{equation}
Let $y_j(n)$ be the ordinate of the lower-left corner of $F_j$, and let $x_j(n)$ be its abscissa.
Dividing both sides of \eqref{sum-eq1} by $2^{n(r+1)}$ and using $y_j(n)=2^{-n(r+1)}\Row F_j$,
we obtain
 \begin{equation*}
  Y= \sum_{j=0}^{r} 2^j y_j(n) \bmod 2
 \end{equation*}

The second condition is therefore satisfied. The resulting points $(x_j(n),y_j(n))$ may not lie on the Squiggle, and their abscissas
may not equal the prescribed values. However, compactness of the Squiggle allows us to choose a sequence of natural numbers
$n_k$ such that, for every $j$, the limits $\lim_{k\to\infty}x_j(n_k)$ and $\lim_{k\to\infty}y_j(n_k)$ exist;
these limits satisfy all the required conditions.

 \end{proof}

Define the parametrization of the Squiggle, $\zeta_r\colon[0,1]\to\w_r\subset\C$, by
\begin{equation}\label{zag-par}
 \zeta_r(t)=\begin{cases}
    \zeta_r^+(2t), & \mbox{if } t\le\frac12 \\
    (1+\i)+\i\zeta_r^+(2t-1), & \mbox{if } t\ge\frac12.
    \end{cases}
\end{equation}

\begin{lem}\label{zag-coef} For all $t,t\prime\in[0,1]$, the following inequality holds in the $l_\infty$ metric:
\begin{equation}\label{zag-dil}
 |\zeta_r(t)-\zeta_r(t')|\le 2^{(r+3)-1/(r+2)} |t-t'|^{(r+1)/(r+2)}
\end{equation}
\end{lem}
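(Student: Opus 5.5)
The plan is to reduce the inequality to \eqref{zig-param-ineq} for the unit ascending Zigzag, separating the case where $t,t'$ lie in the same half of $[0,1]$ from the case where they lie on opposite sides of $\frac12$. Throughout I write $h=(r+1)/(r+2)$ and $K=2^{r+2}$, and I use the $l_\infty$ metric, in which the map $z\mapsto(1+\i)+\i z$ is an isometry.

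\textbf{Case 1: $t,t'\le\frac12$.} Here $\zeta_r(t)=\zeta_r^+(2t)$, so \eqref{zig-param-ineq} gives
$$|\zeta_r(t)-\zeta_r(t')|\le K|2t-2t'|^h=2^{h}K|t-t'|^h .$$
The constant in the statement is
$$2^{(r+3)-1/(r+2)}=2^{r+2}\cdot 2^{1-1/(r+2)}=K\,2^{h},$$
since $1-1/(r+2)=h$. So this case yields exactly the claimed bound.

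\textbf{Case 2: $t,t'\ge\frac12$.} The map $z\mapsto(1+\i)+\i z$ is a rotation followed by a translation, hence an $l_\infty$ isometry. The argument of Case 1 then applies verbatim. Before this, I will check that $\zeta_r$ is well defined at $t=\frac12$. The first branch gives $\zeta_r^+(1)=1+\i$. The second gives $(1+\i)+\i\zeta_r^+(0)=1+\i$. The two branches agree.

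\textbf{Case 3: $t<\frac12<t'$.} I would pass through the midpoint and use the triangle inequality:
$$|\zeta_r(t)-\zeta_r(t')|\le K2^h\bigl((\tfrac12-t)^h+(t'-\tfrac12)^h\bigr).$$
Concavity of $s\mapsto s^h$ gives $a^h+b^h\le 2^{1-h}(a+b)^h$, which would cost an extra factor $2^{1-h}$. To avoid this loss, I would exploit the geometry instead. The two halves of the Squiggle lie in the closed squares $[0,1]^2$ and $[0,1]\times[1,2]$, which meet along the segment $\im z=1$. The key step is then to bound
$$|\zeta_r(t)-\zeta_r(t')|\le\max\bigl(|\zeta_r(t)-(1+\i)|,\,|\zeta_r(t')-(1+\i)|\bigr).$$
For the vertical coordinate this holds because the two points lie on opposite sides of the line $\im z=1$, so their vertical gap is the sum of their gaps to that line, not the maximum. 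For the horizontal coordinate it holds because both real parts lie in $[0,1]$ and hence both are within $1-\re$ of $1$, so their difference is at most the larger of those two gaps. Each of these two distances is bounded by $K2^h|t-t'|^h$, because $|\frac12-t|$ and $|t'-\frac12|$ are both at most $|t-t'|$.

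\textbf{Expected obstacle.} The vertical coordinate in Case 3 is where the argument is weakest, because there the triangle inequality gives a sum of gaps, not a maximum. Suppose the geometric maximum estimate fails for that coordinate. Then I would fall back on the concavity bound and compare the resulting constant $K2^{h}2^{1-h}=2K$ with the stated one. That fallback is only consistent with the statement if $2K$ does not exceed $K2^h$, and that holds only if the paper's constant is larger than I computed. So the vertical part of the crossing case is the step to check carefully.
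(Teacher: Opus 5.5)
Your Cases 1 and 2 are correct and already give exactly the stated constant, since $2^{(r+3)-1/(r+2)}=2^{r+2}\cdot 2^{(r+1)/(r+2)}$. The proof breaks in Case 3. The key inequality $|\zeta_r(t)-\zeta_r(t')|\le\max(|\zeta_r(t)-(1+i)|,|\zeta_r(t')-(1+i)|)$ is false. Your own remark that the vertical gap is the \emph{sum} of the gaps to the line $\im z=1$ is exactly why.

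Here is a concrete failure. Let $s=2^{-(r+1)n}$. The point $p=(1-s)+(1-s)i$ is the initial vertex of the last order-$n$ cell of the lower floor. The point $q=(1-s)+(1+s)i$ is the terminal vertex of the first order-$n$ cell of the upper floor. Both lie on $\w_r$ and both are at $l_\infty$-distance $s$ from $1+i$, yet $|p-q|_\infty=2s$.

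Your fallback through the midpoint gives only the constant $2^{r+3}$. This exceeds $2^{(r+3)-1/(r+2)}$ by the factor $2^{1/(r+2)}$, because $a^h+b^h>(a+b)^h$. So, as written, the proposal proves the exponent but not the lemma with its stated constant.

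The missing idea is to avoid splitting at $t=\tfrac12$ and use self-similarity instead; this is how the paper argues. Take the third and fourth first-order cells of $\z_r^+$, which $\zeta_r^+$ traverses on $[2\cdot2^{-(r+2)},4\cdot2^{-(r+2)}]$. Their circumscribed squares are stacked one above the other. The lower cell is even: an ascending Zigzag entered at its lower-left corner. The upper cell is odd: a descending Zigzag entered at its lower-right corner.

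Hence a homothety $H$ of ratio $2^{r+1}$ carries their union onto the Squiggle. With $\tau(t)=2^{-(r+2)}(2t+2)$, one has $\zeta_r=H\circ\zeta_r^+\circ\tau$ on the whole of $[0,1]$. Now apply the Zigzag estimate $|\zeta_r^+(u)-\zeta_r^+(u')|\le 2^{r+2}|u-u'|^{(r+1)/(r+2)}$ once. For all $t,t'$, crossing $\tfrac12$ or not, this gives the factor $2^{r+1}\cdot2^{r+2}\cdot 2^{-(r+1)^2/(r+2)}=2^{(r+3)-1/(r+2)}$, with no loss from the triangle inequality.
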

\begin{proof}
Consider the third cell $F_3$ and fourth cell $F_4$ of the first-order ascending Zigzag $\z_r^+$. Their
circumscribed squares lie one above the other. The third cell is an ascending Zigzag and the fourth is a descending one.
Their union is therefore mapped onto a Squiggle by a homothety $H$ with scale factor $2^{r+1}$.
The parametrization $\zeta_r^+$ maps the interval $[2\cdot2^{-(r+2)},4\cdot2^{-(r+2)}]$ onto $F_3\cup F_4$.
Let $\tau(t)=2^{-(r+2)}(2t+2)$. Then $H\circ\zeta_r^+\circ\tau=\zeta_r$.
It follows that
 \begin{multline*}
  |\zeta_r(t)-\zeta_r(t')|\le 2^{r+1} |\zeta_r^+(\tau(t))-\zeta_r^+(\tau(t'))|
 \le 2^{r+1}(2^{(r+2)}|\tau(t)-\tau(t')|^{(r+1)/(r+2)})\le \\
 2^{(2r+3)}
 (2^{-(r+1)^2/(r+2)} |t-t'|^{(r+1)/(r+2)})=2^{(r+3)-1/(r+2)} |t-t'|^{(r+1)/(r+2)}
  \end{multline*}

\end{proof}

\begin{theorem}\label{th-zag} For every integer $n>1$, there exists a surjective map $\alpha_n\colon[0,1]^n\to[0,1]^{n+1}$
that satisfies the following H\"older inequality for all $x,y\in[0,1]^n$:
$$ |\alpha_n(x)-\alpha_n(y)|\le (2^n-1)2^{n+2-1/(n+1)} |x-y|^{n/(n+1)} $$
\end{theorem}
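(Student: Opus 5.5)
We take $r=n-1$ and define $\alpha_n$ by formula \eqref{main-form}, with $\w_{n-1}(t)$ replaced by the parametrization $\zeta_r$ of \eqref{zag-par}:
$$\alpha_n(t_1,\dots,t_n)=\Bigl(\re\zeta_r(t_1),\dots,\re\zeta_r(t_n),\ \phi\Bigl(\sum_{k=1}^{n}2^{k-1}\im\zeta_r(t_k)\Bigr)\Bigr),$$
where $\phi(s)=|(s\bmod 2)-1|$ is the tent map $\R\to[0,1]$. The map $\phi$ is $1$-Lipschitz, is $2$-periodic, and sends $[0,2)$ onto $[0,1]$. The proof then splits into surjectivity and the H\"older estimate.

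\emph{Surjectivity.} Take a target point $(x_1,\dots,x_n,z)\in[0,1]^{n+1}$ and put $Y=1+z\in[1,2]\subset[0,2]$, so that $\phi(Y)=z$. Apply Theorem~\ref{th-razvert} with $r=n-1$, the abscissas $x_0,\dots,x_r$ (reindexed as $x_1,\dots,x_n$) and this $Y$. It gives $y_j$ with $(x_j,y_j)\in\w_r$ and $\sum 2^{j}y_j\equiv Y \pmod 2$. Since $\zeta_r$ maps $[0,1]$ onto $\w_r$, we can choose $t_j$ with $\zeta_r(t_j)=x_j+\i y_j$. Then $\alpha_n(t)$ equals the target, because $\phi$ depends only on its argument mod $2$.

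One point needs checking here: $\zeta_r$ really is onto the Squiggle inscribed in $[0,1]\times[0,2]$ in the orientation used in Theorem~\ref{th-razvert}. The lower half is $\zeta_r^+([0,1])=\z_r^+$. The upper half is $(1+\i)+\i\z_r^+$, which is a $90^\circ$ rotation of the ascending Zigzag placed over $[0,1]\times[1,2]$, hence the descending Zigzag by Lemma~\ref{povorot}. If the normalization is off by a scale factor, I would rescale the Squiggle, since all statements are homothety invariant.

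\emph{H\"older estimate.} Let $L=2^{(r+3)-1/(r+2)}=2^{n+2-1/(n+1)}$ be the constant from Lemma~\ref{zag-coef}, and set $h=n/(n+1)$. For $t,t'\in[0,1]^n$ we have $|t_k-t'_k|\le|t-t'|$ in $l_\infty$ (or in Euclidean). Each of the first $n$ coordinates changes by at most $|\zeta_r(t_k)-\zeta_r(t'_k)|\le L|t-t'|^{h}$. Because $\phi$ is $1$-Lipschitz, the last coordinate changes by at most
$$\sum_{k=1}^n 2^{k-1}\,|\im\zeta_r(t_k)-\im\zeta_r(t'_k)|\le (2^n-1)L|t-t'|^h.$$
In the $l_\infty$ metric on $[0,1]^{n+1}$ this yields exactly the stated constant $(2^n-1)2^{n+2-1/(n+1)}$, since $2^n-1\ge1$ dominates the first $n$ coordinates. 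I would state explicitly that the $l_\infty$ metric is used, as in Lemma~\ref{zag-coef}.

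\emph{Main obstacle.} The estimate is routine; the substance is surjectivity, and there the real work is already done in Theorem~\ref{th-razvert}. The step to be careful about is matching that theorem to our setting: the indices $j=0,\dots,r$ must correspond to $k=1,\dots,n$ with weights $2^{k-1}$, and $\zeta_r$ must parametrize precisely the Squiggle $\w_r$ used there. The gluing at $t=\tfrac12$ is consistent, since the end $(1,1)$ of the lower Zigzag coincides with the start of the upper part, $(1+\i)+\i\cdot0$.
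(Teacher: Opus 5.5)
Your proposal is correct and follows essentially the same route as the paper: $\alpha_n$ is the $n$-fold Cartesian power of $\zeta_{n-1}$, whose H\"older bound comes from Lemma \ref{zag-coef}, composed with a map that is Lipschitz with constant $2^n-1$ in the $l_\infty$ metric, and surjectivity comes from Theorem \ref{th-razvert}. Your explicit choice $Y=1+z$ and your check that $\zeta_r$ traces exactly the Squiggle in $[0,1]\times[0,2]$ (descending floor on top, consistent gluing at $t=\tfrac12$) make explicit details that the paper leaves implicit.
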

\begin{proof}
The desired map is defined using the parametrization of an $(n-1)$-fold Squiggle inscribed in a rectangle
 $[0,1]\times[0,2]\subset \mathbb{C}$
 by the following formula:

 \begin{equation*}
  \alpha_n(t_1,\dots, t_n)=(\re \zeta_{n-1}(t_1),\dots,\re \zeta_{n-1}(t_n),\left | \left(\sum_{k=1}^n 2^{k-1} \im \zeta_{n-1}(t_k)\right) \bmod 2 -1\right |)
 \end{equation*}

Indeed, the $\alpha_n$ map is a superposition of the $n$-th Cartesian degree of the H\"older-continuous map $\zeta_r(t)$
and the Lipschitz map with the stretching coefficient $\sum_{k=1}^n2^{k-1}=(2^n-1)$.
 And since the stretching coefficient in the metric $l_\infty$ and the H\"older exponent of the Cartesian degree of mapping are the same as
 for the mapping itself, Lemma \ref{zag-coef} provides the required inequality.

 The surjectivity of the $\alpha_n$ mapping is provided by the theorem \ref{th-razvert}.
 \end{proof}

\section{Fractal Cross}

For a square Boolean matrix $M=\{M(i,j)\mid i,j\in [0,n-1]\}$ of size $n\times n$ and a Cartesian square $S$, we define
$$M(S)=\{F\in S/(n\times n)\mid M(\col F,\row F)=1\}.$$
We will call the elements of the set $M(S)$ $M$-\emph{selected} cells, and their union an $M$-selected subset of the square $S$.

The superposition of matrix selections of the Cartesian square $ M(N(S))=\{M(F)\mid F\in N(S)\}$ is a matrix selection with a matrix equal to the tensor product $M\otimes N$.

 For any natural number $r$, we define the Boolean matrix $\x_r$ of size $2^{r+1} \times 2^{r+1}$ such that
 $$ \x_r(i,j)=\begin{cases}
    1, & \mbox{if $i=j$ or $i+j=2^{r+1}-1$} \\
    0, & \mbox{in all other cases}.
    \end{cases}$$

A partition cell is called \emph{diagonal} if its intersection with any diagonal of the square $S$ is a diagonal of the cell.

For a given natural $r$ and a square $S$ (not necessarily Cartesian), is defined its
$2^{r+1}$-\emph{diagonalization} as the set of diagonal cells
of the regular $2^{r+1}\times 2^{r+1}$ partition of the square $S$.
 As it is easy to see, the $2^{r+1}$--diagonalization of the square $S$ coincides with its $\x_r$-partition.

 Thus, the $2^{r+1}$--diagonalization consists of $2^{r+2}$ square cells homothetic to the square $S$ with a similarity coefficient of $2^{-(r+1)}$.

 Note that the body $\x_0 (S)$ coincides with the original square, whereas the cells of the 2-diagonalization represent four quarters of the original square. $\x_0 (S)=S/(2\times 2)$.

 By induction, we define an $n$-fold $2^{r+1}$--diagonalization, the totality of which cells form a single
$2^{r+1}$--diagonalization of cells $(n-1)$-multiple $2^{r+1}$--diagonalization.
The body of the $n$-fold diagonalization of the square $S$ is denoted by $\cup\x_r^n S$.
Thus, the cells of the $n$-fold $2^{r+1}$--diagonalization of the unit square are squares with side length $2^{-(r+1)n}$.
The bodies of the $n$-fold $2^{r+1}$--diagonalizations of the square $S$ form a sequence that decreases as the multiplicity increases; the intersection of this sequence is denoted by $\x_r^\infty S$ and is called the \emph{fractal $2^{r+1}$--adic Cross} inscribed in the square $S$.

The matrix of the $n$-fold $2^{r+1}$--diagonalization of a Cartesian square is
 tensor product of $n$ matrices $\x_r$.

Note that $\x_0^n S=S/(2^n\times 2^n)$ and $\x_0^\infty S=S$.
\bigskip

\centerline{\includegraphics[scale=0.8]{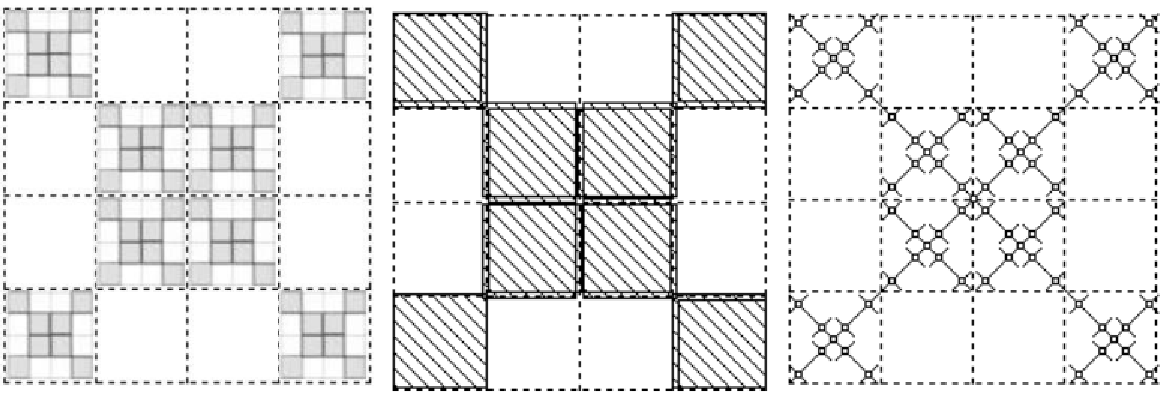}}
 \centerline{Figure 3}
\bigskip
Figure 1 shows the $2^{r+1}$--diagonalization for $r=1$ in the center. On the right is the fractal Cross $\x_1^\infty S$, and on the left is the double diagonalization $\x^2_1 S$.
\bigskip

\centerline{\includegraphics[scale=1]{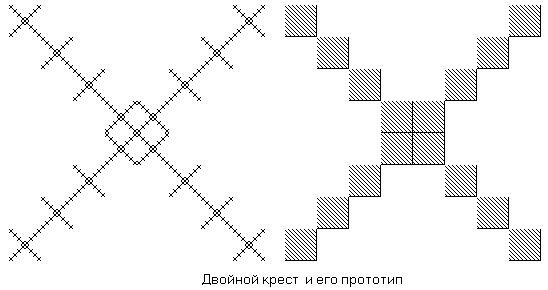}}
 \centerline{Figure 4}

\bigskip
 Figure 2 shows the fractal Cross $\x^\infty_3 S$ on the left, and $2^{r+1}$--diagonalization for $r=3$ on the right.

\begin{lem} For a square $S$ and any natural number $r$, the inscribed fractal Cross has the following properties:
 \label{prop-crest}
 \begin{enumerate}
      \item $\x_r^\infty S$ is entirely contained in the square $S$ and contains both diagonals of this square.
      \item Any isometry of the square $S$ maps $\x_r^\infty S$ onto itself.
      \item $\x_r^\infty S$ consists of $2^{r+2}$ similar cells with a coefficient of $2^{-(r+1)}$.
  \end{enumerate}
 \end{lem}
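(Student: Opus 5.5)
Each of the three properties will be checked at the level of the finite diagonalizations $\cup\x_r^n S$ and then passed to the intersection $\x_r^\infty S=\bigcap_n \cup\x_r^n S$. This works because all three properties are stable under decreasing intersections.

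\emph{Property 1.} Containment in $S$ is immediate, since $\cup\x_r^1 S\subset S$. For the diagonals, I will show by induction on $n$ that each diagonal of $S$ is covered by the diagonal cells of the $n$-fold diagonalization. The base step uses the definition. The partition $S/(2^{r+1}\times 2^{r+1})$ is regular, so the cells whose intersection with a diagonal of $S$ is a diagonal of the cell cover that diagonal. There are $2^{r+1}$ such cells on each diagonal. For the inductive step, fix a cell $F$ of the $(n-1)$-fold diagonalization lying on a diagonal $d$ of $S$; then $d\cap F$ is a diagonal of $F$. By the base step applied to $F$, this diagonal is covered by the diagonal cells of $F/(2^{r+1}\times 2^{r+1})$. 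These are exactly the cells kept in $\x_r(F)$. Hence $d\subset \cup\x_r^n S$ for all $n$, and therefore $d\subset\x_r^\infty S$.

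\emph{Property 2.} Let $g$ be an isometry of $S$, that is, an element of the dihedral group of the square. Then $g$ permutes the cells of the regular partition $S/(2^{r+1}\times2^{r+1})$ and maps the pair of diagonals of $S$ onto itself. So $g$ maps diagonal cells to diagonal cells, and $g(\cup\x_r S)=\cup\x_r S$. Concretely, the matrix $\x_r$ is invariant under $(i,j)\mapsto(j,i)$ and under $(i,j)\mapsto(2^{r+1}-1-i,j)$. For the induction, write $g$ restricted to a cell $F$ as $g|_F=h\circ\phi$. Here $\phi$ is an isometry of $F$, and $h$ is the translation-plus-homothety-free map (a pure translation) carrying $F$ onto $g(F)$. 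Diagonalization commutes with translations, and by the base case it is invariant under $\phi$. Therefore $g(\cup\x_r^{n}S)=\cup\x_r^n S$ for every $n$, and intersecting gives the claim. The point needing care is that diagonalization depends only on the square as a set and not on a chosen orientation. This holds because the definition refers only to the partition and the diagonals, and I regard this as the main (mild) obstacle to state cleanly.

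\emph{Property 3.} The $2^{r+2}$ cells of $\x_r(S)$ are the images of $S$ under similarities $s_k$ with ratio $2^{-(r+1)}$. Each $s_k$ can be taken to be a homothety composed with a translation, so it maps partitions to partitions and diagonals to diagonals. Then by construction
\[
\cup\x_r^{n}S=\bigcup_{k}\cup\x_r^{n-1}(s_kS)=\bigcup_k s_k\bigl(\cup\x_r^{n-1}S\bigr).
\]
The sets involved are nested decreasing compacta, and there are finitely many $k$. So intersection over $n$ commutes with the finite union: a point lying in the union for every $n$ lies in a fixed $s_k(\cdot)$ for infinitely many $n$, and hence for all $n$ by monotonicity. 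This yields $\x_r^\infty S=\bigcup_k s_k(\x_r^\infty S)$. That is a decomposition into $2^{r+2}$ pieces, each similar to the whole with coefficient $2^{-(r+1)}$, and each piece is the Cross inscribed in the corresponding cell.
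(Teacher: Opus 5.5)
Your proposal is correct and follows essentially the paper's route. The paper calls properties 1 and 2 obvious, and your inductions supply exactly the omitted details; the property 2 induction is cleanest if phrased as ``$g$ permutes the cells of $\x_r^{n-1}S$''. The paper proves property 3 by the same observation, that the similarities onto first-level cells carry $\x_r^{n-1}S$ onto the pieces of $\x_r^{n}S$, and your pigeonhole-and-monotonicity argument for swapping the intersection over $n$ with the finite union makes explicit the limiting step the paper leaves implicit in ``including infinite ones''.
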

 \begin{proof}
 The first and second properties are obvious.
 Let us consider the third property.
 Let $F$ be a cell of the $2^{r+1}$-diagonalization of the square $S$. Then, a similarity transformation with a scaling factor of $2^{r+1}$,
 which maps $F$ to $S$, maps $\x_r^n F$ to $\x_r^n S$ for any $n$, including infinite ones.
  On the other hand, the union of $\x_r^n F$ over all cells of a single diagonalization yields an $(n+1)$-fold diagonalization of the square $S$. This proves the self-similarity of $\x_r^\infty S$.
 \end{proof}

\section{Parametrization of the fractal Cross}
In this section, we will construct a mapping of a segment onto the Cross $\x_r^\infty S$ with a H\"older exponent of $\frac{1+r}{2+r}$.
Moreover, for $r=0$, the resulting curve will coincide with the well-known Sierpinski curve.

We will assume that the plane in which we carry out the constructions is complex, so that a point with coordinates $(x,y)$ corresponds to the complex number $x+\i y$. We will denote by $S$ the square with diagonal $[-1,1]$.
 The diagonals of this square induce a partition of all the diagonalization cells of $\x_r S$ into pairs of isosceles right triangles, which we will call the \emph{triangular cells} of the diagonalization of $\x_r S$.
 The set of triangular diagonalization cells $\x_r S$ will be called the $2^{r+1}$-\emph{triangulation} of the square $S$.

 All triangulation cells have hypotenuses parallel to one of the coordinate axes.
 The triangular cells are divided into four types depending on the relative position of their right-angled vertex and hypotenuse:
 \begin{itemize}
 \item \emph{upper} A triangle with a horizontal hypotenuse located above its hypotenuse is called:
 \item \emph{lower} — a triangle with a horizontal hypotenuse located below its hypotenuse;
 \item \emph{right} — a triangle with a vertical hypotenuse located to the right of its hypotenuse;
 \item \emph{left} — a triangle with a vertical hypotenuse located to the left of its hypotenuse.
 \end{itemize}

We consider all triangles to be positively oriented, with an orientation induced by the complex plane, that is, the sides of the triangle are oriented in such a way that when moving along a side from the beginning to the end, the triangle remains on the left.
\emph{ The beginning} of an oriented right triangle will be called the initial vertex of its hypotenuse, that is, such an acute vertex of the triangle that when moving from this vertex along the hypotenuse, the triangle remains on the left.
The end of the right triangle is called the other acute vertex.

The highlighted vertices are the left vertex in the upper triangle, the right vertex in the lower triangle, the upper vertex in the right triangle, and the lower vertex in the left triangle.
Multiplication by $\i$ maps the upper triangle to the left-hand position, the left triangle to the bottom, the lower triangle to the right, and the right triangle to the top, carrying the distinguished vertex to the distinguished vertex.

\begin{lem}\label{podobie}
For any isosceles right triangle $T$, there is an orientation-preserving similarity
mapping $T$ to the upper half of $S$ and taking its initial vertex to $-1$.
 \end{lem}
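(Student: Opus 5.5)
Any isosceles right triangle $T$ is determined up to orientation-preserving similarity by its oriented hypotenuse. The plan is therefore to show that such a similarity exists between $T$ and the upper half of $S$, and that it is pinned down by where the hypotenuse goes.

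First identify the target. The square $S$ has diagonal $[-1,1]$, so its vertices are $\pm1$ and $\pm\i$. Its upper half is the triangle $T_0$ with vertices $-1,1,\i$; it is an isosceles right triangle with hypotenuse $[-1,1]$ and right angle at $\i$. Under the positive orientation convention, moving from $-1$ to $1$ along the hypotenuse keeps $T_0$ on the left, since $\i$ lies above the real axis. Hence the initial vertex of $T_0$ is $-1$ and its end is $1$.

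Now let $T$ have initial vertex $a$, end vertex $b$, and right-angle vertex $c$. By the definition of the initial vertex, $T$ lies to the left of the directed segment from $a$ to $b$. Since $T$ is isosceles right with hypotenuse $[a,b]$, the apex is $c=\frac{a+b}{2}+\i\,\frac{b-a}{2}$: it is the midpoint plus a rotation by $+90^\circ$ of half the hypotenuse vector. This rotation is the leftward one, which matches the side on which $T$ lies.

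Define the complex affine map
\[
\phi(z)=-1+\frac{2(z-a)}{b-a}.
\]
It is an orientation-preserving similarity, being of the form $z\mapsto\lambda z+\mu$ with $\lambda\neq0$. It sends $a\mapsto-1$ and $b\mapsto1$, and since it is complex-linear it commutes with the formula for the apex:
\[
\phi(c)=\frac{-1+1}{2}+\i\,\frac{1-(-1)}{2}=\i .
\]
So $\phi$ maps the three vertices of $T$ onto those of $T_0$. An affine map carries a triangle onto the triangle spanned by the image vertices, so $\phi(T)=T_0$ with the initial vertex going to $-1$.

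The only step needing care is the sign check: the apex of $T$ must be $+\i$ times the half-hypotenuse vector, not $-\i$. This is exactly where the convention "the triangle remains on the left" enters. I would verify it against the paper's list of cases; for an upper triangle, for instance, the initial vertex is the left one and the apex lies above, consistent with the formula. Uniqueness, if it is wanted, follows because an orientation-preserving similarity is determined by the images of two points.
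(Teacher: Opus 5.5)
Your proof is correct and complete. The paper leaves this lemma to the reader, and your explicit map $\phi(z)=-1+\frac{2(z-a)}{b-a}$ supplies exactly the needed verification: it sends $a\mapsto-1$, $b\mapsto 1$, and the apex $c=\frac{1-i}{2}a+\frac{1+i}{2}b$ to $i$, because that expression is an affine combination and is therefore preserved by the complex-affine map $\phi$. Keep your closing uniqueness remark: the paper goes on to speak of \emph{the} triangulation of $T$ induced by this similarity, which is well defined because an orientation-preserving similarity is fixed by the images of $a$ and $b$.
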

 \begin{proof}
 The proof is left to the reader.
 \end{proof}
The triangulation of the triangle $T$ induced by this similarity is called its $2^{r+1}$--triangulation and is denoted by $\x_r T$. We denote the upper half of the square $S$ by $S^+$.
Thus, $\x_r S^+$ is a triangle with vertices $-1,\i,1$, divided into $2^{r+2}$ triangular cells.
The triangular cells of $\x_r S^+$ have the following properties:
 \begin{itemize}
 \item the terminal vertex of any cell is the initial vertex for some cell;
 \item the terminal vertex of any cell is the initial vertex for no more than two cells;
 \item if the terminal vertex of some cell is the initial vertex for two cells, then the hypotenuses of these cells are mutually orthogonal.;
 \end{itemize}
The verification of these properties is left to the reader (see Figure 5).

\centerline{\includegraphics[scale=0.8] {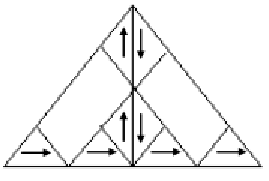}}
 \centerline{Figure 5}

For each triangular cell in $\x_r S^+$, define its successor according to the following rules:
\begin{enumerate}
\item If the terminal vertex of a triangle is the initial vertex of only one triangle, that triangle is the next one.
\item If the terminal vertex of a triangle $T$ is the initial vertex of two triangles, the next triangle is the one whose hypotenuse is orthogonal to that of $T$.
\end{enumerate}

A sequence of right triangles $T_1,T_2,\dots,T_n$ is called an \emph{admissible chain} if each $T_{k+1}$ follows $T_k$ according to these rules for $k<n$.
$T_1$ is the beginning of the chain, and $T_n$ is its end.

\begin{lem} The triangular cells of $\x_r S$ can be numbered uniquely to form an admissible chain of length $2^{r+3}$ that begins and ends at $-1$; the first half of the chain lies in $S^+$ and ends at $1$.
\label{punktir}
\end{lem}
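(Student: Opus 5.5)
Since each triangular cell has exactly one successor, an admissible chain starting at a given triangle is forced. So I would construct the chain explicitly and then check that it runs through all $2^{r+3}$ cells before closing up.

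\emph{Setup.} Split $\x_r S$ into $S^+$ and its image $-S^+$ under multiplication by $-1$. This map is an isometry of $S$ and preserves $\x_r S$ by Lemma \ref{prop-crest}. It also preserves orientation, so it carries initial vertices to initial vertices and preserves the successor rule. It therefore suffices to show three things:
\begin{enumerate}
\item inside $S^+$ the successor rule, started from the unique cell whose initial vertex is $-1$, produces an admissible chain through all $2^{r+2}$ cells of $\x_r S^+$;
\item this chain ends at $1$;
\item at the vertex $1$ the successor of the last cell is the first cell of the rotated chain in $-S^+$, and symmetrically at $-1$.
\end{enumerate}
The chain in $-S^+$ then begins at $1$ and ends at $-1$. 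Uniqueness follows because the first cell is determined (only one cell has initial vertex $-1$ inside $S^+$) and each successor is unique.

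\emph{The chain inside $S^+$.} The cells of $\x_r S^+$ are the halves of the diagonal cells of $S/(2^{r+1}\times 2^{r+1})$ lying above the real axis. They sit along the two segments $[-1,\i]$ and $[\i,1]$, with $2^{r}$ square cells (hence $2^{r+1}$ triangles) along each arm.

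Along the arm $[-1,\i]$, each square cell on that diagonal is cut by its own diagonal into a left triangle and an upper triangle. I would check that following the rule alternates between these two types. Going from a left triangle to an upper triangle is forced, since only one cell starts at that vertex. Going from an upper triangle to the next left triangle is the branching case, where the orthogonality rule selects the next cell. Each step advances half a cell along the arm, so the chain visits the $2^{r+1}$ triangles of the arm in order and reaches $\i$.

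At $\i$ the arm turns onto $[\i,1]$. The same alternation, now between upper and right triangles (the upper and left ones multiplied by $-\i$ and reflected), carries the chain down to $1$. This uses $2^{r+1}+2^{r+1}=2^{r+2}$ triangles, exactly the number of cells, so every cell is visited exactly once. The three listed properties of $\x_r S^+$ ensure that the successor is always well defined, so the chain never stalls.

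\emph{Closing up, and the main obstacle.} At $1$ the last cell of the $S^+$ chain is a right triangle. Among the cells of $-S^+$, the candidates starting at $1$ must be examined, and the orthogonality rule has to select the image under $z\mapsto -z$ of the first cell of the $S^+$ chain. By symmetry the same then holds at $-1$, which closes the chain.

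The main obstacle is the local case analysis at the branch points: the centre $\i$, the vertex $1$ where $S^+$ meets $-S^+$, and the centre $0$ of $S$, where all four arms meet. At each of these, vertices are shared by triangles from different arms, and one must verify that the orthogonality rule picks the intended continuation rather than jumping to another arm. I would settle this by writing down the at most two triangles with a given initial vertex in each of these local configurations, using the rotation by $\i$ described before Lemma \ref{podobie} to cut the number of cases down to one configuration up to symmetry.
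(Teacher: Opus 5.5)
The overall strategy is the natural one: build the chain in $S^+$ by the successor rule, transport it to $-S^+$ by $z\mapsto -z$, and glue at $\pm1$. The paper leaves the proof to the reader, but its hypotenuse code $C_r$ in \eqref{r-code} records exactly such a chain. However, your central step rests on a wrong picture of $\x_r S^+$.

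The diagonal cells of $S$ lie along the diagonals $[-1,1]$ and $[-i,i]$ of $S$. The segments $[-1,i]$ and $[i,1]$ are sides of $S$, and no diagonal cells run along them. So $\x_r S^+$ consists of:
\begin{itemize}
\item the $2^{r+1}$ upper triangles whose hypotenuses tile $[-1,1]$;
\item the $2^r$ left and $2^r$ right triangles whose hypotenuses tile $[0,i]$.
\end{itemize}
One diagonal cut of a square gives an upper/lower or a left/right pair, never ``a left and an upper triangle''. Moreover, the two halves of a square carry opposite orientations of their common hypotenuse. Hence the alternation you describe, the ``half a cell per step'', the turn at $i$ onto $[i,1]$, and the arrival at $1$ on a right triangle are all false. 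Your count $2^{r+1}+2^{r+1}$ agrees only numerically.

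The correct verification runs as follows.
\begin{itemize}
\item Go through $2^r$ upper triangles from $-1$ to $0$; each intermediate vertex starts exactly one cell of $S^+$.
\item At $0$ exactly two cells of $S^+$ start: the upper triangle on $[0,2^{-r}]$ and the left triangle on $[0,2^{-r}i]$. Orthogonality sends you up.
\item Go through $2^r$ left triangles to $i$. Only the right triangle starts at $i$, so the U-turn is forced.
\item Go through $2^r$ right triangles back to $0$, where orthogonality now selects the upper triangle.
\item Go through $2^r$ upper triangles to $1$.
\end{itemize}
This is the code $2^r\times(1)+2^r\times(i)+2^r\times(-i)+2^r\times(1)$, and it exhausts the $2^{r+2}$ cells of $S^+$. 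At $1$ the last cell is an upper triangle. The only cell of $\x_r S$ starting at $1$ is the lower half of the same corner square, so the gluing is forced by rule 1, not by orthogonality; the same holds at $-1$.

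A second gap: you cannot invoke the paper's three listed properties to conclude that the successor is always well defined and the chain ``never stalls''.
\begin{itemize}
\item For $r\ge1$, at each vertex $yi$ with $0<y<1$ two cells of $S^+$ start: the left triangle above and the right triangle below. Their hypotenuses are antiparallel, so rule 2 as stated selects nothing.
\item At $1$ no cell of $S^+$ starts at all.
\end{itemize}
You must state explicitly that on the vertical arm the continuation is the non-reversing one, as the code $C_r$ prescribes. One way is to use the left-hand contour rule, checking that it agrees with the orthogonal choice at $0$.

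Likewise, in the whole $\x_r S$ the rule is not single-valued: four cells start at $0$, and at interior points of $[-1,1]$ both an upper and a lower triangle start. So uniqueness must be argued half by half, through your reduction to $S^+$ and $z\mapsto -z$. It cannot come from ``each successor is unique'' in $\x_r S$.
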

\begin{proof}
The proof is left to the reader.
\end{proof}

For any isosceles right triangle $T$, define its $2^{r+1}$-triangulation $\x_r T$ as the partition into triangular cells induced by an orientation-preserving similarity from $T$ to the upper half of the square $S$.
This similarity maps the chain of triangular cells in the upper half of $S$ to an admissible chain of cells in the triangulation $\x_r T$, beginning and ending at the endpoints of the hypotenuse of $T$.

\begin{lem}\label{chain-decomp}
If $T_1,\dots,T_n$ is an admissible chain, replacing any triangle in the chain
with an admissible chain of cells from its $2^{r+1}$-triangulation yields another admissible chain.
\end{lem}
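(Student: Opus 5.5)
The plan is to check the successor rule only at the two junctions created by the substitution. Everything else is either untouched or internal to the inserted chain. Let $T_1,\dots,T_n$ be admissible, fix $k$, and let $T'_1,\dots,T'_m$ be the admissible chain of cells of $\x_r T_k$. This chain is the image, under the orientation-preserving similarity $\phi\colon S^+\to T_k$ of Lemma \ref{podobie}, of the chain in $\x_r S^+$ given by Lemma \ref{punktir}. Admissibility is a local condition on consecutive pairs: it refers only to terminal/initial vertices and to orthogonality of hypotenuses. Hence the pairs $(T_j,T_{j+1})$ with $j\ne k-1,k$ remain admissible. The pairs $(T'_i,T'_{i+1})$ are admissible because similarities preserve vertices, orientation (hence beginnings and ends) and orthogonality. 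So two things remain to check: the pair $(T_{k-1},T'_1)$ and the pair $(T'_m,T_{k+1})$.

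First I will record two facts about the inserted chain, obtained by transporting them from $S^+$ via $\phi$.

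\emph{Endpoints.} By Lemma \ref{punktir}, the chain in $S^+$ begins at $-1$ and ends at $1$. So $T'_1$ begins at the beginning of $T_k$, and $T'_m$ ends at the end of $T_k$.

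\emph{Hypotenuse directions.} I will read off from the figure/explicit chain in $\x_r S^+$ that the first and last cells have hypotenuses lying on the hypotenuse $[-1,1]$ of $S^+$. Their hypotenuses are then parallel to that of $T_k$, and each is oriented the same way (the cell sits on the same side). Concretely, the first cell is the lower-left triangular half-cell adjacent to $-1$, with hypotenuse on $[-1,1]$, and similarly at $1$. So $T'_1$ and $T'_m$ are homothetic copies of $T_k$ with the same initial, respectively terminal, vertex.

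Now the junctions. At the start, the terminal vertex of $T_{k-1}$ equals the initial vertex of $T_k$, which is the initial vertex of $T'_1$. The successor rule for $T_{k-1}$ in the original chain selected $T_k$: either $T_k$ was the unique triangle starting there, or its hypotenuse is orthogonal to that of $T_{k-1}$. Since $T'_1$ has hypotenuse parallel to that of $T_k$, the same criterion selects $T'_1$.

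The end junction is similar. The end of $T'_m$ is the end of $T_k$, which is the beginning of $T_{k+1}$. Since the hypotenuse of $T'_m$ is parallel to that of $T_k$, orthogonality with $T_{k+1}$ is inherited.

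The main obstacle is making precise what the rules mean for a chain mixing triangles of different sizes: "the initial vertex for some cell" must be read in terms of the actual triangles in the chain, not cells of one fixed triangulation. The rule then effectively reduces to the condition that consecutive triangles share end/beginning and, where a choice exists, have orthogonal hypotenuses. I will adopt this reading, namely that each consecutive pair shares a vertex and the hypotenuses are orthogonal whenever the rule's second clause applies. I will then verify the endpoint-cell parallelism claim carefully, including its orientation, for the explicit chain of Lemma \ref{punktir}. This is the one genuinely combinatorial input.
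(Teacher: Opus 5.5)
\textbf{Verdict: there is a genuine gap.} Your junction analysis is sound. The step that fails is the claim that admissibility is a local condition on consecutive pairs, so that the untouched pairs $(T_j,T_{j+1})$ and the internal pairs $(T'_i,T'_{i+1})$ remain admissible automatically. Under the reading you adopt, the clauses are evaluated by counting the triangles of the chain that begin at a vertex. Whether clause~1 or clause~2 governs a pair therefore depends on \emph{every} triangle of the chain that begins at the shared vertex. The inserted chain creates new initial vertices, notably the right-angle vertex $\phi(i)$ of $T_k$, and these can be shared with other parts of the chain.

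Here is a concrete instance.
In $\x_r S^+$, the consecutive cells $T$ (upper, hypotenuse $[-2^{-r},0]$) and $T'$ (left, hypotenuse $[0,2^{-r}i]$) have the same right-angle vertex $p=2^{-r-1}(i-1)$.
Replacing $T$ by its subchain gives a chain that is admissible, as one checks directly. In it, two consecutive cells $L\to R$ of the block of $T$ run up to $p$ and back down with vertical hypotenuses, and $R$ is the only triangle beginning at $p$.
Now replace $T'$ as well. Its block contains consecutive cells $L'\to U'$ running left to $p$ and back with horizontal hypotenuses.
Two triangles of the chain, $R$ and $U'$, now begin at $p$, so clause~2 forces $L\to U'$ and $L'\to R$.
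Thus both $(L,R)$ and $(L',U')$ violate the rule, although neither is a junction pair of this substitution.
So under your reading the statement is false, not merely unproved. The same configuration shows that the complete refinement used in Theorem~\ref{th-serp} is not admissible if the rules are applied relative to the whole triangulation $\x_r^2S$.

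Hence you must first replace the rule-based notion by a genuinely local one. One option is to require only that the end of each triangle be the beginning of the next; this is all the proof of Theorem~\ref{th-serp} actually uses. Another is to apply the successor rule inside the triangulation of a common parent and treat cross-parent pairs through the parent chain.

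With either definition, your junction analysis is essentially the whole proof, and its ingredients are right. The code $C_r$ begins and ends with the symbol $1$. So $T'_1$ and $T'_m$ are homothetic copies of $T_k$ with parallel, co-oriented hypotenuses, sharing its initial and terminal vertex respectively.

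You should, however, state one check you left implicit: no other cell of the inserted chain begins at the initial vertex of $T_k$, and none begins at its terminal vertex. This holds because in $S^+$ only the first cell begins at $-1$ and no cell begins at $1$. Without it, ``the same criterion selects $T'_1$'' does not follow, since the number of candidates at the junction vertices could change.
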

\begin{proof}
The proof is left to the reader.
\end{proof}

We denote by $\partial\x_r^nS$ the union of the boundaries of the triangular cells in the $n$-fold diagonalization of $S$.
The set $\partial\x_r^\infty S=\x_r^\infty \cap\cup_{n=1}^\infty\partial\x_r^n$.
This union is called the \emph{skeleton} of the Cross $\x_r^\infty S$.
\begin{lem}\label{skeleton} The complement of the skeleton of the Cross $\x_r^\infty S$ is dense in the Cross.
\end{lem}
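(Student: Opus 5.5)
Fix a point $p\in\x_r^\infty S$ and $\varepsilon>0$. The plan is to find a point of the Cross within distance $\varepsilon$ of $p$ that lies on no boundary $\partial\x_r^n S$.

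The skeleton is a countable union of sets $\x_r^\infty S\cap\partial\x_r^n S$. Each is closed in the compact Cross, so by the Baire category theorem it suffices to show that each has empty interior in the Cross. An alternative is an explicit nested construction, which I describe first since it avoids Baire.

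Choose $n$ with $\sqrt2\,2^{-(r+1)n}<\varepsilon$. Let $F$ be a cell of the $n$-fold diagonalization containing $p$. By Lemma~\ref{prop-crest}, $\x_r^\infty S\cap F=\x_r^\infty F$ is a similar copy of the whole Cross, so it is enough to produce, inside the Cross of a single square, a point avoiding all skeleton levels, and then transport it to $F$ by the similarity. Such a similarity maps skeleton levels to skeleton levels, up to the finitely many levels below $n$. Those lower levels meet $F$ only along $\partial F$ and the diagonals of $F$, so the chosen point must also avoid these.

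For the explicit point, work in the self-similar coding. A point of $\x_r^\infty S$ is the limit of a nested sequence of diagonal cells $F_1\supset F_2\supset\cdots$, where each $F_{k+1}$ is one of the $2^{r+2}$ cells of $\x_r F_k$. The triangular cells are the halves of these cells cut by their own diagonals, so $\partial\x_r^k S$ consists of the sides and diagonals of the level-$k$ cells. I will choose the sequence so that the limit point stays a definite fraction of the cell size away from the sides and diagonals of every $F_k$.

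To do this, at each step pick $F_{k+1}$ among the cells of $\x_r F_k$ lying near the center of a fixed triangular half of $F_k$. Then iterate the same relative choice, so the point is the fixed point of one contracting similarity $\phi$ of $S$ onto a diagonalization cell $G$. This point is $q=\lim\phi^k(z)$. One must check that $q$ lies off the sides and diagonals of $G$, and then off those of every $\phi^k(G)$. Since $\phi$ preserves the pattern of sides and diagonals, this reduces to checking that $q$ avoids $\partial S$ and the two diagonals of $S$, the image of $S$'s lines under the $k$-th iterate.

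This is the main obstacle. For $r\ge1$ every diagonalization cell is itself centered on a diagonal of $S$: the cells with $i=j$ lie on the ascending diagonal. So the fixed point of a single map $\phi$ lies on a diagonal, and that simple choice fails. I will therefore alternate between two maps $\phi_1,\phi_2$ onto cells lying on different diagonals, for instance a cell near the center on the ascending diagonal and one near the center on the descending diagonal. The limit then lies on the segment from the center of $S$ toward a point strictly between the diagonals, at a distance bounded below relative to the cell scale.

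The precise bookkeeping is to show that, relative to each $F_k$, the point stays in a compact region missing the sides and diagonals of $F_k$. The region I would use is the image under $\phi_1\phi_2$ of a small disk around the point $(\tfrac12,\tfrac14)$ in the normalized square $[0,1]^2$, which lies inside a triangular half of $S$. The disk must be chosen invariant under the composite map. If this bookkeeping becomes messy, I fall back on the Baire argument. In that version, each $\partial\x_r^n S$ is a finite union of segments. The Cross contains a nondegenerate self-similar copy inside every cell, and this copy is not contained in finitely many lines because it contains both diagonals of every subcell. Hence no relatively open subset of the Cross lies in $\partial\x_r^n S$.
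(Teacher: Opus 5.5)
Your proposal contains a complete proof, namely the Baire argument, and it takes a different route from the paper's. The paper's proof is your first reduction. A similarity of $S$ onto a cell $F$ carries the skeleton into the skeleton, so one point of the Cross off the skeleton, transported into arbitrarily small cells, gives density. The paper asserts the existence of that one point (``the skeleton is not equal to the Cross'') without proof. Your Baire version needs no particular point. Each $\x_r^\infty S\cap\partial\x_r^nS$ is closed. It has empty interior in the Cross because every relatively open set contains some $\x_r^\infty F$, whose subcell diagonals lie on infinitely many distinct lines. A nondegenerate segment inside a finite union of segments must be collinear with one of them; you should state this step explicitly. The Baire argument proves the lemma outright, including the fact the paper takes for granted, but it is non-constructive. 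The paper's transport argument, once supplied with an explicit point, gives an explicit dense set of non-skeleton points.

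Your explicit construction is unfinished, and the obstacle you identify is a misdiagnosis. A homothety or half-turn of $S$ onto a diagonal cell $G$ lying on the diagonal $D$ does fix a point of $D$. A quarter-turn similarity $\phi$ does not. It maps the line $D$ onto the other diagonal line of $G$, and the line $D'$ onto $D$. So a fixed point on $D$ would have to be the center of $G$, and a fixed point on $D'$ would have to be the center of $S$; $\phi$ fixes neither. For example, take $r=1$, $S=[0,1]^2$ and $\phi(z)=\frac38(1+i)+\frac i4\bigl(z-\frac{1+i}2\bigr)$. This map takes $S$ onto $[\frac14,\frac12]^2$ and fixes $q=\frac{7+6i}{17}$, which lies off the sides and both diagonals of $S$. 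Since $\phi^k(q)=q$, the point $q$ lies in every cell $\phi^k(S)$, so it belongs to the Cross. It also lies off the sides and diagonals of every $\phi^k(S)$, so it is off the skeleton. Thus no alternation or invariant-disk bookkeeping is needed. A single quarter-turn map onto a non-corner diagonal cell supplies exactly the point the paper's one-line proof presupposes.
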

\begin{proof}
Since similarities between Crosses preserve the skeleton, the density of the complement follows from the fact that the skeleton is not equal to the Cross.
\end{proof}

A map $f\colon X\to Y$ between topological spaces is called \emph{irreducible} if $f(X')\ne f(X)$ for every proper closed subset $X'\subset X$.

\begin{theorem} For every nonnegative integer $r$, there exists an irreducible H\"older-continuous map $\tau_r$
from the unit interval onto the fractal Cross $\x_r^\infty S$, with H\"older exponent $\frac{r+1}{r+2}$ and H\"older constant
$2^{(r+2)+(r+1)/(r+2)}$, where the Cross is inscribed in the square $S$.
 \label{th-serp}
\end{theorem}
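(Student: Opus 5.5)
We will build $\tau_r$ as a de Rham-type limit of admissible triangle chains, in the same way as the structural parametrization $\zeta_r^+$ of the Zigzag. By Lemma \ref{punktir}, the $2^{r+3}$ triangular cells of $\x_r S$ form an admissible chain $T_1,\dots,T_{2^{r+3}}$ that begins and ends at $-1$. Refining each $T_k$ by the admissible chain of its $2^{r+1}$-triangulation, and using Lemma \ref{chain-decomp}, we obtain an admissible chain of $2^{r+3}\cdot 2^{(r+2)(n-1)}$ triangles whose union is the body of the $n$-fold diagonalization. For each $n$, split $[0,1]$ into $2^{r+3}2^{(r+2)(n-1)}$ equal subintervals and assign the $j$th subinterval to the $j$th triangle of the $n$-th chain. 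These assignments are nested: the subinterval of a triangle at level $n$ is the union of the subintervals of its $2^{r+2}$ children. For $t\in[0,1]$ we then define $\tau_r(t)$ as the intersection of the nested triangles whose intervals contain $t$. The diameters of these triangles shrink by the factor $2^{-(r+1)}$ at each level, so the intersection is a single point. At a common endpoint of two subintervals we use the consecutive-vertex property of admissible chains: the end vertex of one triangle is the initial vertex of the next. Hence both choices give the same point, and $\tau_r$ is well defined.

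\textbf{Surjectivity and irreducibility.} Every point of $\x_r^\infty S$ lies in a nested sequence of diagonalization cells, and every diagonal cell is the union of two triangular cells. So the point lies in a nested sequence of triangular cells from the successive chains, which gives surjectivity by compactness. For irreducibility, take a proper closed $X'\subset[0,1]$. Its complement contains the interval of some level-$n$ triangle $T$. By Lemma \ref{skeleton}, the interior part of $T\cap\x_r^\infty S$ contains a point $p$ off the skeleton. Such a point lies in the interior of a unique triangle at every level, so its preimage is a single point, namely in the interval of $T$. Therefore $p\notin\tau_r(X')$.

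\textbf{H\"older estimate.} Take $t<t'$ and choose $n$ with $|t-t'|$ comparable to the length $\ell_n=2^{-(r+3)}2^{-(r+2)(n-1)}$ of a level-$n$ subinterval. Then $t$ and $t'$ lie in the same or in two adjacent level-$n$ subintervals. Consecutive triangles share a vertex, so $|\tau_r(t)-\tau_r(t')|\le 2\operatorname{diam}T_n$, where $\operatorname{diam}T_n$ is proportional to $2^{-(r+1)n}$. Since $2^{-(r+1)n}=\bigl(2^{-(r+2)n}\bigr)^{(r+1)/(r+2)}$, this gives exponent $\frac{r+1}{r+2}$. The constant is then a bookkeeping exercise: the level-0 triangle has hypotenuse $2$ and legs $\sqrt2$ in the $l_\infty$ or Euclidean metric, and the extra factor $2^{(r+1)/(r+2)}$ arises from the rounding when choosing $n$, as in Lemma \ref{hold-lem} and Lemma \ref{zag-coef}. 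We would first carry out the estimate with a loose constant and then tighten it to $2^{(r+2)+(r+1)/(r+2)}$.

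\textbf{Main obstacle.} The main difficulty is the consistency of the refinement. We need that replacing every triangle by its induced chain produces an admissible chain whose consecutive cells still meet at vertices. We also need that the chain of Lemma \ref{punktir}, which closes up at $-1$, is compatible with taking the upper half $S^+$ as the model triangle. Lemma \ref{chain-decomp} is stated for this, but we would verify it on the combinatorial picture (Figure 5), checking that the similarity of Lemma \ref{podobie} sends initial vertices to initial vertices. In the case $r=0$ we would additionally confirm that the construction reproduces the Sierpinski curve.
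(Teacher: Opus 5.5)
Your construction, the same-or-adjacent-cells H\"older estimate, and the irreducibility argument via non-skeleton points with a single preimage are essentially the paper's proof. Your nested-intersection definition and your surjectivity argument make explicit what the paper leaves implicit: the paper defines $\tau_r$ on the points $k2^{-(r+2)n-1}$, extends it by continuity, and only checks that the image lies in the Cross.

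The gap is the H\"older constant. It is part of the statement, and you defer it with a plan that would not work as written. The computation goes as follows.
\begin{itemize}
\item Choose $k$ with $2^{-(r+2)k-1}\le t'-t<2^{-(r+2)(k-1)-1}$. Then $t,t'$ lie in the same or adjacent level-$(k-1)$ intervals.
\item Let $D_m=\operatorname{diam}(S)\,2^{-(r+1)m}$ be the diameter of a level-$m$ triangular cell. Then $|\tau_r(t)-\tau_r(t')|\le 2D_{k-1}$.
\item On the other side, $(t'-t)^{(r+1)/(r+2)}\ge 2^{-(r+1)k-(r+1)/(r+2)}$.
\end{itemize}
Hence
\[
|\tau_r(t)-\tau_r(t')|\le \operatorname{diam}(S)\,2^{(r+2)+(r+1)/(r+2)}\,|t-t'|^{(r+1)/(r+2)} .
\]
The factor $2\cdot 2^{r+1}$ comes from ``twice the diameter'' and the one-level rounding. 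The factor $2^{(r+1)/(r+2)}$ comes from the $\frac12$ in $\ell_n=2^{-(r+2)n-1}$ (the first chain has $2^{r+3}$ cells), not from rounding.

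So the stated constant is obtained exactly when $S$ has diameter $1$. This is what the paper's computation assumes: it takes level-$m$ cells of diameter $2^{-(r+1)m}$, for example $S=[0,1]^2$ with the $l_\infty$ metric, as in the later use of $\tau_r$.

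Your normalization is hypotenuse $2$, i.e.\ $S$ with diagonal $[-1,1]$. That square has diameter $2$ in both metrics, and its legs have $l_\infty$-length $1$, not $\sqrt2$. Under it the argument gives $2^{(r+3)+(r+1)/(r+2)}$, twice the claimed value. This is a normalization issue, not something a later ``tightening'' fixes. Fix $\operatorname{diam}(S)=1$, and the computation above completes the proof.
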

\begin{proof}
Consider an admissible chain of triangular cells $T_1,\dots,T_{2^{r+3}}$ in the diagonalization $\x_r S$.
Replacing each triangular cell in the chain by its $2^{r+1}$-triangulation gives an admissible chain of cells in the second diagonalization $\x_r^2S$.
Repeating this process gives admissible chains of triangular cells in diagonalizations of every order.
Let $z_0,z_1,\dots,z_{2^{(r+2)n+1}}$ be the initial vertices of the triangular cells in such a chain.
Here, $n$ denotes the order of the diagonalization of $S$.
Define $\tau_r(k2^{-(r+2)n-1})=z_k$ for every integer $k\in[0,2^{(r+2)n+1}]$.
This is well-defined: if $k$ is divisible by $2^{r+2}$, then the subsequence $z_k$ with indices divisible by $2^{r+2}$ is exactly the sequence of initial vertices at the preceding subdivision level,
so the corresponding terms agree with those of the previously defined sequence.

We now prove H\"older continuity of $\tau_r$. Let
$t<t\prime\le1$. Then, for some integer $k$, the following inequalities hold:
 \begin{equation}\label{est-low}
 2^{-(r+2)k-1} \le t'-t <2^{-(r+2)(k-1)-1}
 \end{equation}
The points $\tau_r(t)$ and $\tau_r(t\prime)$ then lie either in the same cell of the $(k-1)$-fold diagonalization or in two neighboring cells.
The distance between their images is therefore at most twice the diameter of a cell in the $(k-1)$-fold diagonalization,
 that is,
 \begin{equation}\label{est-up}
 |\tau_r(t)-\tau_r(t')|\le 2^{-(r+1)(k-1)+1}.
 \end{equation}
Combining inequalities \eqref{est-low} and \eqref{est-up} gives
 Therefore
 \begin{equation}\label{ratio}
 \frac{|\tau_r(t)-\tau_r(t')|^{r+2}}{|t'-t|^{r+1}}\le \frac{2^{-(r+1)(r+2)(k-1)+(r+2)}}{2^{-(r+2)(r+1)k-(r+1)}}=2^{(r+2)^2+(r+1)}
 \end{equation}
Thus, $\tau_r$ is H\"older-continuous with exponent $\frac{r+1}{r+2}$ and constant $2^{(r+2)+(r+1)/(r+2)}$.
This map extends continuously from a dense subset to the entire interval without changing its H\"older exponent or constant.
 \eqref{ratio}.
The image of $\tau_r$ lies in the union of the diagonals of the hypotenuses of triangular cells, and hence in the fractal Cross.

We next prove that $\tau_r$ is irreducible.
By construction, if $[t2^{(r+2)n+1}]=[t\prime2^{(r+2)n+1}]$, then $\tau_r(t)$ and $\tau_r(t\prime)$ lie in the same triangular cell of the $n$-fold diagonalization $\x_r^nS$.
Conversely, if these integer parts differ, then $\tau_r(t)$ and $\tau_r(t\prime)$ lie in different triangular cells.
 $\x_r^n S$.
Distinct cells of $\x_r^nS$ intersect only along a common vertex or side.
Thus, if $\tau_r(t)$ is not in the skeleton of the Cross, it has a unique preimage under $\tau_r$.
By Lemma \ref{skeleton}, points with unique preimages are dense in both the image and the domain,
which proves irreducibility.
\end{proof}

By a similar argument, every point of the Cross skeleton other than a vertex of a triangular cell
has exactly two preimages under $\tau_r$.
Each vertex of a triangular cell has multiplicity at most four.

\paragraph{Approximating polylines.}

A \emph{hypotenuse polyline} consists of the oriented hypotenuses of the triangular cells in an $n$-fold diagonalization of a square.
All hypotenuses are horizontal or vertical, so their directions can be encoded by complex numbers
$\pm1,\pm\i$. Since all hypotenuses have the same length, their directions determine the displacement vector
from the initial to the terminal point of each hypotenuse, and hence to the initial point of the next triangle.
The \emph{chain code} of a hypotenuse polyline is the sequence of hypotenuse directions, with commas and the initial plus sign omitted.
Thus, the chain code of the $2^3$-triangulation of the upper half of $S$ is written as follows.
\begin{equation}\label{kod}
 ( 1+1+\i+\i-\i-\i+1+1)
\end{equation}

Given chain codes $A=(a_1,\dots,a_n)$ and $B=(b_1,\dots,b_m)$, a symbol $x\in\{\pm1,\pm\i\}$, and a natural number $k$,
we define the following operations:
\begin{description}
 \item[addition] $A+B=(a_1,\dots,a_n,b_1,\dots,b_m)$
\item[multiplication] $xA=(xa_1,xa_2,\dots, xa_n)$
\item[repetition] the concatenation of $k$ copies of $A$, denoted by $k\times A$;
\end{description}
With this notation, the chain code of the hypotenuse polyline for the $2^{r+1}$-triangulation of $S^+$ is
\begin{equation}\label{r-code}
 C_r= 2^{r}\times(1)+2^{r}\times(\i)+2^{r}\times(-\i)+2^{r}\times(1)
\end{equation}
The hypotenuse polyline of the $2^{r+1}$-triangulation of any upper triangle has the same chain code.
The code for the lower triangle is obtained by multiplying the upper-triangle code by $-1$;
for the left and right triangles, multiply it by $\i$ and $-\i$, respectively.

Let $C_r^n$ denote the chain code of the hypotenuse polyline of the $n$-fold $2^{r+1}$-triangulation of $S^+$. Then
the following recurrence relation holds for the hypotenuse polylines:
\begin{equation}\label{recur-lom}
 C_r^{n+1}= 2^{r}\times C_r^n +\i 2^{r}\times C_r^n-\i 2^{r}\times C_r^n+2^{r}\times C_r^n
\end{equation}

The image of each hypotenuse polyline lies in the fractal Cross, and the lengths of these polylines tend to infinity.
To use them to approximate the map in Theorem \ref{th-serp}, we can parametrize them by
the ratio of the length traversed to the total length of the polyline.

Every hypotenuse occurs twice in a hypotenuse polyline, since the path traverses each hypotenuse
in both directions. The centers of the square
cells are therefore points of multiplicity four for $\tau_r$.
Alternatively, $\tau_r$ can be approximated by central polylines whose vertices lie at the centroids
of the triangular cells. These polylines are interesting because they have no self-intersections, so
$\tau_r$ can be approximated by embeddings.
The curve $\tau_r$ is closed and, for $r=0$, coincides with the Sierpinski--Knopp curve \cite{Serp}.

\subsection{Fractal Cross arithmetic.}

Integers in the range $[0, 2^{r+1}-1]$ are called $r$-digits.

 The two $r$ digits $a$ and $b$ will be called $r$-\emph{complementary} if $a+b=2^{r+1}-1$.
 We will denote the digit $r$-complementary to the digit $a$ by $a^*$.

\begin{lem}\label{dig-pres} Every cell in the $n$-fold diagonalization $\x_r^n S$ of the unit square $S=[0,1]^2$
has a lower-left corner with coordinates of the form
\begin{equation}\label{vid}
 \sum_{k=1}^{n} (a_k 2^{-k(r+1)}, b_k2^{-k(r+1)}),
 \end{equation}
where $a_k$ and $b_k$ are $r$-digits that either coincide or are $r$-complementary. Conversely, every expression of the form \eqref{vid}
gives the lower-left corner of a cell of $\x_r^n S$.
 \end{lem}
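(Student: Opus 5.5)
We argue by induction on $n$, using the recursive definition of the $n$-fold diagonalization: the cells of $\x_r^{n}S$ are exactly the cells of the $2^{r+1}$-diagonalizations of the cells of $\x_r^{n-1}S$. Equivalently, the matrix of $\x_r^nS$ is the tensor product $\x_r^{\otimes n}$.

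\textbf{Base case $n=1$.} The cells of $\x_rS$ are the cells $F\in S/(2^{r+1}\times2^{r+1})$ with $\x_r(\col F,\row F)=1$. By definition of the matrix $\x_r$, this means $\col F=\row F$ or $\col F+\row F=2^{r+1}-1$. The lower-left corner of $F$ is $(\col F\cdot 2^{-(r+1)},\row F\cdot2^{-(r+1)})$. So the set of lower-left corners is exactly $\{(a2^{-(r+1)},b2^{-(r+1)})\}$ with $a,b$ $r$-digits that coincide or are $r$-complementary. This gives both directions for $n=1$.

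\textbf{Inductive step.} Assume the statement for $n-1$. Let $F$ be a cell of $\x_r^nS$, and let $G$ be the cell of $\x_r^{n-1}S$ containing it. The cell $G$ has side $2^{-(n-1)(r+1)}$ and lower-left corner $g=\sum_{k=1}^{n-1}(a_k2^{-k(r+1)},b_k2^{-k(r+1)})$ of the required form. The map $z\mapsto g+2^{-(n-1)(r+1)}z$ is a homothety with positive coefficient followed by a translation. It sends $S$ onto $G$ and preserves the property ``diagonal cell of the regular $2^{r+1}\times 2^{r+1}$ partition'', since it preserves both the partition and both diagonals. Hence it carries the cells of $\x_rS$ bijectively onto the cells of $\x_rG$. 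Applying this map to the base case, the lower-left corner of $F$ is $g+(a_n2^{-n(r+1)},b_n2^{-n(r+1)})$ with $a_n,b_n$ coinciding or complementary, which is of the form \eqref{vid}.

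\textbf{Converse.} Given an expression \eqref{vid} with $n$ terms, its truncation to $n-1$ terms is, by induction, the lower-left corner $g$ of a cell $G\in\x_r^{n-1}S$. The last pair $(a_n,b_n)$ then selects, through the same homothety, a diagonal cell of $G$ with the given corner. That cell belongs to $\x_r^nS$ by definition.

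The only point needing care is that the homothety preserves which cells count as diagonal, including the descending diagonal. It does, because it involves no reflection: it is orientation- and axis-preserving. One should also note that we never need uniqueness of the representation \eqref{vid}, though it in fact holds, since it is a base-$2^{r+1}$ expansion of the corner coordinates. I expect no serious obstacle; the proof is essentially bookkeeping of the tensor-product structure, consistent with Lemma \ref{3q}, where positions compose as $\col(Q_1,Q_3)=\col(Q_1,Q_2)+2^{r+1}\col(Q_2,Q_3)$.
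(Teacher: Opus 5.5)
Your proof is correct. The forward direction is the same as the paper's: the base case comes from the diagonal condition on first-level cells, and the inductive step adds the first-level corner, scaled by the parent's side $2^{-(r+1)(n-1)}$, to the parent's corner.

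You diverge from the paper on the converse. The paper counts instead of constructing:
\begin{itemize}
\item $\x_r^nS$ has $2^{(r+2)n}$ cells;
\item there are exactly $2^{(r+2)n}$ expressions \eqref{vid}, since each $a_k$ has the two partners $a_k$ and $a_k^*$, which are distinct because $2^{r+1}-1$ is odd;
\item distinct expressions give distinct points, by uniqueness of base-$2^{r+1}$ expansions.
\end{itemize}
So the injection from cells to expressions given by the forward direction is onto.

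Your direct argument truncates the expression, uses induction to get the parent cell $G$, and then picks the diagonal subcell of $G$ indexed by $(a_n,b_n)$. It needs neither the cell count nor uniqueness of expansions, so it is slightly more self-contained. The counting route, in return, gives at no extra cost that cells and expressions correspond bijectively, which you only mention in passing.
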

\begin{proof}
For $n=1$, if the cell has a diagonal on the line $x=y$, then its lower-left corner lies on that diagonal, so $a_1=b_1$. If its diagonal lies on $x+y=1$, the sum of the coordinates of its lower-left corner
is one minus the side length of the cell; hence $a_1+b_1=2^{r+1}-1$.

Assume the claim holds for order $n$. To find the lower-left corners of cells in $\x_r^{n+1}S=\x_r^1(\x_r^n S)$, add to the coordinates
of the lower-left corner of each order-$n$ cell, scaled by $2^{-(r+1)n}$, the corresponding coordinates
of the lower-left corners in the first diagonalization $\x_r^1S$, which have the required form by the preceding argument.

The number of cells in $\x_r^nS$ equals the number of expressions of the form \eqref{vid}, and these expressions give distinct sums.
It follows that the sums are in one-to-one correspondence with the lower-left corners of the cells.

 \end{proof}

We denote the set of two signs $\{+,-\}$ by $\pm$.
For a digit $a$ and a sign $\sigma\in\pm$, define
 \begin{equation}\label{comp}
  a^{\sigma}=\left\{
    \begin{array}{ll}
    a, & \hbox{if $\sigma=+$;} \\
    a^*, & \hbox{if $\sigma=-$.}
\end{array}
   \right.
 \end{equation}

Let $\x_r^nS(x)$ denote the subset of $\bigcup\x_r^nS$ consisting of points with abscissa $x$.

\begin{lem} If $\x_r^nS(x)$, for $x\in[0,1]$, contains a point of the form $\sum_{k=1}^{n}a_k2^{-k(r+1)}$, then it also contains
every point of the form
 \begin{equation}\label{sign-vid}
 \sum_{k=1}^{n} a_k^\pm 2^{-k(r+1)},
 \end{equation}
 where the signs for different $k$ are chosen independently.
 \label{intersect-vert}
\end{lem}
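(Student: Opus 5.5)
The statement is slightly garbled as printed; I read it as follows. If the vertical section $\x_r^nS(x)$ contains the point with abscissa $x$ and ordinate $y=\sum_{k=1}^{n}a_k2^{-k(r+1)}$, then it also contains every point $(x,\sum_k a_k^{\pm}2^{-k(r+1)})$. The plan is to realize each single-digit complementation $a_k\mapsto a_k^*$, with all other digits fixed, as a composition of two reflections in horizontal lines. Each reflection should preserve both the body $\bigcup\x_r^nS$ and the vertical line through $x$. Since the signs are chosen independently, it suffices to handle one index $k$ at a time and then compose these maps in any order.

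\textbf{Step 1 (invariance under local reflections).} For $0\le m\le n$, let $F$ be a cell of $\x_r^mS$ (for $m=0$, $F=S$). By the self-similarity in Lemma \ref{prop-crest}, applied at finite level as in its proof, the part of $\bigcup\x_r^nS$ lying in $F$ is the homothetic image of $\bigcup\x_r^{n-m}S$. By item 2 of that lemma, the same argument applied to the finite diagonalizations shows this image is invariant under every isometry of $F$. In particular it is invariant under the reflection $\rho_F$ of $F$ in its horizontal midline. This reflection fixes abscissas, so it maps $\x_r^nS(x)\cap F$ into $\x_r^nS(x)$.

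\textbf{Step 2 (complementing digit $k$).} Let $P=(x,y)$ be the given point. Pick a cell $G$ of order $k-1$ containing $P$, with bottom ordinate $\beta$, and write
\[
y-\beta=a_k2^{-k(r+1)}+\varepsilon,\qquad 0\le\varepsilon\le s:=2^{-k(r+1)}.
\]
This uses $\beta=\sum_{j<k}a_j2^{-j(r+1)}$; Lemma \ref{dig-pres} lets me choose $G$ so that its corner has exactly these leading digits. Since $|G|=2^{r+1}s$, the reflection $\rho_G$ sends $y$ to
\[
\beta+a_k^*s+(s-\varepsilon).
\]
The image point lies in an order-$k$ cell $H\subset G$ with bottom ordinate $\beta+a_k^*s$. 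Applying $\rho_H$ sends it to $\beta+a_k^*s+\varepsilon$. This is exactly $y$ with the $k$th digit complemented and all other digits unchanged. By Step 1 the new point still lies in $\x_r^nS(x)$. Iterating over the indices $k$ with sign $-$ gives every point of the form \eqref{sign-vid}.

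\textbf{Main obstacle.} The delicate point is the bookkeeping at cell boundaries. The ordinate $y$ is a grid point, so $\varepsilon\in\{0,s\}$, and $P$ may lie on the common side of two cells. I must choose $G$ and $H$ so that they actually contain the point and have the asserted digit expansion of their lower edges. I would handle this with Lemma \ref{dig-pres}, noting that any cell of the appropriate order containing the point works. The reflections are isometries of closed cells, so boundary points are mapped correctly. The only thing to verify is that the digit bookkeeping $\beta+a_k^*s$ is unaffected by this choice, since the formula in Step 2 holds for either admissible value of $\varepsilon$.
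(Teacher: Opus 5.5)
Under your reading of the hypothesis, where $(x,y)$ is an arbitrary point of the section with $y=\sum_k a_k2^{-k(r+1)}$, the statement is false, so the proof has a genuine gap. Take $r=1$, $n=1$ and $0<x<\tfrac14$. The column of $x$ in $\x_1S$ consists of the cells in rows $0$ and $3$, so $\x_1^1S(x)=\{x\}\times([0,\tfrac14]\cup[\tfrac34,1])$. It contains the ordinate $\tfrac14$ (digit $a_1=1$, on the top side of the row-$0$ cell) but not $a_1^*\cdot 2^{-2}=\tfrac12$.

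In your Step 2 the failing sentence is ``the image point lies in an order-$k$ cell $H\subset G$ with bottom ordinate $\beta+a_k^*s$''. Here $G=S$, and $\rho_S$ sends $\tfrac14$ to $\tfrac34$. The grid square $[0,\tfrac14]\times[\tfrac12,\tfrac34]$ is not a cell of $\x_1S$, so Step 1 does not apply to its midline reflection; indeed that reflection produces $(x,\tfrac12)$, which is not in the body.

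The choice of $G$ has the same defect. Lemma \ref{dig-pres} tells you that $\sum_{j<k}a_j2^{-j(r+1)}$ is the corner ordinate of \emph{some} cell. It does not say it is the corner ordinate of a cell in the column of $x$ that contains $P$.

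So your dismissal of the boundary issue is wrong. The final formula $\beta+a_k^*s+\varepsilon$ does not depend on the choice of cells, but the existence of $G$ and $H$ does. It requires $P$ to lie on the lower side of a cell of $\x_r^nS$ with corner ordinate $y$, and your hypothesis does not give that. (Also, $\varepsilon\in\{0,s\}$ holds only for $k=n$.)

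The missing ingredient is the hypothesis the paper actually uses, both in its proof and via ``on the lower boundary of a cell'' in Lemma \ref{perenos}: the vertical line through $x$ meets a cell $F$ of $\x_r^nS$ whose lower side lies at ordinate $y$. With it, your reflection idea works if you track the cell instead of the point.
\begin{itemize}
\item Let $G$ be the ancestor of $F$ of order $k-1$, and let $H=\rho_G(F_k)$, where $F_k$ is the ancestor of order $k$. Then $H$ is a genuine cell with bottom ordinate $\beta+a_k^*s$.
\item $\rho_G(P)$ lies on the top side of the cell $\rho_G(F)\subset H$.
\item $\rho_H\rho_G(P)$ lies on the lower side of the cell $\rho_H\rho_G(F)$, so the invariant survives and you can iterate over $k$.
\end{itemize}

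The paper's route is more direct. A column of $\x_rS$ contains the cell in slot $a$ exactly when it contains the one in slot $a^*$, and induction on $n$ does the rest. Equivalently, by Lemma \ref{dig-pres} the corner of $F$ is $(\sum_j b_j2^{-j(r+1)},\sum_j a_j2^{-j(r+1)})$ with $b_j\in\{a_j,a_j^*\}$. Since $a_j^{\pm}\in\{b_j,b_j^*\}$, the converse half of that lemma makes $(\sum_j b_j2^{-j(r+1)},\sum_j a_j^{\pm}2^{-j(r+1)})$ the corner of a cell in the same column.
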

\begin{proof}
Indeed, if a vertical line intersects a cell in the $2^{r+1}$-diagonalization whose lower side has ordinate $a2^{-(r+1)}$,
then it also intersects the cell whose lower side has ordinate $(2^{r+1}-a-1)2^{-(r+1)}$. Thus, the lemma holds for a single
$2^{r+1}$-diagonalization.
The general statement follows by induction on the order $n$.
\end{proof}

\begin{lem}\label{sum-res}
For any sequence of $r+1$ digits $(a_1,\dots,a_{r+1})$, distinct sums $\sum_{k=1}^{r+1}a_k^\pm2^{k-1}$ have distinct residues modulo $2^{r+1}$.
Every residue modulo $2^{r+1}$ is represented by exactly one sum of this form.
 \end{lem}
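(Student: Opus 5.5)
The plan is a counting argument combined with a parity observation, in the same spirit as Lemma \ref{arif1}. There are exactly $2^{r+1}$ sign vectors $(\sigma_1,\dots,\sigma_{r+1})\in\pm^{r+1}$ and exactly $2^{r+1}$ residues modulo $2^{r+1}$. So both assertions will follow once I show that distinct sign vectors give sums with distinct residues. Injectivity of the map from sign vectors to residues then also gives surjectivity by cardinality. It therefore shows that every residue is represented by exactly one such sum.

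The key observation is parity. By definition $a^*=2^{r+1}-1-a$, and $2^{r+1}-1$ is odd, so $a$ and $a^*$ always have opposite parity. In particular $a\ne a^*$, and the difference $a-a^*=2a-2^{r+1}+1$ is odd. Hence the two choices $a^+$ and $a^-$ of a digit never coincide. I also record that $a^*\equiv -1-a \pmod{2^{r+1}}$, although only the parity fact is needed.

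For injectivity, take two distinct sign vectors $\sigma,\sigma'$ and let $j$ be the smallest index with $\sigma_j\ne\sigma'_j$. In the difference of the two sums, all terms with $k<j$ cancel. The $j$th term contributes $\pm2^{j-1}(a_j-a_j^*)$, which is $2^{j-1}$ times an odd number. Every term with $k>j$ is divisible by $2^{j}$. Thus the difference is exactly divisible by $2^{j-1}$ and not by $2^{j}$. Since $j\le r+1$, we have $j-1\le r<r+1$, so the difference is not divisible by $2^{r+1}$, and the residues differ.

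No step here is a serious obstacle. The only point needing care is the bookkeeping of which power of two survives, namely that the minimal differing index $j$ satisfies $j-1<r+1$. An alternative is to mimic the inductive proof of Lemma \ref{arif1}: determine $\sigma_1$ from the residue modulo $2$, then $\sigma_2$ from the residue modulo $4$, and so on. Each step is forced because $a_k^+$ and $a_k^-$ have opposite parity. This yields existence and uniqueness simultaneously.
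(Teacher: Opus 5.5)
Your proof is correct and is essentially the paper's argument. The paper shows by induction on $k$ that two congruent sums must have identical sign vectors, using that $a$ and $a^*$ have opposite parity so the term $2^{m}a_{m+1}^{\pm}$ is detected modulo $2^{m+1}$; this is your minimal-differing-index argument phrased as an induction, and the paper then gets surjectivity from the same count of $2^{r+1}$ sums against $2^{r+1}$ residues.
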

 \begin{proof}
 Assume that \begin{equation}\label{sum-res1}
     \sum_{k=1}^{r+1} a_k^{\sigma(k)} 2^{k-1}= \sum_{k=1}^{r+1} a_k^{\theta(k)} 2^{k-1}\mod 2^{r+1}
     \end{equation}
We prove by induction that $\sigma(k)=\theta(k)$.
First, $\sigma(1)=\theta(1)$ because $a_1^{\sigma(1)}$ and
$a_1^{\theta(1)}$ have the same parity by \eqref{sum-res1}, so they cannot be complementary with respect to $r$.

Assume that $\theta(k)=\sigma(k)$ for all $k\le m$. Then \eqref{sum-res1}
implies the following equality of partial sums:
 \begin{equation}\label{sum-res2}
 \sum_{k=1}^{m+1} a_k^{\sigma(k)} 2^{k-1}=\sum_{k=1}^{m+1} a_k^{\theta(k)} 2^{k-1} \mod 2^{m+1}
 \end{equation}
The partial sums can differ only in their final terms. By the induction hypothesis,
these terms must have the same parity, which implies
$\sigma(m+1)=\theta(m+1)$. Hence \eqref{sum-res1} implies that the sign sequences agree.

There are $2^{r+1}$ such sums, equal to the number of residues modulo $2^{r+1}$;
therefore, the sums are in one-to-one correspondence with those residues.
 \end{proof}

For positive real numbers $x,y$, define
 $$ x\mod y= x-[x/y] $$

\begin{lem}\label{perenos}
For any natural number $n$, any $x_1,x_2,\dots,x_{r+1}\in[0,1]$, and any $Y\in[0,1]$, there exist points $y_k\in\x_r^nS(x_k)$ such that
 $$|Y-\sum_{k=1}^{r+1} y_k 2^{k-1}|\mod 1 \le 2^{-(r+1)n}.$$
\end{lem}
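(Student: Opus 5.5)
The plan is to work at the level of lower-left corners of cells of the $n$-fold diagonalization and to use the digit representation from Lemma \ref{dig-pres}. For each $k$, choose a cell of $\x_r^nS$ whose vertical column of width $2^{-(r+1)n}$ contains $x_k$. By Lemma \ref{dig-pres}, its lower-left corner has abscissa $\sum_{j=1}^n a_{k,j}2^{-j(r+1)}$ and ordinate $\sum_j b_{k,j}2^{-j(r+1)}$ with $b_{k,j}=a_{k,j}^{\pm}$. Lemma \ref{intersect-vert} then says that all ordinates $\sum_j a_{k,j}^{\sigma_{k,j}}2^{-j(r+1)}$, with independently chosen signs, are lower-left ordinates of cells in the same column. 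Hence each of them is within $2^{-(r+1)n}$ of a point of $\x_r^nS(x_k)$; in fact the segment above it of length $2^{-(r+1)n}$ meets the cell.

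Next I will choose the signs so that $\sum_k 2^{k-1}y_k$ matches $Y$ modulo $1$ up to one unit of $2^{-(r+1)n}$. Write $N=\sum_{k=1}^{r+1}2^{k-1}\sum_j a^{\sigma_{k,j}}_{k,j}2^{(n-j)(r+1)}$, an integer, so that the sum of ordinates is $N2^{-(r+1)n}$. The target is $N\equiv M \pmod{2^{(r+1)n}}$, where $M=[2^{(r+1)n}Y]$.

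I will determine the signs digit-block by digit-block, starting from the least significant block $j=n$ and moving up, much as in Lemma \ref{arif1}. At step $j=n$, Lemma \ref{sum-res} applied to the digits $(a_{1,n},\dots,a_{r+1,n})$ gives unique signs with $\sum_k a_{k,n}^{\sigma}2^{k-1}\equiv M \pmod{2^{r+1}}$. The carry $c=(\sum_k a_{k,n}^\sigma 2^{k-1}-M)/2^{r+1}$ is then passed up, and at block $j=n-1$ the same lemma is applied with target $(M-\text{lower part})/2^{r+1}$ reduced modulo $2^{r+1}$. Continuing through $j=1$ makes $N\equiv M$ modulo $2^{(r+1)n}$. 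Dividing by $2^{(r+1)n}$ then gives $\sum_k 2^{k-1}y_k\equiv M2^{-(r+1)n}$ modulo $1$, and $0\le Y-M2^{-(r+1)n}<2^{-(r+1)n}$.

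The main obstacle is that the chosen ordinates are lower-left corners of cells, not points of $\x_r^nS(x_k)$ themselves. I would handle this by taking $y_k$ on the vertical line $x=x_k$ inside the chosen cell, which sits above that corner by an amount $\delta_k\in[0,2^{-(r+1)n}]$. This introduces an error $\sum 2^{k-1}\delta_k$, which could be as large as $(2^{r+1}-1)2^{-(r+1)n}$ and would break the stated bound.

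To fix this, I will use that every cell contains both of its diagonals, so the line $x=x_k$ meets the cell at heights corner$+d_k$ and corner$+(2^{-(r+1)n}-d_k)$, where $d_k$ is the horizontal offset of $x_k$ from the corner. Alternatively, the whole vertical segment of the cell is available. With a full segment of freedom $[0,2^{-(r+1)n}]$ for $y_0$ (the $k=1$ term, with weight $1$), I can absorb the error. Concretely, I first fix $y_k$ for $k\ge2$ as points of their cells, which gives a real offset $E$. I then redo the digit-sign selection with $M$ replaced by $[2^{(r+1)n}(Y-E)]$, but only for the $k=1$ coordinate plus the sign choices. This is circular, because the signs change which cell is used. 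I would therefore resolve it by noting that the offsets $\delta_k=x_k-(\text{cell abscissa})$ are sign-independent when taken along the ascending diagonal, or complementary along the descending one, and choose $y_k$ on a diagonal so that the offset is determined by $x_k$ before the signs are chosen. If that bookkeeping fails, I expect the stated $2^{-(r+1)n}$ to be attainable at least up to a constant factor, which suffices for the density argument that presumably follows.
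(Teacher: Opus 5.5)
Your core argument is the paper's own. You fix, for each $k$, a column of $\x_r^nS$ containing $x_k$. You use Lemma \ref{dig-pres} (or \ref{intersect-vert}) to see that the lower-left ordinates available in that column are exactly $\sum_j a_{k,j}^{\pm}2^{-j(r+1)}$ with independent signs. You then choose the signs block by block from $j=n$ upward via Lemma \ref{sum-res}, passing the carry up. This yields $\sum_k 2^{k-1}y_k\equiv M2^{-(r+1)n}\pmod 1$ with $M=[2^{(r+1)n}Y]$, and the lemma follows.

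The gap is in your last two paragraphs, and it is self-inflicted: the ``main obstacle'' does not exist. The set $\x_r^nS(x)$ consists of points of the body $\bigcup\x_r^nS$, which is a union of closed \emph{solid} squares, not of their diagonals. If the chosen cell is $[a,a+h]\times[b,b+h]$ with $x_k\in[a,a+h]$, then $(x_k,b)$ lies on its bottom side. So the corner ordinate $b$ itself belongs to $\x_r^nS(x_k)$. This is exactly what the paper does: it takes $y_k$ ``on the lower boundary of a cell''. Hence $\delta_k=0$, and your congruence already gives the stated bound $2^{-(r+1)n}$.

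As written, though, the proposal never concludes. You declare the corner ordinates inadmissible and say the resulting error would break the bound. You call your first repair circular, and leave the diagonal-based repair unfinished. You then fall back on a bound ``up to a constant factor'', which does not prove the lemma as stated.

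Even if you insisted on a nonzero offset, there would be no circularity. Every cell of the fixed column meets the line $x=x_k$ in the full segment $\{x_k\}\times[b,b+h]$. So any offsets $\delta_k\in[0,h]$ fixed in advance do not depend on the sign choices. The error $\sum_k 2^{k-1}\delta_k$ could then be absorbed by replacing $Y$ with $Y-\sum_k 2^{k-1}\delta_k$ before defining $M$.

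The diagonals of the cells play no role at finite level $n$. Passing from the bodies $\bigcup\x_r^nS$ to the Cross itself is done afterwards, by compactness, in Lemma \ref{lem-razvert}.
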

 \begin{proof}
Choose digits $a_1,\dots,a_n$ such that $\left|Y-\sum_{k=1}^{n}a_k2^{-(r+1)k}\right|<2^{-n(r+1)}$.
It is enough to find points $y_k\in\x_r^nS(x_k)$ satisfying
\begin{equation}\label{p1}
 \sum_{k=1}^{r+1} y_k 2^{k-1}=\sum_{k=1}^{n} a_k2^{-(r+1)k} \mod 1
\end{equation}

Choose $y_k\in\x_r^nS(x_k)$ on the lower boundary of a cell of $\x_r^nS$.
Then $y_k=\sum_{j=1}^{n}y_{kj}2^{-(r+1)j}$ for suitable digits $y_{kj}$. By Lemma \ref{intersect-vert},
every point of the form $\sum_{j=1}^{n}y_{kj}^{\pm}2^{-(r+1)j}$ also belongs to $\x_r^nS(x_k)$.
Substituting $\sum_{j=1}^{n}y_{kj}^{\pm}2^{-(r+1)j}$ into \eqref{p1} and changing the order of summation gives
the equation
\begin{equation}\label{p2}
 \sum_{j=1}^{n} 2^{-(r+1)j} \sum_{k=1}^{r+1} 2^{k-1}y_{kj}^{\sigma(kj)} =\sum_{k=1}^{n} a_k2^{-(r+1)k} \mod 1,
\end{equation}
where the signs $\sigma(kj)$ are to be determined.
We determine the signs $\sigma(kj)$ successively, starting with $l=n$ and decreasing $l$, so that
the congruence
\begin{equation}\label{p3}
 \sum_{j=l}^{n} 2^{-(r+1)j} \sum_{k=1}^{r+1} 2^{k-1}y_{kj}^{\sigma(kj)} =\sum_{k=l}^{n} a_k2^{-(r+1)k} \mod 2^{-(l-1)(r+1)}
\end{equation}

By Lemma \ref{sum-res}, we can choose the signs $\sigma(kn)$ so that
\begin{equation}\label{p4}
 \sum_{k=1}^{r+1}2^{k-1} y_{kn}^{\sigma(kn)}=a_n \mod 2^{r+1}
\end{equation}
Multiplying this equality by $2^{-(r+1)n}$ gives \eqref{p3}, establishing the base case of the induction.
Assume the signs $\sigma(kj)$ have been chosen so that \eqref{p3} holds for some $l\le n$.
When $l$ is decreased by one, the following term is added to the left-hand side:
\begin{equation}\label{p44}
 2^{-(r+1)(l-1)}\sum_{k=1}^{r+1} 2^{k-1} y_{k(l-1)}^{\sigma(k,(l-1))}
\end{equation}
and $a_{l-1}2^{-(r+1)(l-1)}$ is added to the right-hand side. Since the difference between the two sides has the form
$m2^{-(r+1)(l-1)}$ for some integer $m$, the induction step is complete if we choose signs $\sigma(k,l-1)$ satisfying
\begin{equation}\label{p5}
 \sum_{k=1}^{r+1} 2^{k-1} y_{k(l-1)}^{\sigma(k,(l-1))}=a_{l-1} +m \mod 2^{r+1}
\end{equation}
Such signs exist by Lemma \ref{sum-res}.
 \end{proof}

\begin{lem}\label{lem-razvert}
For any $x_1,\dots,x_{r+1}$ in the unit interval and any $Y\in[0,1]$,
there exist $y_1,\dots,y_{r+1}$ such that the following conditions hold:
\begin{enumerate}
 \item $(x_i,y_i)\in \x^\infty_r [0,1]^2$ for any integer $i\in [1,r+1]$;
 \item $\sum_{j=1}^{r+1} 2^{j-1} y_j =Y \mod 1$
 \end{enumerate}
\end{lem}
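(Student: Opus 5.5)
The plan is a compactness argument built on Lemma \ref{perenos}, parallel to the proof of Theorem \ref{th-razvert}. Fix $x_1,\dots,x_{r+1}\in[0,1]$ and $Y\in[0,1]$. For every natural $n$, Lemma \ref{perenos} provides ordinates $y_k(n)$ with $y_k(n)\in\x_r^nS(x_k)$, that is, $(x_k,y_k(n))\in\bigcup\x_r^n[0,1]^2$, and
$$\Bigl|Y-\sum_{k=1}^{r+1}2^{k-1}y_k(n)\Bigr|\bmod 1\le 2^{-(r+1)n}.$$
Here the quantity on the left is read as the distance from $Y-\sum_k 2^{k-1}y_k(n)$ to the nearest integer, i.e.\ the distance on the circle $\R/\Z$. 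That reading is what the proof of Lemma \ref{perenos} actually gives, and I will use it in that form.

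All $y_k(n)$ lie in $[0,1]$, so I can pass to a subsequence $n_m$ along which $y_k(n_m)\to y_k$ for every $k$ simultaneously. Let $N$ be a fixed index. The bodies $\bigcup\x_r^n[0,1]^2$ decrease in $n$, so $(x_k,y_k(n_m))\in\bigcup\x_r^N[0,1]^2$ whenever $n_m\ge N$. This body is a finite union of closed squares, hence closed, and therefore $(x_k,y_k)\in\bigcup\x_r^N[0,1]^2$. Since $N$ was arbitrary, $(x_k,y_k)$ lies in the intersection, which by definition is the Cross $\x_r^\infty[0,1]^2$. This gives condition 1.

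For condition 2, the map $(y_1,\dots,y_{r+1})\mapsto\sum_k 2^{k-1}y_k$ followed by projection to $\R/\Z$ is continuous. The circle distance from $Y$ to the image of $(y_k(n_m))_k$ is at most $2^{-(r+1)n_m}$, which tends to $0$. In the limit, $\sum_k 2^{k-1}y_k\equiv Y\pmod 1$.

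The only delicate point is the interpretation of the ``$\bmod 1$'' estimate in Lemma \ref{perenos}. If it were read literally with the paper's definition $x\bmod y=x-[x/y]$, a value of $Y$ close to an integer could cause trouble. To avoid this I will state explicitly that the lemma's proof yields an exact congruence $\sum_k 2^{k-1}y_k(n)\equiv\sum_{j\le n}a_j2^{-(r+1)j}\pmod 1$, where the dyadic approximant $\sum_{j\le n}a_j2^{-(r+1)j}$ is within $2^{-(r+1)n}$ of $Y$. This is precisely the circle-distance bound used above. Everything else is routine compactness.
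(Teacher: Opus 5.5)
Your proposal is correct and follows the paper's proof: apply Lemma \ref{perenos} for every $n$, pass to a subsequence along which all $y_k(n)$ converge, and take the limits as the $y_k$. What you add is detail the paper leaves implicit, namely the nested closed bodies for condition~1 and the circle-distance reading of the $\bmod 1$ estimate, which is exactly what the proof of Lemma \ref{perenos} gives, for condition~2.
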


\begin{proof}
 Let $S=[0,1]^2$.
For fixed $x_1,\dots,x_{r+1}$ in the unit interval and any natural number $n$,
choose points $y_k(n)\in\x_r^nS(x_k)$ as provided by Lemma \ref{perenos}. There is an increasing sequence
of integers $n_i$ along which each sequence $y_k(n_i)$ converges to a limit $y_k$. These limits satisfy
the conditions of the lemma.
\end{proof}

For each natural number $r$, define the Stong map $\sigma_r\colon\mathbb{C}^{r+1}\to\R^{r+2}$ by
the formula
\begin{equation}\label{sigma}
 \sigma_r(z_0,z_1,\dots,z_{r})=(\re z_0,\dots,\re z_r,\sum_{k=0}^{r}2^{k}\im z_k)
 \end{equation}

Let $\tau_r$ be the map from the unit interval to an $r$-fold Cross inscribed in a unit square in
the complex plane, constructed in the proof of Theorem \ref{th-serp}.
Let $\tau_r^{r+1}\colon I^{r+1}\to\x_r^{r+1}\subset\mathbb{C}^{r+1}$ denote the $(r+1)$-fold Cartesian power of $\tau_r$.

\begin{theorem} The composition of $\tau_r^{r+1}$ and $\sigma_r$ maps the cube $[0,1]^{r+1}$ into
$[0,1]^{r+1}\times\R$. Composing this map with the covering $\pi\colon[0,1]^{r+1}\times\R\to[0,1]^{r+1}\times\R/\Z$
gives a surjective map with H\"older exponent $\frac{r+1}{r+2}$ and constant $(2^{r+1}-1)2^{(r+2)+(r+1)/(r+2)}$.
 \end{theorem}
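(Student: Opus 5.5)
The proof splits into three independent parts: the image lies in $[0,1]^{r+1}\times\R$, the composed map satisfies the H\"older estimate, and it is surjective onto the cylinder. Throughout, $\tau_r$ denotes the parametrization of the Cross inscribed in the unit square $[0,1]^2$. It is obtained from the map of Theorem \ref{th-serp}, where the Cross is inscribed in $S$ with diagonal $[-1,1]$, by an affine similarity. This similarity has ratio $\frac{1}{\sqrt2}$ and rotates by $45^\circ$, and it maps $\x_r^\infty S$ onto $\x_r^\infty[0,1]^2$ by item 2 of Lemma \ref{prop-crest} together with self-similarity. The first task is therefore to check that the H\"older constant $2^{(r+2)+(r+1)/(r+2)}$ of Theorem \ref{th-serp} survives this normalization in the $l_\infty$ metric. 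Ratio $\frac1{\sqrt2}$ combined with the passage from the Euclidean to the $l_\infty$ norm does not increase distances, so this holds. Since $\tau_r(t)\in[0,1]^2$, each $\re\tau_r(t_k)\in[0,1]$, and the first claim follows at once.

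For the H\"older estimate, I will factor the composition as $\pi\circ\sigma_r\circ\tau_r^{r+1}$ and use the $l_\infty$ metric on all spaces. The Cartesian power $\tau_r^{r+1}\colon[0,1]^{r+1}\to\C^{r+1}$ has the same exponent and constant as $\tau_r$ in $l_\infty$, because the maximum over coordinates commutes with the coordinatewise bound $|\tau_r(t_k)-\tau_r(t_k')|\le\kappa|t_k-t_k'|^h\le\kappa|t-t'|^h$. The map $\sigma_r$ is linear. Its first $r+1$ coordinates are $1$-Lipschitz, and its last coordinate has Lipschitz constant $\sum_{k=0}^r2^k=2^{r+1}-1$ with respect to the $l_\infty$ norm of the imaginary parts. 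The covering $\pi$ is non-expansive for the quotient metric on $\R/\Z$. Multiplying these constants gives the stated constant $(2^{r+1}-1)2^{(r+2)+(r+1)/(r+2)}$ and the exponent $\frac{r+1}{r+2}$.

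Surjectivity is where the real content lies, and it reduces to Lemma \ref{lem-razvert}. Given a target point $(x_1,\dots,x_{r+1},Y)$ with $Y\in[0,1)$, I obtain from that lemma values $y_j$ with $(x_j,y_j)\in\x_r^\infty[0,1]^2$ and $\sum_j 2^{j-1}y_j\equiv Y\bmod 1$. To realize these points I need $\tau_r$ to map onto the whole Cross, which is part of Theorem \ref{th-serp}. So I can choose $t_j$ with $\tau_r(t_j)=x_j+\i y_j$. The index shift between Lemma \ref{lem-razvert}, indexed $1,\dots,r+1$ with weights $2^{j-1}$, and $\sigma_r$, indexed $0,\dots,r$ with weights $2^k$, is harmless. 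The images are then $(x_1,\dots,x_{r+1},\sum 2^{j-1}y_j)$, whose projection under $\pi$ is the target.

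The main obstacle I anticipate is the bookkeeping around normalizations. One issue is making sure the Cross used in Lemma \ref{lem-razvert}, inscribed in $[0,1]^2$, is exactly the image of the $\tau_r$ in the statement rather than the Cross in $S$. The other is making sure the H\"older constant is stated for $l_\infty$. If the rescaling does not preserve the constant exactly, I would absorb the discrepancy by noting that the Euclidean-to-$l_\infty$ comparison and the factor $\frac1{\sqrt2}$ only decrease distances.
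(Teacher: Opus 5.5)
Your proposal is correct and follows the paper's proof. Surjectivity comes from Lemma \ref{lem-razvert} together with $\tau_r$ mapping onto the Cross, and the H\"older constant is the constant of $\tau_r^{r+1}$ times the $l_\infty$-Lipschitz constant $2^{r+1}-1$ of the Stong map, with $\pi$ non-expansive. Your extra bookkeeping for the normalization $S\to[0,1]^2$ is something the paper leaves implicit, though the Cross goes to the Cross because the diagonalization construction commutes with similarities, not because of item 2 of Lemma \ref{prop-crest}.
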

\begin{proof}
Lemma \ref{lem-razvert} implies that the composition $\pi\circ\sigma_r\circ\tau_r^{r+1}$ is surjective.
By Theorem \ref{th-serp}, the map $\tau_r^{r+1}$ is H\"older-continuous with exponent $\frac{r+1}{r+2}$. Since the Stong map and the covering are Lipschitz, the composition has the same H\"older exponent.
The H\"older constant of the composition is the product of the H\"older constant of $\tau_r^{r+1}$,
namely $2^{(r+2)+(r+1)/(r+2)}$, and the Lipschitz constant of the Stong map, namely $(2^{r+1}-1)$,
because the covering map $\pi$ does not increase distances.
\end{proof}

\subsection{Cross and Squiggle}

\begin{lem} \label{arif2}
Let $(x_i^+,x_i^-)$ and $(y_i^+,y_i^-)$, $i=1,\dots,r$, be two sequences of pairs of integers such that
each pair $x_i^+,x_i^-$ has opposite parity, whereas each pair $y_i^+,y_i^-$ has the same parity but distinct residues modulo four.
Then every integer $N\in[0,4^r-1]$ has a unique representation
$N=\sum_{i=1}^{r}4^{i-1}(x_i^{\sigma(i)}+y_i^{\theta(i)})\mod 4^r$, where $\sigma(i),\theta(i)\in \pm$.
\end{lem}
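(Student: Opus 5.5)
The plan is to show that the map from sign choices to residues is injective, and then conclude by counting. There are $2^{2r}=4^r$ sign vectors $(\sigma(1),\theta(1),\dots,\sigma(r),\theta(r))$ and exactly $4^r$ residues modulo $4^r$. So existence follows from uniqueness, and it suffices to prove that distinct sign vectors give sums with distinct residues modulo $4^r$. This is the same scheme as in Lemma \ref{sum-res}, now with base $4$ and two sign choices per digit.

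The key local fact concerns one pair. I claim that the four numbers $s(\sigma,\theta)=x^{\sigma}+y^{\theta}$, with $\sigma,\theta\in\pm$, form a complete residue system modulo $4$.
\begin{itemize}
\item The parity of $s$ is determined by $\sigma$ alone. Indeed, $y^+\equiv y^-\pmod 2$, while $x^+\not\equiv x^-\pmod 2$, so $\sigma$ fixes $s\bmod 2$.
\item For fixed $\sigma$, the two values $x^\sigma+y^+$ and $x^\sigma+y^-$ differ by $y^+-y^-$. This difference is $\equiv2\pmod 4$, because $y^\pm$ have the same parity but distinct residues modulo four.
\end{itemize}
Hence the two values of $\theta$ give the two residues modulo $4$ within the parity class fixed by $\sigma$. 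All four residues modulo $4$ therefore occur exactly once. In other words, $(\sigma,\theta)$ is recovered from $s\bmod 4$, and in fact from $s\bmod 4$ together with any known multiple of $4$ added to it.

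For the inductive step, suppose two sign vectors $(\sigma,\theta)$ and $(\sigma',\theta')$ give congruent sums modulo $4^r$. I show by induction on $m$ that $\sigma(i)=\sigma'(i)$ and $\theta(i)=\theta'(i)$ for all $i\le m$.
\begin{itemize}
\item Assume agreement for $i<m$. Reduce the congruence modulo $4^{m}$.
\item The terms with $i<m$ cancel. The terms with $i>m$ vanish modulo $4^m$.
\item What remains is $4^{m-1}s_m\equiv 4^{m-1}s'_m\pmod{4^m}$, that is, $s_m\equiv s'_m\pmod 4$.
\item By the local fact, this gives $\sigma(m)=\sigma'(m)$ and $\theta(m)=\theta'(m)$.
\end{itemize}
The base case $m=1$ is the same computation with an empty prefix. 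No carries cause trouble, because the lower terms cancel exactly rather than merely modulo a power.

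The only real point is the local fact. The main thing to check there is that the hypothesis ``distinct residues modulo four with the same parity'' forces the difference to be exactly $2\bmod 4$. Everything else is the counting argument: injectivity on a set of size $4^r$ into $\Z/4^r$ gives bijectivity, hence existence and uniqueness of the representation for every $N\in[0,4^r-1]$.
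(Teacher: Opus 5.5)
Your proof is correct and takes essentially the paper's approach: you determine $(\sigma(i),\theta(i))$ one index at a time by reducing modulo $4,4^2,\dots$, and then get existence by counting $4^r$ sign vectors against $4^r$ residues. You also make explicit a local fact the paper leaves implicit, namely that the four values $x_i^{\sigma}+y_i^{\theta}$ form a complete residue system modulo $4$, because $\sigma$ fixes the parity and $y_i^+-y_i^-\equiv 2 \pmod 4$.
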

\begin{proof}
Indeed, compare two sums of the stated form.
Reduction modulo four uniquely determines $x_1$ and $y_1$.
Next, reduction modulo $4^2$ uniquely determines $x_2$ and $y_2$, and so on.
The number of such sums equals the number of residue classes modulo $4^r$,
which proves the claim.
\end{proof}

\begin{lem}
\label{frac-seq2} Let $(b_1,\dots,b_r)$ be a Boolean vector, let $Y$ be an integer, and let $(C_1,C\prime_1),\dots,(C_r,C\prime_r)$ be pairs of columns of order $n\ge1$,
where each $C_i$ is a column of a Squiggle of multiplicity $2r-1$ and each $C_i\prime$ is a column of a Cross of multiplicity $2r-1$.

Then there exist cells $F_i\in C_i$ and $F_i\prime\in C_i\prime$ such that $\row_2F_i=b_i$ for every $i$, and
the following equality holds:
 \begin{equation}\label{sum-eq}
  \sum_{j=1}^{r} 4^{j-1} (\Row F_j+\Row F_j') =Y \mod 4^{nr}.
 \end{equation}
\end{lem}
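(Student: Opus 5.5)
The proof goes by induction on the order $n$, following the scheme of Lemma \ref{frac-seq}, with Lemma \ref{arif2} replacing Lemma \ref{arif1}. Both the Squiggle and the Cross have multiplicity $2r-1$, so a cell of order $n+1$ has side $2^{-2r}=4^{-r}$ times that of its parent. For either curve this gives the recursion
$$\Row F=\row F+4^r\Row\pr F,$$
which is \eqref{recur} for the Squiggle and the analogue of Lemma \ref{dig-pres} for the Cross. This is why the modulus is $4^{nr}$: each level contributes one factor $4^r$.

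\emph{Base case $n=1$.} Fix $i$.

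For the Cross column $C_i'$, Lemma \ref{intersect-vert} (at level one) shows that it contains the two cells with rows $a$ and $a^*=2^{2r}-1-a$. These two rows have opposite parity, so they can serve as the pair $x_i^{\pm}$ of Lemma \ref{arif2}.

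For the Squiggle column $C_i$, Lemma \ref{stolb4} shows that it contains four cells with distinct quaternary ordinates. Exactly two of them have $\row_2=b_i$. Their rows have the same parity and distinct residues modulo four, so they can serve as the pair $y_i^{\pm}$.

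Lemma \ref{arif2} now gives a choice of signs with $\sum_j 4^{j-1}(\Row F_j+\Row F_j')\equiv Y \bmod 4^r$. This is \eqref{sum-eq} for $n=1$.

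\emph{Inductive step, from $n$ to $n+1$.} Let the columns $C_i,C_i'$ have order $n+1$.

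First I choose local positions $\row F_i$ and $\row F_i'$ inside the parent cells. I apply Lemma \ref{arif2} with the quartet furnished by Lemma \ref{8-spectr} for each Squiggle column (restricted to row parity $b_i$), and with the complementary pair $\{a,a^*\}$ for each Cross column. This gives
$$\sum_j 4^{j-1}(\row F_j+\row F_j')\equiv Y \bmod 4^r.$$
Then $Y_1=4^{-r}\bigl(Y-\sum_j 4^{j-1}(\row F_j+\row F_j')\bigr)$ is an integer. Next I apply the induction hypothesis to the parent columns $\pr C_i,\pr C_i'$ with target $Y_1$. The prescribed row parities for the Squiggle parents are $\row_2\pr F_i$. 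Since the parity of a column is fixed, prescribing $\row_2$ is the same as prescribing the cell parity $\odd\pr F_i$, which by Lemma \ref{frac-zag-odd} is the same as prescribing the diagonal type. This yields parent cells $G_i,G_i'$ satisfying the congruence modulo $4^{nr}$.

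Inside each $G_i$ I take the subcell $F_i''$ in column $C_i$ with $\row F_i''=\row F_i$, and inside each $G_i'$ the subcell with the chosen local row. The latter always exists, because every Cross cell meeting the column contains subcells at both $a$ and $a^*$. Substituting into the recursion and multiplying the parent congruence by $4^r$ gives \eqref{sum-eq} modulo $4^{(n+1)r}$.

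\emph{Main obstacle.} The delicate point is the Squiggle: the chosen local row must be realizable inside a parent cell of the prescribed type. A given local row occurs in the column only inside ascending blocks or only inside descending ones. This is the content of Lemma \ref{zag-tip} together with Lemma \ref{blok-stolb}, now with blocks of multiplicity $2r-1$. So the choice of $\row F_i$ forces the parity of $\pr F_i$, and this is exactly the parity I feed to the induction hypothesis.

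I would check two things carefully. First, that Lemma \ref{odd=even} guarantees both parent types occur in $\pr C_i$, so that the quartet of Lemma \ref{8-spectr} is genuinely available. Second, that the parity $\row_2 F_i=b_i$ is not disturbed by passing from $F_i$ to $F_i''$. This holds because $4^r$ is even, so the parity of $\Row$ equals the parity of the local row.
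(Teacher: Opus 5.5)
Your proposal is correct and follows essentially the same route as the paper's proof. Both argue by induction on the order, applying Lemma~\ref{arif2} to the complementary Cross pair $\{a,a^*\}$ and to the two same-parity Squiggle cells, both in the base case and for the local rows; they then invoke the induction hypothesis on the parent columns with target $Y_1$ and prescribed parities $\row_2\pr F_i$, and move the chosen local rows into the new parent cells via Lemma~\ref{zag-tip}. You also make explicit two points the paper leaves implicit: that every Cross parent contains subcells at both $a$ and $a^*$, and that $\row_2$ is preserved under the replacement $F_i\to F_i''$.
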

 \begin{proof}
We prove the lemma by induction on the order of the columns.

First, let $n=1$.
Since each $C_i\prime$ contains cells at every vertical parity and
$C_i$ contains pairs of cells with vertical parity $b_i$ and distinct quaternary residues,
Lemma \ref{arif2} allows us to choose cells $F_i\in C_i$ and
$F_i\prime\in C_i\prime$ such that
\begin{equation}\label{seqq1}
 Y= \sum_{i=1}^{r} 4^{i-1}( \Row F_i+ \Row F_i') \mod 4^{r}
 \end{equation}
This establishes the base case.

Assume the claim holds for columns of order $n\ge1$, and consider columns $C_i$ of order $n+1$.

The same argument used for first-order columns lets us choose
cells $F_i,F_i\prime$ in the order-$(n+1)$ columns so that the equality
holds:
\begin{equation}\label{seq2}
 Y= \sum_{i=1}^{r} 4^{i-1}( \row F_i+ \row F_i') \mod 4^{r}
 \end{equation}

Set $Y_1=4^{-r}\left(Y-\sum_{i=1}^{r}4^{i-1}(\row F_i+\row F_i\prime)\right)$.
By the induction hypothesis,
there are cells $G_i\in\pr C_i$ and $G_i\prime\in\pr C_i\prime$ with $\odd G_i=\odd\pr F_i$ such that
 \begin{equation}\label{step-n1}
 \sum_{i=1}^{r} 4^{i-1} ( \Row G_i+\Row G_i') =Y_1 \mod 4^{nr}
 \end{equation}
Since $G_i$ has the same type as $\pr F_i$, it contains a subcell $F_i\prime\prime\subset G_i$ of order $n+1$ for which
$C(F_i\prime\prime)=C(F_i)$ and $\row F_i=\row F_i\prime\prime$. By Lemma \ref{zag-tip},
$\Row F_i\prime\prime=\Row F_i\bmod 4^r$, so the equality in \eqref{step-n1} also holds for the cells $F_i\prime\prime$.

We may therefore assume, without loss of generality, that $G_i=\pr F_i$ for each $i$.
Replacing $G_i$ by $\pr F_i$ in \eqref{step-n1} and multiplying by $4^r$ gives
$$
\sum_{i=1}^{r} 4^{i-1} ( 4^{r}(\Row \pr F_i+\Row \pr F_i'))
=
 Y-\sum_{i=1}^{r} 4^{i-1}(\row F_i+\row F_i') \mod 4^{(n+1)(r)}
$$
Moving the sum from the right-hand side to the left gives the required equality for columns of order $n+1$.
\end{proof}

The following theorem follows from Lemma \ref{frac-seq2} in the same way that Theorem \ref{th-razvert}
follows from Lemma \ref{frac-seq}.

\begin{theorem}\label{th-razvert2}
For any $x_0,\dots,x_{2r-1}$ in the unit interval and any $Y\in[0,1]$,
there exist $y_0,\dots,y_{2r-1}$ such that the following conditions hold:
\begin{enumerate}
\item for every $i\in[0,2r-1]$, the point $(x_i,y_i)$ lies on the Squiggle $\w_{2r-1}$ inscribed in $[0,1]\times[0,2]$ when $r$ is odd, and on the Cross $\x_{2r-1}$ when $r$ is even;
 \item $\sum_{j=1}^{r} 4^{j-1} (y_{2j-1}+y_{2j}) =Y \mod 1$
 \end{enumerate}
\end{theorem}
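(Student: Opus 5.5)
The proof will follow the argument for Theorem \ref{th-razvert}, with Lemma \ref{frac-seq2} in place of Lemma \ref{frac-seq}. The statement as written has slips: the points are $y_0,\dots,y_{2r-1}$ but the sum uses $y_{2j-1},y_{2j}$, and the Squiggle/Cross alternation is phrased via the parity of $r$. I will read it through the formula for $Y$ in the introduction: $y_{2j-1}$ is attached to a Squiggle $\w_{2r-1}$ and $y_{2j}$ to a Cross $\x_{2r-1}$, both with multiplicity $2r-1$. The modulus is $1$, so the Squiggle will be normalized so that its ordinates give the correct residues.

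The first step is a reduction to dyadic values. The set of admissible $Y$ is closed because the Squiggle and the Cross are compact and the constraint is continuous modulo $1$. It therefore suffices to treat $Y$ with $4^{nr}Y\in\Z$ for some $n$, since these are dense in $[0,1]$.

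For such $Y$ and each index $i$, I will choose an order-$n$ column of the appropriate set whose base contains $x_i$. For the Squiggle I take $C_j$, whose cells have side $2^{-2rn}=4^{-rn}$, and for the Cross I take $C_j'$, the cells of the $n$-fold $2^{2r}$-diagonalization. I then apply Lemma \ref{frac-seq2} to the integer $4^{nr}Y$, with any Boolean vector $\beta$, for instance $\beta=0$. This gives cells $F_j\in C_j$ and $F_j'\in C_j'$ with
$$\sum_{j=1}^{r}4^{j-1}(\Row F_j+\Row F_j')=4^{nr}Y \bmod 4^{nr}.$$
Let $y_{2j-1}(n)$ and $y_{2j}(n)$ be the ordinates of the lower-left corners. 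Since these ordinates equal $4^{-nr}\Row F_j$ and $4^{-nr}\Row F_j'$, dividing by $4^{nr}$ yields condition~2 exactly for the approximants.

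The abscissas of these corners differ from $x_i$ by at most the cell side $4^{-nr}$. The corners also lie within one cell diameter of the Squiggle and of the Cross, because every cell meets its set: Zigzag cells contain their extreme points, and Cross cells contain their diagonals by Lemma \ref{prop-crest}. I then pass to a subsequence $n_k$ along which all ordinates converge. In the limit the points $(x_i,y_i)$ lie on the closed sets, and condition~2 survives because it is a closed condition on the circle $\R/\Z$.

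The main obstacle is checking that Lemma \ref{frac-seq2} applies with these normalizations. Two points need care. First, the ordinate of a Squiggle cell is $\Row F\cdot 4^{-nr}$ only in the unit normalization. The Squiggle lives in $[0,1]\times[0,2]$, and an order-$n$ Squiggle cell has side $2^{-(2r)n}$, which matches $4^{-nr}$ since $r_{\mathrm{mult}}+1=2r$, so this is consistent. Second, for the Cross I must use Lemma \ref{intersect-vert} to confirm that each Cross column contains cells of every vertical parity and that its rows are measured the same way; this is what the base step of Lemma \ref{frac-seq2} uses. If an index shift appears, I will absorb it by relabelling.
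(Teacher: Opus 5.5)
Your proposal is correct and follows the route the paper intends; the paper only remarks that the theorem follows from Lemma~\ref{frac-seq2} in the same way that Theorem~\ref{th-razvert} follows from Lemma~\ref{frac-seq}, and your steps are exactly that argument (reduce to $Y$ with $4^{nr}Y\in\Z$, apply Lemma~\ref{frac-seq2} to the order-$n$ columns over the $x_i$ with odd indices on $\w_{2r-1}$ and even indices on the Cross, divide by $4^{nr}$, then pass to a convergent subsequence). Your reading of the index slips is sound, and your justification of the limit step (cells of side $4^{-nr}$ whose horizontal projections contain $x_i$ and which lie in the closed sets) is more explicit than the paper's own argument for Theorem~\ref{th-razvert}.
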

\begin{proof}
The proof is left to the reader.
\end{proof}

Let $\tau_{2r-1}$ denote the parametrization of the $(2r-1)$-fold Cross by the unit interval constructed in the proof of Theorem \ref{th-serp}.
Let $\zeta_{2r-1}^r$ and $\tau_{2r-1}^r$ denote the $r$-fold Cartesian powers of the unit-interval parametrizations of the $(2r-1)$-fold Squiggle and
Cross, respectively.
Define $\chi_r=\zeta_{2r-1}^r\times\tau_{2r-1}^r$ as the product of these maps.
Then $\chi_r$ is a H\"older-continuous map with exponent $\frac{2r}{2r+1}$ from the cube $[0,1]^{2r}$
into $\C^{2r}$.

Define $\theta_r\colon\C^{2r}\to\R^{r+1}$ by the formula
\begin{equation*}
\theta_r(z_1,\dots,z_{2r})=(\re z_1,\dots,\re z_{2r},(\sum_{k=1}^{r} 4^{k-1}\im (z_{2k-1}+z_{2k})) \mod 1)
\end{equation*}

\begin{theorem}\label{th-xz} The composition of $\chi_r$ with $\theta_r$ maps $[0,1]^{2r}$ into $[0,1]^{2r}\times\R$. Composing this map with the covering
$\pi\colon[0,1]^{2r}\times\R\to[0,1]^{2r}\times\R/\Z$ gives a surjective map with H\"older exponent $\frac{2r}{2r+1}$.
 \end{theorem}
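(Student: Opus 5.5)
The proof will follow the one for the analogous theorem on the Cross, in three steps: first the range statement, then the H\"older estimate, then surjectivity via Theorem \ref{th-razvert2}.

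\emph{Range.} Both $\zeta_{2r-1}$ and $\tau_{2r-1}$ take values in sets whose projection onto the real axis is $[0,1]$. The Squiggle is inscribed in $[0,1]\times[0,2]$ and the Cross in the unit square. Hence every real coordinate of $\theta_r\circ\chi_r$ lies in $[0,1]$. The last coordinate is a finite real combination of imaginary parts, so before reduction modulo one it lies in $\R$. This gives the inclusion $\theta_r\circ\chi_r([0,1]^{2r})\subset[0,1]^{2r}\times\R$.

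\emph{H\"older estimate.} Lemma \ref{zag-coef} gives H\"older exponent $\frac{2r}{2r+1}=\frac{(2r-1)+1}{(2r-1)+2}$ for $\zeta_{2r-1}$. Theorem \ref{th-serp} gives the same exponent for $\tau_{2r-1}$. In the $l_\infty$ metric, a Cartesian product of maps with a common exponent has that exponent, with constant the maximum of the two constants. Hence $\chi_r$ has exponent $\frac{2r}{2r+1}$. The map $\theta_r$ before reduction is linear, hence Lipschitz, with constant at most $2\sum_{k=1}^{r}4^{k-1}=\frac{2(4^r-1)}{3}$ in the $l_\infty$ norm. The covering $\pi$ does not increase distances. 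So $\pi\circ\theta_r\circ\chi_r$ has exponent $\frac{2r}{2r+1}$, with constant equal to the product of these constants.

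\emph{Surjectivity.} This is the substantive part. Take any point $(x_1,\dots,x_{2r},Y)$ with $x_i\in[0,1]$ and $Y\in[0,1)$. By Theorem \ref{th-razvert2}, there are $y_i$ such that each $(x_i,y_i)$ lies on the curve of the type assigned to slot $i$, and $\sum_{k=1}^r 4^{k-1}(y_{2k-1}+y_{2k})\equiv Y \pmod 1$. Here odd slots carry Squiggles and even slots carry Crosses, matching the factors of $\chi_r$; I will reindex the theorem's conclusion to fit this arrangement. Since $\zeta_{2r-1}$ maps onto the Squiggle and $\tau_{2r-1}$ maps onto the Cross, there exist parameters $t_i$ with $\zeta_{2r-1}(t_i)=x_i+\i y_i$ or $\tau_{2r-1}(t_i)=x_i+\i y_i$, according to the slot. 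Then $\pi\circ\theta_r\circ\chi_r(t_1,\dots,t_{2r})=(x_1,\dots,x_{2r},Y)$.

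\emph{Expected obstacle.} The delicate point is the surjectivity step, specifically matching the parity and column hypotheses. Theorem \ref{th-razvert2} rests on Lemma \ref{frac-seq2}, which needs, in each pair, a Squiggle column supplying two cells of prescribed vertical parity with distinct residues modulo four, and a Cross column supplying both parities. Lemma \ref{8-spectr} and Lemma \ref{intersect-vert} guarantee these for all orders $n$. With them, the passage to limits along a subsequence $n_k$, using compactness exactly as in Theorem \ref{th-razvert}, produces the required $y_i$. I would also check that the extreme value $Y=1$ is harmless, since it is $0$ modulo one.
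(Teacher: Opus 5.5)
Your proposal is correct and follows the paper's own proof: surjectivity comes from Theorem~\ref{th-razvert2}, read with odd slots as Squiggles and even slots as Crosses, and the exponent $\frac{2r}{2r+1}$ comes from the factor parametrizations composed with the Lipschitz map $\theta_r$ and the non-expanding covering $\pi$. You are in fact slightly more complete than the paper, which cites only Theorem~\ref{th-serp} for $\chi_r$ and leaves two points implicit: the Squiggle factor (your Lemma~\ref{zag-coef}), and lifting the points $(x_i,y_i)$ to parameters via the surjectivity of $\zeta_{2r-1}$ and $\tau_{2r-1}$.
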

\begin{proof}
Theorem \ref{th-razvert2} implies that the composition $\pi\circ\theta_r\circ\chi_r$ is surjective.
Theorem \ref{th-serp} implies that $\chi_r$ is H\"older-continuous with exponent $\frac{2r}{2r+1}$.
Since the Stong and covering maps are Lipschitz, the composition has the same H\"older exponent.

 \end{proof}

\section{Conclusion.}

In addition to proofs, the article gives recurrence relations for chain codes that make it possible to implement
the fractal curves discussed here.
The author thanks I. V. Pashenkov for programming the Zigzag and the Cross.
Computer calculations provided initial evidence that the maps from a square onto a three-dimensional cylinder are surjective.
A rigorous theoretical proof of surjectivity required considerable effort and ultimately led to the writing of this article.

\end{document}